\renewcommand{\r}{\mathbf{r}}
\newcommand{\C}{\mathbb{C}}
\newtheorem{Th}{Theorem}[section]
\newtheorem{Lemma}[Th]{Lemma}
\newtheorem{Cor}[Th]{Corollary}
\newtheorem{Prop}[Th]{Proposition}
\newcommand{\Prob}{\mathbb{P}}
\renewcommand{\O}{\mathcal{O}}
\renewcommand{\o}{\mathbf{o}}
\newcommand{\T}{\mathcal{T}}
\newcommand{\N}{\mathbb{N}}
\newcommand{\Z}{\mathbb{Z}}
\numberwithin{equation}{section}
\begin{document}

\title[Phase Transitions for Random Walk Asymptotics on Free Products]{Phase Transitions for Random Walk Asymptotics on Free Products of Groups}

\address{Institut f\"ur mathematische Strukturtheorie (Math. C), University of Technology Graz, Steyrergasse
  30, A-8010 Graz, Austria}

\email{candellero@tugraz.at, gilch@TUGraz.at}

\urladdr{http://www.math.tugraz.at/$\sim$candellero/, http://www.math.tugraz.at/$\sim$gilch/}
\date{\today}
\subjclass[2000]{Primary 60J10, 60F99; Secondary 58K55} 
\keywords{Random Walks, Free Products, Return Probabilities,
  Asymptotic Behaviour, Lattices}

\maketitle

\centerline{\scshape Elisabetta Candellero and Lorenz A. Gilch}
\medskip
{\footnotesize
 \centerline{Graz University of Technology, Graz, Austria}}

\begin{abstract}
Suppose we are given finitely generated groups $\Gamma_1,\dots,\Gamma_m$
equipped with irreducible random walks. Thereby we assume that the expansions of the corresponding Green functions
at their radii of convergence contain only logarithmic or algebraic terms as
singular terms up to sufficiently large order (except for some degenerate cases). We consider transient random walks on the free product \mbox{$\Gamma_1 \ast
\ldots \ast\Gamma_m$} and give a complete classification of the possible
asymptotic behaviour of the corresponding $n$-step return probabilities. 
They either inherit a law of the form  $\varrho^{n\delta} n^{-\lambda_i} \log^{\kappa_i}n$ from one of
the free factors $\Gamma_i$ or obey a $\varrho^{n\delta} n^{-3/2}$-law, where $\varrho<1$ is
the corresponding spectral radius and $\delta$ is the period of the random walk. In addition, we determine the full range of
the asymptotic behaviour in the case of nearest neighbour random walks on free
products of the form $\Z^{d_1}\ast \ldots \ast \Z^{d_m}$. Moreover, we
characterize the possible phase transitions of the non-exponential types
$n^{-\lambda_i}\log^{\kappa_i}n$ in the case $\Gamma_1\ast\Gamma_2$.
\end{abstract}

\section{Introduction}

In this article we investigate transient random walks on free products $\Gamma_1\ast\ldots \ast \Gamma_m$, where $m\geq 2$ and
$\Gamma_1,\dots,\Gamma_m$ are finitely generated groups. These random
walks arise from convex combinations of probability measures
on the factors $\Gamma_1,\dots,\Gamma_m$. Our aim is to compute the asymptotic
behaviour of the $n$-step return probabilities on the free product. In a general setting, one has
a typical asymptotic behaviour of the form
$\mu^{(n)}(x) \sim C_x\,\varrho^{n\delta} n^{-\lambda}$, where $\mu^{(n)}(x)$ is the
probability of being at $x$ at time $n$, $\varrho$ is the spectral radius, $\delta$ the period of the random walk, and $C_x$ some constant depending on $x$. If $e$ is the group identity and starting point of the random walk, then $\mu^{(n)}(e)$ is called the $n$-step return probability. 
Gerl \cite{gerl2} conjectured that
the $n$-step return probabilities of two symmetric measures on a group
satisfying such a limit law have the
same $n^{-\lambda}$, that is, $\lambda$ is a group invariant.
Cartwright \cite{cartwright} came to the astonishing result that for random walks on
$\Z^d\ast \Z^d$ with $d\geq 5$ there are at least two possible types of
asymptotic behaviour, namely $n^{-3/2}$ and $n^{-d/2}$. In relation with his joint work with Chatterji and Pittet \cite{chatterji-pittet-saloffcoste}, L.\,Saloff-Coste asked whether the range of different asymptotic behaviours can still be wider than in the case considered by Cartwright. In this article we will pick up this question by investigating more general laws of the form $C\,\varrho^{n\delta}\,
 n^{-\lambda}\,\log^{\kappa} n$. In this case, we speak of the factor
 $n^{-\lambda}\,\log^{\kappa} n$ as the non-exponential type of the return probabilities.
\par
The starting point for the present investigation was Woess \cite[Section
17.B]{woess}, where the result of Cartwright \cite{cartwright} is explained that simple random walk on $\Z^d\ast \Z^d$ for
$d\geq 5$ satisfies a $n^{-d/2}$-law. In this article we will prove the following more general theorem:
\begin{Th}\label{thm:1.1}
Let $m\in\N$ with $m\geq 2$ and $d_1,\dots,d_m\in\N$. For each
$i\in\{1,\dots,m\}$, consider on the lattice $\Z^{d_i}$ a probability measure $\mu_i$ with $\mathrm{supp}(\mu_i)=\{\pm e_j^{(i)} \mid 1\leq j\leq d_i\}$, where  $e_j^{(i)}$ is the $j$-th unit vector in $\Z^{d_i}$. For any $\alpha_1,\dots,\alpha_m>0$ with $\sum_{i=1}^m\alpha_i=1$, let $\mu:=\sum_{i=1}^m \alpha_i\mu_i$ govern a 
(irreducible) random walk on the free product $\Z^{d_1}\ast\dots \ast\Z^{d_m}$
starting at $e$, where $e$ denotes the identity of the free product. 
\par
Then the return probabilities $\mu^{(2n)}(e)$ behave asymptotically either like $C\cdot \varrho^{2n} \cdot n^{-d_i/2}$ for  $i\in\{1,\dots,m\}$ or like  $C\cdot \varrho^{2n} \cdot n^{-3/2}$ for some constant $C=C_\mu$ depending on $\mu$. 
Moreover, if all exponents $d_i$ are different and $\min\{d_1,\dots,d_m\}\geq 5$ then exactly $m+1$ different asymptotic behaviours may occur by choosing the random walk adequately.
\end{Th}
We will consider more general free products which go beyond free products of lattices. For this purpose, we will present a new approach in order to be able to deal with irreducible
random walks on any free product of the form $\Gamma_1 \ast \ldots \ast
\Gamma_m$. At this point we have to make the following assumption: if the Green function of the random walk on the free factor $\Gamma_i$ is differentiable at its radius of convergence $\r_i$, then the Green function is assumed to have a singular expansion (i.e. in a neighbourhood of $\r_i$) containing only singular terms of the form  $(\r_i-z)^q \log^k(\r_i-z)$ with $q\in (1,\infty )$ and $k\in\N_0$ up to sufficiently large order. The latter property is satisfied for several
well-known groups as e.g. $\Z^d$ or $\Z^d\times (\Z/n\Z)$ with $d\geq 5$ and $n\geq 2 $. If, however, the Green function of the random walk on the free factor $\Gamma_i$ is \textit{not} differentiable at $\r_i$, we do \textit{not} need any assumption on the expansions.
\par
If the asymptotic $n$-step return probabilities of the
random walk on $\Gamma_i$ satisfy  a law of the form $\r_i^{-n\delta}n^{-\lambda_i}
\log^{\kappa_i}n$ then we
will show that only one of the following non-exponential types may occur for
the random walk on the free product: either $n^{-\lambda_i}\log^{\kappa_i}n$ for some
$i\in\{1,\dots,m\}$, or $n^{-3/2}$. That is, we may have
up to $m+1$ different types of asymptotic behaviour for (symmetric or non-symmetric) random
walks, and Theorem \ref{thm:1.1} shows that one can have indeed exactly $m+1$
different behaviours. Moreover, for the case $\Gamma_1\ast \Gamma_2$ equipped
with the probability measure $\mu=\alpha_1 \mu_1 + (1-\alpha_1)\mu_2$, where
$\mu_1$ and $\mu_2$ are probability measures on $\Gamma_1$ and $\Gamma_2$ and
$\alpha_1\in (0,1)$, we characterize the phase transitions of the
non-exponential types in terms of $\alpha_1$. We split the $(0,1)$-interval,
i.e. the interval of possible values for $\alpha_1$, in up to three distinct subintervals such that, in each of them, we have exactly one of the non-exponential types $n^{-\lambda_1}\log^{\kappa_1}(n)$,
$n^{-\lambda_2}\log^{\kappa_2}n$ or $n^{-3/2}$.
\par
Let us briefly recall some results regarding the asymptotic behaviour of return
probabilities. Work in this direction has been done since the 1970's by Gerl,
Sawyer, Woess, Cartwright, Soardi and Lalley, see e.g. 
\cite{gerl-woess}, \cite{sawyer78},  \cite{woess3}, \cite{cartwright-soardi},
\cite{lalley93}. 
Sawyer \cite{sawyer78} applies Fourier analysis to isotropic
random walks on trees (free groups), which uses in a crucial way methods from
complex analysis.
For finite range random walks on free groups, it is known from
\cite{gerl-woess} and \cite{lalley93} that the $n$-step return probabilities
behave asymptotically like $C\varrho^{n}n^{-3/2}$, where $\varrho<1$.
In \cite{gerl2}, \cite{woess82} and \cite{woess3} free products
of finite groups are considered, which have a very tree-like structure and where random walks
obey a $n^{-3/2}$-law. In the more general case
of free products of arbitrary groups the interior structure of each free factor
is more complicated. Woess \cite{woess3}, Cartwright and Soardi
\cite{cartwright-soardi}, Voiculescu \cite{voiculescu} and McLaughlin \cite{mclaughlin} found
independently a method to determine the Green function of a free product in
terms of functional equations of the Green functions defined on the free factors.
We will study these equations carefully, in order to obtain  -- with the help of the
well-known \textit{method of Darboux} -- the asymptotic behaviour of
the power series' coefficients, which are the sought return probabilities. We refer also to the survey of Woess \cite{woess03}, which
outlines the use of generating functions. 
More recently, random walks on free products have also been studied by Mairesse
and Math\'eus \cite{mairesse1} and Gilch \cite{gilch:07}, \cite{gilch:09}, regarding boundary
theory, entropy and rate of escape. For more details and references we refer to
Woess \cite{woess}, which serves also as reference text for our work.
\par
The structure of this paper is as follows: in Section \ref{sec:RW} we introduce some
basic facts and notations. In Section \ref{sec:mainresults} we prove our main
result for the case $\Gamma_1\ast \Gamma_2$, while in
\mbox{Sections \ref{sec:psi=0}} and \ref{sec:lowdim} we are completing the list of degenerate cases, which, in particular, may occur if the Green functions of the random walks on the single factors are non-differentiable at their radii of convergence.
In Section \ref{sec:higher-order} we are proving inductively the proposed asymptotic behaviour
for multi-factor free products of the form $\Gamma_1\ast \ldots \ast\Gamma_m$
with $m\geq 3$. Section \ref{sec:exmaples} discusses some examples. This
includes the case of free
products of the form $\Z^{d_1}\ast \ldots \ast \Z^{d_m}$, where we give a full
classification of the asymptotic behaviour of the return probabilities, which proves Theorem \ref{thm:1.1}.
For $\Gamma_1\ast\Gamma_2$, we give in Section \ref{sec:phase-transition} a
full characterization of the possible phase transition behaviour of the
non-exponential types of the return probabilities in terms of the weight $\alpha_1 $ of the
probability measure given on $\Gamma_1$.
Finally, \mbox{Section \ref{sec:remarks}} gives some concluding remarks about higher asymptotic order terms.

\section{Random Walks on Free Products}\label{sec:RW}

Let $m\in\N$ with $m\geq 2$. Suppose we are given finitely generated groups
$\Gamma_1,\dots,\Gamma_m$, and we denote by $e_i$ the identity of $\Gamma_i$. We consider the \textit{free product} $\Gamma:=\Gamma_1\ast \ldots \ast
\Gamma_m$, which consists of all
finite words of the form
\begin{equation}\label{nr}
x_1x_2 \dots x_n,
\end{equation}
where $x_1,\dots,x_n\in \bigcup_{i=1}^m \Gamma_i\setminus \{e_i\}$
and two consecutive letters are not from the same free factor $\Gamma_i$. In the case $\Gamma_i=\Gamma_j$ we may think that the elements
of $\Gamma_i$ and $\Gamma_j$ have different colours to distinguish their origin. Observe that each factor $\Gamma_i$ can be naturally embedded into $\Gamma$, and therefore $e_i\in\Gamma_i$ can be identified with the empty word $e\in\Gamma$. The free product is a group with $e$ as
identity: the product of two elements is given by
concatenation followed by iterated contractions and cancellations of redundant terms in the middle, in order to
obtain the requested form (\ref{nr}). For example, if $a,b\in \Gamma_1\setminus \{e_1\}$ and $c\in \Gamma_2\setminus \{e_2\}$, such that $c^2\neq e $, then $(aca^{-1})(aca)=ac^2a $. For further details about free products see e.g. Lyndon and Schupp \cite{lyndon-schupp}. 
\par
We recall and introduce some notation: for any function
$f:D\subseteq \mathbb{C}\to\mathbb{C}$ with \mbox{$f(z_0)=0$} for $z_0\in D$, $0<q\in\mathbb{R}$ and $k\in\N_0$, we use the notation $f(z)=\mathbf{o}\bigr((z_0-z)^{q} \log^{k} (z_0-z)\bigl)$,
\mbox{$f(z) = \O_c\bigr((z_0-z)^{q} \log^{k} (z_0-z)\bigl)$}
or $f(z)  =  \O\bigr((z_0-z)^{q} \log^{k}
(z_0-z)\bigl)$,
if for $z\to z_0$ the function $f(z)$ divided by $(z_0-z)^{q} \log^{k} (z_0-z)$ tends to zero, has
a non-zero finite limit or is bounded nearby $z_0$ (that is, the quotient has a finite limes superior) respectively. Furthermore, we
write $(z_0-z)^{q_1} \log^{k_1}(z_0-z) \preceq  (z_0-z)^{q_2} \log^{k_2}(z_0-z)$ if and only if
$(z_0-z)^{q_2} \log^{k_2}(z_0-z) = \O\bigl((z_0-z)^{q_1}
\log^{k_1}(z_0-z)\bigr)$. The value $z_0$ will be obvious from the context.
\par
Suppose we are given probability measures
$\mu_i$ on $\Gamma_i$ with $\langle\mathrm{supp}(\mu_i)\rangle=\Gamma_i$ for
each \mbox{$i\in\{1,\dots,m\}$.} These measures $\mu_i$ govern random walks on
$\Gamma_i$, that is, the single step transition probabilities are given by
$p_i(x_i,y_i)=\mu_i(x_i^{-1}y_i)$ for all $x_i,y_i\in\Gamma_i$. (Let us remark that, in the case of $\Gamma_i=\Z^{d_i}$ with $d_i\in\N$, we speak of a nearest neighbour random walk if $\mathrm{supp}(\mu_i)=\{\pm e_j \mid 1\leq j\leq d_i \}$, where $e_j$ is the $j$-th unit vector in $\Z^{d_i}$.)
We lift now $\mu_i$ to a probability measure $\bar \mu_i$ on $\Gamma$ by defining $\bar \mu_i(x):=\mu_i(x)$ if $x\in\Gamma_i$; otherwise we set $\bar \mu_i(x):=0$.
Let $\alpha_1,\dots,\alpha_m>0$ such that $\sum_{i=1}^m \alpha_i=1$. Consider now the probability measure $\mu:=\sum_{i=1}^m \alpha_i\bar \mu_i$ on the free product $\Gamma$, which arises as a convex combination of the $\bar\mu_i$'s. Then the single step transition probabilities on $\Gamma$ given by $p(x,y):=\mu(x^{-1}y)$ for $x,y\in\Gamma$ define a random walk on $\Gamma$, which is an irreducible Markov chain. We denote by $\mu^{(n)}_1, \dots, \mu^{(n)}_m$ and $\mu^{(n)}$ the $n$-fold convolution
power of $\mu_1, \dots, \mu_m$ and $\mu$, that is, the distribution after $n$ steps
with start at the identity. For
$z\in\mathbb{C}$, the associated \textit{Green functions}
of the random walks on $\Gamma_i$ and $\Gamma$ are given by
\[
G_i(z) :=\sum_{n=0}^{\infty}\mu_i^{(n)}(e_i) z^n\quad \textrm{ and } \quad
G(z)  := \sum_{n=0}^{\infty}\mu^{(n)}(e) z^n.
\]
The corresponding radii of convergence are denoted by $\r_i$ and
$\r$ respectively, which are singularities according to Pringsheim's Theorem. Note that $\r>1$, since $\Gamma$ is non-amenable unless
\mbox{$\Gamma=(\Z/2\Z)\ast (\Z/2\Z)$} (see
e.g. \cite[Theorem 10.10, Corollary 12.5]{woess}; in the latter case the random walk on $\Gamma$ is recurrent). In the following we assume that $G_i(z)$ is exactly $d_i$-times differentiable at $z=\r_i$, where $d_i\in\N_0$. At this point we make the
\textit{basic assumption} that -- whenever $G_i'(\r_i)<\infty$ -- the expansions
of the Green functions $G_i(z)$ in a neighbourhood of $z=\r_i$ have the form
\begin{equation}\label{equ:expansion-assumption}
G_i(z)= \sum_{k=0}^{d_i} g_k^{(i)} (\r_i-z)^k + \sum_{(q,k)\in \mathcal{T}_i}
g^{(i)}_{(q,k)} (\r_i-z)^q \log^k(\r_i-z) + \O\bigl((\r_i-z)^{d_i+2}\bigr),
\end{equation}
where  $\mathcal{T}_i$ is a finite subset of $\{(q,k)\in\mathbb{R}\times \N_0
\mid d_i < q \leq d_i+2\}$.
In other words, the expansions contain only logarithmic and algebraic terms as singular terms up to order \mbox{$(\r_i-z)^{d_i+2}$}. As we will see, higher order terms are not necessary for  the computation of the non-exponential type of the $n$-step return probabilities of the random walk on $\Gamma$. In the following we want to motivate this assumption on $G_i(z)$. This property for the expansion is satisfied in several well-known
cases: for example, the Green functions of nearest neighbour random walks on
lattices $\Z^d$ have such an expansion; see Proposition
\ref{prop:expansion-Gd}. With some effort, such an expansion can be deduced for $\Z^d\times (\Z/n\Z)$ via the same methods  used for $\Z^d$.
In particular, we will prove our main result by induction on the number $m$ of
free factors of $\Gamma$: we will see that the assumptions stated in
(\ref{equ:expansion-assumption}) are stable under free products (except for
some degenerate cases), that is, $G(z)$ has again a similar expansion if
$G'(\r)<\infty$ holds. 
If the Green function $G_i(z)$ has the form (\ref{equ:expansion-assumption}) then the well-known
\textit{method of Darboux} yields that the $n$-step return
probabilities of the random walk on $\Gamma_i$ (governed by $\mu_i$) behave asymptotically like the coefficients of the Taylor expansion of the leading
singular term in (\ref{equ:expansion-assumption}) in a neighbourhood of
$0$. Assume that $S_i(z):=(\r_i-z)^{q_i} \log^{k_i}(\r_i-z)$ is the
\textit{smallest} (or \textit{leading}) singular term in
(\ref{equ:expansion-assumption}) w.r.t. $\preceq$, that is, $q>q_i$ or $(q=q_i
\land k<k_i)$ for all $(q,k)\in\mathcal{T}_i\setminus \{ (q_i,k_i)\}$; then the
coefficients of its expansion in a neighbourhood of $0$ behave asymptotically
like the $n$-step return probabilities on $\Gamma_i$ (the proof of this fact is
completely analogous to the one of Theorem \ref{thm:Psi<0}). More precisely,
they behave like $\hat C_i \r_i^{-n\delta_i} n^{-\lambda_i}\log^{\kappa_i}(n)$, where \mbox{$\delta_i:=\gcd\bigl\lbrace
  n\in\N \mid \mu_i^{(n)}(e_i)>0\bigr\rbrace$} is the period of the random walk on $\Gamma_i$ and
\begin{equation}\label{equ:lambda-kappa}
\lambda_i:=q_i+1\ \textrm{  and }  \kappa_i:=
\begin{cases}
k_i,& \textrm{if } q_i\notin\N,\\
k_i-1 & \textrm{if } q_i\in\N ;
\end{cases}
\end{equation}
see e.g. Flajolet and Sedgewick \cite[Chapter VI.2]{flajolet07} for the asymptotic behaviour of the coefficients in the expansion of $(\r_i-z)^{q_i} \log^{k_i}(\r_i-z)$ in a neighbourhood of $0$.
Analogously, $\delta:=\gcd\bigl\lbrace n\in\N
\mid \mu^{(n)}(e)>0\bigr\rbrace=\gcd\{\delta_1,\dots,\delta_m\}$ is the period of the random walk on $\Gamma$. 
Note that the method of Darboux needs some differentiability assumptions at $z=\r_i$; therefore, we need the expansions of $G_i(z)$ up to terms of order $(\r_i-z)^{d_i+2}$.  For more details about Darboux's method we refer to the comments in the proof of Theorem \ref{thm:Psi<0}. 
We remark that another -- modern -- tool to
handle singular expansions as in (\ref{equ:expansion-assumption}) is
\textit{Singularity Analysis}, which was developed by Flajolet and Odlyzko
\cite{flajolet86}. However, in our context it turns out that the verification
of the specific requirements of singularity analysis is quite cumbersome as one
can also see in Lalley \cite{lalley2}. Let us also point out that, in the case $G_i'(\r_i)=\infty$, we do \textit{not} need any assumptions on the singularity type at $z=\r_i$.
\par
In the following we look at free products of the form $\Gamma_1\ast \Gamma_2$
different from $(\Z/2\Z)\ast (\Z/2\Z)$ (it is well-known that
random walks -- in our context -- on $(\Z/2\Z)\ast (\Z/2\Z)$ obey a $n^{-1/2}$-law). Free products
with more than two factors are discussed in Section \ref{sec:higher-order}.
We introduce the following \textit{first visit generating functions} for $z\in\mathbb{C}$,
$i\in\{1,2\}$ and all $s_i\in\mathrm{supp}(\mu_i)$, $s\in\mathrm{supp}(\mu)=\mathrm{supp}(\mu_1)\cup \mathrm{supp}(\mu_2)$:
\begin{eqnarray*}
F_i\bigl(s_i \bigl| z\bigr) & := & \sum_{n\geq 0} 
\Prob\bigl[ X_n^{(i)} = e_i, \forall m<n: X_m^{(i)}\neq e_i\, \bigl|\,
X_0^{(i)} = s_i\bigr]\,z^n,\\
F\bigl(s \bigl| z\bigr) & := & \sum_{n\geq 0} 
\Prob\bigl[ X_n = e, \forall m<n: X_m\neq e\, \bigl|\,
X_0 = s\bigr]\,z^n,
\end{eqnarray*}
where $\bigl(X_n^{(i)}\bigr)_{n\in\N_0}$ is a random walk on $\Gamma_i$ governed by
$\mu_i$. By conditioning on the number of visits of $e_i$ the functions $F_i\bigl(s_i \bigl| z\bigr)$ are directly linked with $G_i(z)$ via
\begin{equation}\label{equ:G-i}
G_i(z) = \frac{1}{1-\sum_{s_i\in\mathrm{supp}(\mu_i)}\mu_i(s_i)\,z\, F_i\bigl(s_i \bigl| z\bigr)}.
\end{equation}
In the following we will summarize some further important basic facts, where we will refer to Woess
\cite{woess} for further details.
Define 
\begin{equation}\label{def:xi}
\begin{array}{rcl}
\zeta_1(z) & := & \frac{\alpha_1 z}{1-\alpha_2 z \sum_{s_2\in\mathrm{supp}(\mu_2)} \mu_2(s_2)
  F(s_2| z)} \ \textrm{ and } \\[3ex]
\zeta_2(z) & := &\frac{\alpha_2 z}{1-\alpha_1 z \sum_{s_1\in\mathrm{supp}(\mu_1)}
  \mu_1(s_1)   F(s_1| z)}.
\end{array}
\end{equation}
Note that $\zeta_i(1)$ is the probability of starting at $e$ and making a step
from $e$ w.r.t. $\mu_i$ after finite time. Observe that $F\bigl(s_i \bigl| z\bigr)=F_i\bigl(s_i \bigl| \zeta_i(z)\bigr)$ for $s_i\in\mathrm{supp}(\mu_i)$; see \cite[Proposition 9.18c)]{woess}. By \cite[Equation (9.20)]{woess} and (\ref{equ:G-i}), the functions $F_i\bigl(s_i \bigl| \zeta_i(z)\bigr)$, $G_i(z)$ and $G(z)$ can be linked as follows:
\begin{equation}\label{equ:G-Gi-link}
G(z)= \frac{\zeta_i(z)}{\alpha_i\,z} G_i\bigl(\zeta_i(z)\bigr)=\frac{\zeta_i(z)}{\alpha_i\,z\,\Bigl(1-\sum_{s_i\in\mathrm{supp}(\mu_i)}\mu_i(s_i)\,\zeta_i(z)\, F_i\bigl(s_i \bigl| \zeta_i(z)\bigr)\Bigr)}.
\end{equation}
Hence, our aim will be to determine an expansion of $\zeta_i(z)$ in a neighbourhood of $z=\r$, in order to get a singular expansion for $G(z)$ in a neighbourhood of $z=\r$.
By \cite[Proposition 9.10]{woess}, there are functions $\Phi_i$, $i\in\{1,2\}$, and $\Phi$ such that
\begin{equation}\label{phi_12}
G_i(z) = \Phi_i\bigl(zG_i(z)\bigr) \ \textrm{ and } \
G(z) = \Phi\bigl(zG(z)\bigr)
\end{equation}
for all $z\in\mathbb{C}$ in an open neighbourhood of the intervals $[0,\r_i)$ and $[0,\r)$ respectively.
In particular, the functions $\Phi_i$ and $\Phi$ are analytic in an
open neighbourhood of the intervals $[0,\theta_i)$ and
$[0,\theta)$ respectively, where $\theta_i:=\r_iG_i(\r_i)$ and
$\theta:=\r G(\r)$. $\Phi_i$ and $\Phi$ are also strictly increasing and strictly convex in $[0,\theta_i)$ and
$[0,\theta)$ respectively. Furthermore, we define 
\begin{equation}\label{equ:Phi-Psi}
\Psi_i(t):=\Phi_i(t)-t\Phi_i'(t) \quad \textrm{ and } \quad
\Psi(t):=\Phi(t)-t\Phi'(t).
\end{equation}
By \cite[Theorem 9.19]{woess}, 
\begin{equation}\label{equ:Phi-Psi-formula}
\Phi(t) = \Phi_1(\alpha_1 t) + \Phi_2(\alpha_2
t)-1 \quad \textrm{ and } \quad
\Psi(t) = \Psi_1(\alpha_1 t) + \Psi_2(\alpha_2 t)-1.
\end{equation}
We write $\Psi_i(\theta_i):=\lim_{t\to\theta_i-} \Psi_i(t)$.
Define
$$
\bar{\theta}:=\min \left \{
  \frac{\theta_1}{\alpha_1},\frac{\theta_2}{\alpha_2}\right \}.
$$
We will make a case distinction according to finiteness of $G_i(\r_i)$ and $G_i'(\r_i)$
and also to the sign
of $\Psi(\bar\theta):=\lim_{t\to\bar\theta-} \Psi(t)$. If $\Psi(\bar\theta)<0$ then the $n$-step return
probabilities of the random walk on $\Gamma$ behave asymptotically like
$$
\mu^{(n\delta)}(e) \sim C \cdot \r^{-n\delta} \cdot n^{-3/2}
$$ 
and the Green function of the random walk on $\Gamma$ has the form
\begin{equation}\label{equ:G-Psi<0}
G(z)=A(z)+\sqrt{\r-z} B(z),
\end{equation}
where $A(z),B(z)$ are analytic functions in a neighbourhood of $z=\r$ with
$B(\r)\neq 0$; see \cite[Theorem 17.3]{woess} or \cite[Section
VI.7.]{flajolet07}. Moreover, if one fixes any finite, symmetric sets
$\mathcal{S}_i$ of
generators of $\Gamma_i$ for $i\in\{1,2\}$, where each $\mathcal{S}_i$ contains at least one element
of order bigger than $2$, then $\mu_1$, $\mu_2$ and $\alpha_1$ can always be
chosen in a suitable way in order to obtain $\Psi(\bar\theta)<0$ with $\mathrm{supp}(\mu_i)=\mathcal{S}_i$; see \cite[Corollary 17.10]{woess}.
In particular, the same asymptotic behaviour (including an expansion of the Green function of the form (\ref{equ:G-Psi<0})) holds if $\Gamma_1$ and $\Gamma_2$ are finite, see \cite{woess3}. Therefore, we assume from
now on that at least one out of $\Gamma_1$ and $\Gamma_2$ is infinite, and we may restrict our
investigation to the cases $\Psi(\bar\theta)>0$ and $\Psi(\bar\theta)=0$. 
\par 
We remark some important facts for the case $\Psi(\bar\theta)\geq 0$. If the latter holds, we have $\theta=\bar\theta$ and $G(\r)<\infty$, see \cite[Theorem 9.22]{woess}. By \cite[Lemma 17.1.a)]{woess}, we have $\zeta_i(\r)\leq \r_i$ for $i\in\{1,2\}$ with equality if and only if $\theta =\theta_i/\alpha_i$.
\par
The proof for the asymptotic behaviours of the return probabilities is split up over the following sections. In Section \ref{sec:mainresults} we calculate the asymptotics in the case when $\Psi(\bar\theta) >0$, $G_1'(\r_1)<\infty$ and $G_2'(\r_2)<\infty$ hold; see Theorem \ref{thm:Psi<0}. In Section \ref{sec:psi=0} we investigate the case  when $\Psi(\bar\theta)=0$, $G_1'(\r_1)<\infty$ and $G_2'(\r_2)<\infty$ hold; see Theorem \ref{th:psi-bar-theta=0}. From the proof of this theorem we will see that even the case $\Psi(\bar\theta)=0$, $G_1'\bigl(\zeta_1(\r_1)\bigr)<\infty$ and $G_2'\bigl(\zeta_2(\r_2)\bigr)<\infty$ is covered. In Section \ref{sec:lowdim} we treat the remaining cases: Theorem \ref{thm:5.1} covers the case when $G_1(\r_1)<\infty$, $G_1'(\r_1)=\infty$ and $G_2'(\r_2)<\infty$ hold, while Corollary \ref{Cor:5.2} answers the question for the asymptotic behaviour when $G_1'(\r_1)=\infty$ and $G_2'(\r_2)=\infty$. Finally, Theorem \ref{thm:5.3} covers the remaining case when $G_1(\r_1)=\infty$ or $G_2(\r_2)=\infty$.

\section{The Asymptotic Behaviour in the Case $\Psi(\bar\theta) >0$}
\label{sec:mainresults}
Throughout this section we investigate the case $m=2$ and assume that $\Psi(\bar\theta)>0$ and $G_1(z)$
and $G_2(z)$ are differentiable at their radii of
convergence. That is, the Green functions have an expansion as assumed in (\ref{equ:expansion-assumption}). Recall that the smallest singular term w.r.t. $\preceq$ in the expansion of $G_i(z)$, is denoted by $S_i(z)=(\r_i-z)^{q_i}\log^{k_i}(\r_i-z)$ with $d_i<q_i\leq
d_i+1$. Let us remark that Darboux's method yields that the $n$-step return probabilities of the random walk on $\Gamma_i$ governed by $\mu_i$ behave asymptotically
like $\hat C_i \r_i^{-n\delta_i} n^{-\lambda_i}\log^{\kappa_i} n$,
where $\lambda_i$ and $\kappa_i$ are given by (\ref{equ:lambda-kappa}). The aim of this section is to prove the following:
\begin{Th}\label{thm:Psi<0}
Assume that $G_1(z)$ and $G_2(z)$ are differentiable at $z=\r_1$, $z=\r_2$
respectively, and have an expansion as in (\ref{equ:expansion-assumption}). If $S_1(z)\preceq S_2(z)$ and $\Psi(\bar \theta)>0$ then:
$$
\mu^{(n\delta)}(e) \sim 
\begin{cases}
C_1\cdot \r^{-n\delta}\cdot  n^{-\lambda_1}\cdot \log^{\kappa_1}(n), & \textrm{if } \alpha_1 \geq \frac{\theta_1}{\theta_1+\theta_2},\\
C_2\cdot \r^{-n\delta}\cdot  n^{-\lambda_2}\cdot \log^{\kappa_2}(n), & \textrm{if } \alpha_1 < \frac{\theta_1}{\theta_1+\theta_2},
\end{cases}
\quad \textrm{for some constants } C_1, C_2>0.
$$
\end{Th}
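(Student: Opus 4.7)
The plan is to locate and identify the leading singular term of $G(z)$ at $z=\r$ and then apply Darboux's method to extract the coefficient asymptotics. Under the working hypothesis $\bar\theta = \theta_1/\alpha_1$ (the regime $\alpha_1 \geq \theta_1/(\theta_1+\theta_2)$; the opposite regime follows by swapping the labels $1 \leftrightarrow 2$, which interchanges the roles of $S_1$ and $S_2$), we have $\zeta_1(\r) = \r_1$, so by \eqref{equ:G-Gi-link} the singular behavior of $G$ at $z=\r$ should be inherited from that of $G_1$ at $\r_1$, provided no extra branch-type singularity is produced by the coupling. The hypothesis $\Psi(\bar\theta) > 0$ is designed precisely to exclude such an extra branch point in the functional equation $G(z) = \Phi(zG(z))$.

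First I would translate the singular expansion \eqref{equ:expansion-assumption} of $G_i$ at $\r_i$ into an analogous expansion of $\Phi_i$ at $\theta_i$. The relation $\Phi_i\bigl(zG_i(z)\bigr) = G_i(z)$ combined with $G_i(\r_i) + \r_i G_i'(\r_i) > 0$ (finite because $G_i'(\r_i) < \infty$) allows one to invert $t = zG_i(z)$: the inverse $z(t)$ is analytic near $t = \theta_i$ up to singular corrections of the same order as in $G_i$. Substituting into \eqref{equ:expansion-assumption} yields an expansion of $\Phi_i$ whose leading singular term is $B_i(\theta_i - t)^{q_i}\log^{k_i}(\theta_i - t)$ for some $B_i \neq 0$. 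By \eqref{equ:Phi-Psi-formula}, the function $\Phi(t) = \Phi_1(\alpha_1 t) + \Phi_2(\alpha_2 t) - 1$ inherits its leading singular term at $\bar\theta$ from $\Phi_1(\alpha_1\,\cdot\,)$ when $\bar\theta < \theta_2/\alpha_2$ strictly; in the boundary case $\bar\theta = \theta_1/\alpha_1 = \theta_2/\alpha_2$ both summands are singular, but the assumption $S_1 \preceq S_2$ ensures the $\Phi_1$-contribution still dominates.

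Next I would transfer this expansion to $G$ via the implicit equation $W(z) := zG(z) = z\Phi\bigl(W(z)\bigr)$. At $(z,W) = (\r, \bar\theta)$ the derivative $\partial_W\bigl[z\Phi(W) - W\bigr] = \r\Phi'(\bar\theta) - 1$ is non-zero, since $\Psi(\bar\theta) > 0$ rewrites as $G(\r)\bigl(1 - \r\Phi'(\bar\theta)\bigr) > 0$, i.e.\ $\r\Phi'(\bar\theta) < 1$. Splitting $\Phi$ into its analytic and its singular part and solving the implicit equation gives
\[
W(z) = \bar\theta + a(\r-z) + A\,(\r-z)^{q_1}\log^{k_1}(\r-z) + \ldots
\]
with non-zero constants $a$, $A$, whence $G(z) = W(z)/z$ admits a singular expansion of the same leading type at $z = \r$. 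Darboux's method, applied after restricting to multiples of the period $\delta$ (equivalently, by summing the contributions of the $\delta$ conjugate singularities on $|z|=\r$), then delivers $\mu^{(n\delta)}(e) \sim C_1\,\r^{-n\delta}\,n^{-\lambda_1}\log^{\kappa_1}(n)$, with the conversion $(q_1, k_1) \mapsto (\lambda_1, \kappa_1)$ read off from \eqref{equ:lambda-kappa}.

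The most delicate point is the bookkeeping in the substitution $t = W(z)$: one has to check that when the singular pieces $(\theta_i - t)^{q}\log^k(\theta_i - t)$ inside $\Phi_i$ are composed with $W(z) - \bar\theta = a(\r-z) + \ldots$, the result retains $(\r-z)^{q}\log^k(\r-z)$ as leading singular type up to a non-vanishing analytic factor, and every cross term produced is of strictly higher order, so that the required differentiability up to order $d_1 + 2$ for Darboux's method is preserved and no spurious dominant term is created.
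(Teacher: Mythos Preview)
Your approach is correct and genuinely different from the paper's. The paper does not pass through $\Phi_i$ at all; instead it works directly with the pair $\zeta_1(z),\zeta_2(z)$ and the coupled system \eqref{equ:xi1}--\eqref{equ:xi2}. It first shows $\zeta_i'(\r)<\infty$, then inductively builds the polynomial part of each $\zeta_i$ up to degree $D$ by repeatedly substituting into the system and inverting a fixed $2\times 2$ matrix $M$; a separate lemma establishes $\det(M)\neq 0$. After that it identifies the leading singular term of $\zeta_i$ and pushes the expansion to order $(\r-z)^{D+2}$ by a finiteness argument, and only then plugs into \eqref{equ:G-Gi-link} to read off the expansion of $G$.

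Your route via the single scalar equation $W=z\Phi(W)$ with $W=zG$ is more streamlined: the non-degeneracy condition $\r\Phi'(\bar\theta)<1$ is immediate from $\Psi(\bar\theta)>0$, so you avoid the ad hoc computation of $\det(M)$. The price is that you must first manufacture the singular expansion of $\Phi_i$ at $\theta_i$ by inverting $t=zG_i(z)$, which the paper never does; and the ``bookkeeping'' you flag as delicate (composing the singular pieces with $W(z)-\bar\theta$ and checking that only finitely many terms arise below order $D+2$, with no cancellation of the leading one) is essentially the content of the paper's Lemma~\ref{lemma:expansion-higher-order}, so you would still need an argument of that type. The paper's choice buys expansions of $\zeta_1,\zeta_2$ themselves, which it reuses in Section~\ref{sec:psi=0}; your choice buys a cleaner conceptual picture of why $\Psi(\bar\theta)>0$ is exactly the condition preventing a square-root branch.
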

In the following we may assume w.l.o.g. that $\theta = \bar\theta =
\theta_1/\alpha_1$.
Recall that $F(s_i | z)= F_i(s_i |\zeta_i (z))$ for all $s_i\in \operatorname{supp}(\mu_i) $. Then we rewrite (\ref{def:xi}) as follows:
\begin{eqnarray}
\alpha_1 z & = & \zeta_1(z) \Bigl( 1- \alpha_2 z \sum_{s_2\in\mathrm{supp}(\mu_2)} \mu_2(s_2)
F_2\bigl(s_2|\zeta_2(z)\bigr)\Bigr) ,\label{equ:xi1}\\
\alpha_2 z & = & \zeta_2(z) \Bigl( 1- \alpha_1 z \sum_{s_1\in\mathrm{supp}(\mu_1)} \mu_1(s_1)
F_1\bigl(s_1|\zeta_1(z)\bigr)\Bigr) .\label{equ:xi2}
\end{eqnarray}
Recall that $\zeta_1(\r)=\r_1$ and $\zeta_2(\r) \leq \r_2$ with equality if and only if $\theta =\theta_1/\alpha_1=\theta_2/\alpha_2$. We remark also that $\Psi(\bar\theta) >0$ implies $G'(\r)<\infty$: since $\Phi'(\bar\theta)<\Phi(\bar\theta)/\bar\theta=1/\r$ we get by differentiating (\ref{phi_12})
$$
G'(\r)=\lim_{z\to\r} \frac{\Phi'\bigl(zG(z)\bigr)\,G(z)}{1-z\,\Phi'\bigl(zG(z)\bigr)}=\frac{\Phi'(\bar\theta)\,G(\r)}{1-\r\,\Phi'(\bar\theta)}<\infty.
$$
Furthermore, we define 
$$
D:=
\begin{cases}
d_1, &\textrm{if } \bar\theta < \theta_2/\alpha_2,\\
\min\{d_1,d_2\}, & \textrm{if } \bar\theta =\theta_1/\alpha_1= \theta_2/\alpha_2.
\end{cases}
$$
We denote by $S(z)$ the \textit{main leading singular term}, which is given by 
$$
S(z)=
\begin{cases}
S_1(z),& \textrm{if } \bar\theta < \theta_2/\alpha_2,\\
\min\bigl\lbrace S_1(z),S_2(z)\bigr\rbrace,& \textrm{if } \bar\theta = \theta_2/\alpha_2.
\end{cases}
$$
\begin{Lemma}\label{lemma:xi_derivatives}
$0<\zeta_1'(\r)<\infty$ and  $0<\zeta_2'(\r)<\infty$.
\end{Lemma}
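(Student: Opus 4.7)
The plan is to differentiate the functional identity
$$\alpha_i\, z\, G(z) \;=\; \zeta_i(z)\, G_i\bigl(\zeta_i(z)\bigr),$$
which is just a rearrangement of (\ref{equ:G-Gi-link}), at $z = \r$ and solve for $\zeta_i'(\r)$. Before that, one has to check that $\zeta_i$ is differentiable at $\r$ (in the one-sided sense along $[0,\r]$). The map $w \mapsto w\, G_i(w)$ is strictly increasing on $[0, \zeta_i(\r)]$ because $G_i$ has non-negative power-series coefficients and $G_i(0) = 1$, and it has a strictly positive, finite one-sided derivative at $\zeta_i(\r)$: indeed, either $\zeta_i(\r) < \r_i$, in which case $G_i$ is analytic at $\zeta_i(\r)$, or $\zeta_i(\r) = \r_i$, in which case $G_i'(\r_i) < \infty$ by the standing hypothesis of the section. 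Hence this map admits a differentiable local inverse $\phi_i^{-1}$, and combining with the already established fact $G'(\r) < \infty$, the identity $\zeta_i(z) = \phi_i^{-1}\bigl(\alpha_i z\, G(z)\bigr)$ furnishes differentiability of $\zeta_i$ at $\r$.

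Once differentiability is in hand, differentiating the identity above at $z = \r$ and solving gives
$$\zeta_i'(\r) \;=\; \frac{\alpha_i\,\bigl(G(\r) + \r\, G'(\r)\bigr)}{G_i\bigl(\zeta_i(\r)\bigr) + \zeta_i(\r)\, G_i'\bigl(\zeta_i(\r)\bigr)}.$$
The numerator is finite and strictly positive: $G(\r) \geq 1$ and $G'(\r) = \sum_{n\geq 1} n\,\mu^{(n)}(e)\,\r^{n-1} \in (0,\infty)$ by the observation preceding the lemma. The denominator is likewise strictly positive, since $G_i(\zeta_i(\r)) \geq G_i(0) = 1$, and finite by the same case analysis as above ($G_i$ either analytic at $\zeta_i(\r)$, or differentiable at $\r_i = \zeta_i(\r)$ by assumption). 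Together these yield $0 < \zeta_i'(\r) < \infty$ for both $i = 1$ and $i = 2$.

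I expect the only delicate point to be the case $i = 2$ with $\zeta_2(\r) = \r_2$ — equivalently $\theta = \theta_1/\alpha_1 = \theta_2/\alpha_2$ — where one cannot appeal to analyticity of $G_2$ at $\zeta_2(\r)$ and must instead use the assumed differentiability of $G_2$ at its radius of convergence $\r_2$. The complementary case $\zeta_2(\r) < \r_2$ is essentially immediate because $G_2$ is analytic in an open neighbourhood of $\zeta_2(\r)$.
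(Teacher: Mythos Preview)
Your proof is correct but follows a different route from the paper's. The paper works directly from the defining formula $\zeta_1(z)=\alpha_1 z/(1-H_2(z))$ with $H_2(z)=\alpha_2 z\sum_{s_2}\mu_2(s_2)F_2(s_2|\zeta_2(z))$: it bounds $H_2'(\r)$ by comparing coefficients with those of $G(z)$ (the $n$-th coefficient of $H_2$ is a probability of a particular first return, hence dominated by $\mu^{(n)}(e)$), obtaining $H_2'(\r)\le G'(\r)<\infty$, and then reads off positivity and finiteness of $\zeta_1'(\r)$ from the explicit quotient rule for $\zeta_1$. Your argument instead inverts the map $w\mapsto wG_i(w)$ in the identity $\alpha_i z G(z)=\zeta_i(z)G_i(\zeta_i(z))$ and applies the one-sided inverse function theorem together with the already-established fact $G'(\r)<\infty$. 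This yields the closed formula
\[
\zeta_i'(\r)=\frac{\alpha_i\bigl(G(\r)+\r\,G'(\r)\bigr)}{G_i(\zeta_i(\r))+\zeta_i(\r)\,G_i'(\zeta_i(\r))},
\]
which the paper's approach does not produce. Your route treats both indices $i=1,2$ uniformly and is slightly slicker; the paper's coefficient-comparison argument, by contrast, avoids any appeal to the differentiability assumption $G_i'(\r_i)<\infty$ for the factor whose $\zeta_i$ is being studied (it only needs $G'(\r)<\infty$), so it is marginally more robust in that direction.
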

\begin{proof}
We prove the lemma only for $\zeta_1'(\r)$. We write
$$
H_2(z) := \alpha_2 z \sum_{s_2\in\mathrm{supp}(\mu_2)} \mu_2(s_2)
F_2\bigl(s_2|\zeta_2(z)\bigr).
$$
Since $\zeta_1(\r)= \r_1$, we have $H_2(\r)<1$; compare with the definition of $\zeta_1(z)$.  Furthermore, the coefficient of $z^n$ in $H_2(z)$ is just the probability for the random walk on $\Gamma$ of starting at $e$, making the first step w.r.t. $\mu_2$ and returning for the first time to $e$ at time $n$. Thus, this probability is bounded from above by $\mu^{(n)}(e)$, and consequently $H_2'(\r)< G'(\r)<\infty$. Computing the derivative of $\zeta_1(z)$ in a neighbourhood of $z=\r$ gives
\[
 \zeta_1'(z)=\frac{\alpha_1\bigl(1-H_2(z)\bigr)+\alpha_1 z H_2'(z)}{\bigl(1-H_2(z)\bigr)^2}>0 .
\]
Finiteness of $\zeta_1'(\r)$ follows now directly from the remarks above.
\end{proof}
The functions $F_i(s_i|z)$, where $i\in\{1,2\}$ and
$s_i\in\mathrm{supp}(\mu_i)$, are at least $d_i$-times differentiable at $z=\r_i$, since the same holds for $G_i(z)$ and we can compare the coefficients of $z^n$ in the definitions of $F_i(s_i|z)$ and $G_i(z)$ as follows:
$$
\mu_i^{(n)}(e_i) \geq \mu_i(s_i)\cdot \Prob\bigl[ X_n^{(i)} = e_i, \forall m<n: X_m^{(i)}\neq e_i\, \bigl|\,
X_0^{(i)} = s_i\bigr].
$$
Thus, we can rewrite these functions in the form
\begin{equation}\label{equ:F-expansion}
F_i(s_i|z) = \sum_{n=0}^{d_i} f_n(s_i) (\r_i-z)^n + E^{(i)}(s_i|z)
\end{equation}
with coefficients $f_n(s_i)\in\mathbb{R}$ and $E^{(i)}(s_i|z)=\mathbf{o}\bigl((\r_i-z)^{d_i}\bigr)$. If $\zeta_2(\r)<\r_2$ then $F_2(s_2|z)$ is analytic at $z=\zeta_2(\r)$ for all $s_2\in\mathrm{supp}(\mu_2)$ and we can even write 
$$
F_2(s_2|z) = \sum_{n\geq 0} f_n(s_2) \bigl(\zeta_2(\r)-z\bigr)^n.
$$
Now we can prove:
\begin{Lemma}\label{lemma:F-expansion-rest}
For $z\in\mathbb{C}$ in a neighbourhood of $\r_i$,
\begin{eqnarray*}
&&\sum_{s_i\in\mathrm{supp}(\mu_i)} \mu_i(s_i)\,z\, E^{(i)}(s_i|z) \\
&=& 
e^{(i)}_{(q_i,k_i)} (\r_i-z)^{q_i} \log^{k_i}(\r_i-z) +
\sum_{(q,k)\in\widehat{\mathcal{T}_i}} e^{(i)}_{(q,k)} (\r_i-z)^q \log^k(\r_i-z) +
\O\bigl((\r_i-z)^{d_i+2}\bigr),
\end{eqnarray*}
where $e^{(i)}_{(q_i,k_i)}\neq 0$ and $\widehat{\mathcal{T}_i}\subseteq \bigl\lbrace (q,k)\in\mathbb{R}\times \N_0 \mid d_i<q\leq d_i+2,
q>q_i \textrm{ or } (q=q_i \Rightarrow k<k_i)\bigr\rbrace$ is finite.
\end{Lemma}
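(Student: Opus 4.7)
The plan is to exploit the identity
\[
z\sum_{s_i\in\mathrm{supp}(\mu_i)}\mu_i(s_i)\,F_i(s_i|z)\;=\;1-\frac{1}{G_i(z)},
\]
which is just a rewriting of (\ref{equ:G-i}). The strategy is to invert $G_i(z)$ using its singular expansion (\ref{equ:expansion-assumption}) to obtain an analogous expansion of $1/G_i(z)$, subtract the polynomial part of $z\sum_{s_i}\mu_i(s_i)F_i(s_i|z)$ supplied by (\ref{equ:F-expansion}), and read off the expansion of $\sum_{s_i}\mu_i(s_i)\,z\,E^{(i)}(s_i|z)$.

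To invert, decompose $G_i(z)=P(z)+\Sigma(z)+R(z)$, where $P(z)=\sum_{k=0}^{d_i}g^{(i)}_k(\r_i-z)^k$, $\Sigma(z)=\sum_{(q,k)\in\mathcal{T}_i}g^{(i)}_{(q,k)}(\r_i-z)^q\log^k(\r_i-z)$, and $R(z)=\O\bigl((\r_i-z)^{d_i+2}\bigr)$. Since the coefficients of $G_i$ are nonnegative, $P(\r_i)=G_i(\r_i)\geq G_i(0)=1>0$, so $1/P(z)$ is analytic at $z=\r_i$ and its Taylor expansion can be carried to any order. Setting $\rho(z)=(\Sigma(z)+R(z))/P(z)=\O\bigl((\r_i-z)^{q_i}\bigr)$ and choosing an integer $K$ with $(K+1)q_i>d_i+2$, the geometric series yields
\[
\frac{1}{G_i(z)}\;=\;\frac{1}{P(z)}\sum_{j=0}^{K}(-\rho(z))^{j}+\O\bigl((\r_i-z)^{d_i+2}\bigr).
\]
Each $\rho(z)^{j}$ is a finite linear combination of products $(\r_i-z)^{q_1+\cdots+q_j}\log^{k_1+\cdots+k_j}(\r_i-z)$ with $(q_\ell,k_\ell)\in\mathcal{T}_i$, together with contributions from $R(z)$ that are absorbed into the error; only those terms with total exponent in $(d_i,d_i+2]$ survive, giving finitely many surviving singular monomials. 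The leading singular contribution to $1/G_i(z)$ comes from $-\rho(z)/P(z)$ and equals $-\Sigma(z)/G_i(\r_i)^{2}$ at leading order, so the leading singular term of $1-1/G_i(z)$ is $g^{(i)}_{(q_i,k_i)}(\r_i-z)^{q_i}\log^{k_i}(\r_i-z)/G_i(\r_i)^{2}$, whose coefficient is nonzero since $g^{(i)}_{(q_i,k_i)}\neq 0$ by minimality of $S_i$ in $\mathcal{T}_i$.

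To conclude, by (\ref{equ:F-expansion}) and the identity $z=\r_i-(\r_i-z)$,
\[
z\sum_{s_i\in\mathrm{supp}(\mu_i)}\mu_i(s_i)F_i(s_i|z)\;=\;\hat P(z)+\sum_{s_i\in\mathrm{supp}(\mu_i)}\mu_i(s_i)\,z\,E^{(i)}(s_i|z),
\]
where $\hat P(z)$ is a polynomial in $(\r_i-z)$ of degree $d_i+1$. Equating with the expansion of $1-1/G_i(z)$ obtained above, and using that $zE^{(i)}(s_i|z)=\mathbf{o}\bigl((\r_i-z)^{d_i}\bigr)$, the uniqueness of Taylor coefficients at $z=\r_i$ forces the polynomial parts to agree up to order $d_i$; any residual coefficient at $(\r_i-z)^{d_i+1}$ is absorbed as the index $(d_i+1,0)$ into $\widehat{\mathcal{T}_i}$, which is permitted since $d_i<d_i+1\leq d_i+2$. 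The main obstacle is the bookkeeping of the product terms $\rho(z)^{j}$ for $j\geq 2$: their exponents are sums of entries of $\mathcal{T}_i$ and must be shown to lie strictly above $(q_i,k_i)$ in the $\preceq$-order. This holds because each summand of $\rho(z)$ has exponent $\geq q_i$, so any product of $j\geq 2$ of them has exponent $\geq 2q_i>q_i$, never competing with the leading term. Finiteness of $\widehat{\mathcal{T}_i}$ then follows from the finiteness of $\mathcal{T}_i$ and of $K$.
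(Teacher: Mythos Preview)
Your proof is correct and rests on the same identity the paper uses, namely $U_i(z)=1-1/G_i(z)$ (equivalently $G_i(z)(1-U_i(z))=1$). The paper's argument substitutes the expansions of $G_i$ and $U_i$ into $G_i(1-U_i)=1$, isolates the unknown remainder $R_{U_i}$ against the known $R_{G_i}$, and then iterates to peel off one singular term at a time; you instead solve for $U_i$ explicitly and expand $1/G_i$ in one stroke via the geometric series in $\rho=(\Sigma+R)/P$. This is the same content organised differently: your route is a little more direct and gives all the surviving singular monomials (and their finiteness) at once, while the paper's iterative version makes the matching of leading terms slightly more explicit at each step. Either way the key observations are that $G_i(\r_i)>0$ so $1/P$ is analytic, that the leading singular contribution to $1-1/G_i$ is $g^{(i)}_{(q_i,k_i)}S_i(z)/G_i(\r_i)^2\neq 0$, and that all cross-terms have strictly larger exponent than $q_i$.
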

\begin{proof}
Define
$$
U_i(z):=\sum_{s_i\in\mathrm{supp}(\mu_i)}\mu_i(s_i)\,z\,F_i(s_i| z).
$$
Observe that the expansions of $U_i(z)$ and $G_i(z)$ have the same leading
singular term:
indeed, both functions are $d_i$-times differentiable in a
neighbourhood of $z=\r_i$ due to the well-known equation $G_i(z)=1/\bigl(1-U_i(z)\bigr)$. Therefore, we have expansions
\[
 G_i(z)=\sum_{k=0}^{d_i}g_k^{(i)}(\r_i-z)^k+R_{G_i}(z)\quad \textnormal{and}\quad
U_i(z)=\sum_{k=0}^{d_i}u_k^{(i)}(\r_i-z)^k+R_{U_i}(z),
\]
where $R_{G_i}(z)=\O_c\bigl(S_i(z)\bigr)$ and $R_{U_i}(z)=\o\bigl((\r_i-z)^{d_i}\bigr)$.
Substituting these expansions into $G_i(z) (1-U_i(z))=1$, and taking all
polynomial terms to one side, we get
$$
\bigl(1-U_i(\r_i)\bigr)\, R_{G_i}(z) - G_i(\r_i)\, R_{U_i(z)} = p(z) + \o\bigl((\r_i-z)^{d_i+1}\bigr),
$$
where $p(z)$ is some polynomial. This equation implies that the right hand side is of order $\O\bigl((\r_i-z)^{d_i+1}\bigr)$, that is, $R_{U_i(z)}=\O_c\bigl(S_i(z)\bigr)$ and
we can write
$$
U_i(z)=\sum_{k=0}^{d_i}u_k^{(i)}(\r_i-z)^k+u_{(q_i,k_i)}^{(i)} S_i(z) +
\widehat{R}_{U_i}(z)\quad \textrm{ with } \widehat{R}_{U_i}(z)=\o\bigl(S_i(z)\bigr).
$$
Plugging this expansion once again into $G_i(z) (1-U_i(z))=1$, comparing
error terms and iterating the last steps, together with substituting (\ref{equ:F-expansion}) in the definition of $U_i(z)$, yields the claim.
\end{proof}
The next goal is to show that $\zeta_1(z)$ and $\zeta_2(z)$ are $D$-times
differentiable at $z=\r$. 
\begin{Prop}\label{prop:xi-expansion}
There are real numbers $x_0,x_1,\dots,x_{D}$ and $y_0,y_1,\dots,y_{D}$ such that
$$
\zeta_1(z)  = \sum_{k=0}^{D} x_k\,(\r-z)^k + X_{D}^{(1)}(z)\ \textrm{ and }\
\zeta_2(z)  = \sum_{k=0}^{D} y_k\,(\r-z)^k + X_{D}^{(2)}(z),
$$
where $X_{D}^{(1)}(z)=\mathbf{o}\bigl((\r-z)^{D}\bigr)$ and
$X_{D}^{(2)}(z)=\mathbf{o}\bigl((\r-z)^{D}\bigr)$.
\end{Prop}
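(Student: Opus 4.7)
The plan is to view (\ref{equ:xi1})--(\ref{equ:xi2}) as an implicit system $F(z,\zeta_1,\zeta_2)=0$ in a neighbourhood of $z=\r$, with $\bigl(\zeta_1(\r),\zeta_2(\r)\bigr)=\bigl(\r_1,\zeta_2(\r)\bigr)$ (and $\zeta_2(\r)=\r_2$ exactly in the critical case $\bar\theta=\theta_2/\alpha_2$), and to extract the expansion via successive differentiation. The base case is supplied directly by Lemma \ref{lemma:xi_derivatives}, which already gives $\zeta_1'(\r),\zeta_2'(\r)\in(0,\infty)$ and hence the order-$1$ expansions of both $\zeta_i$.

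For the inductive step, I would assume $\zeta_1$ and $\zeta_2$ admit order-$k$ expansions at $\r$ for some $1\le k<D$ and differentiate (\ref{equ:xi1}) and (\ref{equ:xi2}) exactly $(k+1)$ times in $z$ before evaluating at $z=\r$. Applying Faà di Bruno's formula to the composition $F_i\bigl(s_i\,|\,\zeta_i(z)\bigr)$ produces a $2\times 2$ linear system for $\bigl(\zeta_1^{(k+1)}(\r),\zeta_2^{(k+1)}(\r)\bigr)$ whose coefficient matrix is the Jacobian $J(\r)$ of $F$ in $(\zeta_1,\zeta_2)$ -- the \emph{same} matrix at every order -- and whose right-hand side involves only derivatives of $\zeta_i$ of order $\le k$ (known by the inductive hypothesis) together with derivatives of $F_i(s_i|\cdot)$ at $\zeta_i(\r)$ of order $\le k+1$. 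The latter are available thanks to the choice of $D$: if $\zeta_i(\r)<\r_i$ then $F_i(s_i|\cdot)$ is analytic at $\zeta_i(\r)$, while if $\zeta_i(\r)=\r_i$ the $d_i$-fold differentiability of $F_i(s_i|\cdot)$ built into (\ref{equ:expansion-assumption}) suffices. This is exactly what forces $D=d_1$ when $\bar\theta<\theta_2/\alpha_2$ and $D=\min\{d_1,d_2\}$ when the minimum is attained at both factors. The invertibility of $J(\r)$ needed to solve for $\zeta_i^{(k+1)}(\r)$ follows from Lemma \ref{lemma:xi_derivatives}: the order-$1$ derivatives are finite and uniquely picked out by the series, and since $J(\r)$ is independent of $k$, a single invertibility check at $k=1$ propagates to every order.

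The error term $X_D^{(i)}(z)=\mathbf{o}\bigl((\r-z)^D\bigr)$ would then be inherited from the error term $E^{(i)}(s_i|z)=\mathbf{o}\bigl((\r_i-z)^{d_i}\bigr)$ in (\ref{equ:F-expansion}) through the chain rule applied to $w=\zeta_i(z)$, using that $\zeta_i(\r)-\zeta_i(z)=-x_1^{(i)}(\r-z)+\mathbf{o}(\r-z)$ with $x_1^{(i)}=-\zeta_i'(\r)\neq 0$; substituting the previously established order-$D$ polynomial part of $\zeta_i(z)$ yields an expansion of $F_i(s_i|\zeta_i(z))$ with a remainder of the claimed order, and the algebraic manipulations (\ref{equ:xi1})--(\ref{equ:xi2}) preserve this smallness for $\zeta_i$ itself.

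The main obstacle I anticipate is the careful bookkeeping in the critical case $\zeta_i(\r)=\r_i$: here $F_i(s_i|\cdot)$ has only $d_i$-fold differentiability at its evaluation point, so the chain rule has to be applied with limited smoothness. The identification $D=\min\{d_1,d_2\}$ is tight for this, and one must verify that neither the formation of the linear system for $\zeta_i^{(k+1)}(\r)$ nor the propagation of the $\mathbf{o}\bigl((\r-z)^D\bigr)$ remainder through Faà di Bruno's formula demands more derivatives of $F_i$ than are available; everything else (existence of unique solutions to the linear system, summation of lower-order contributions) is routine once the Jacobian non-degeneracy has been pinned down via Lemma \ref{lemma:xi_derivatives}.
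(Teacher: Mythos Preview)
Your overall architecture matches the paper's: induct on the order of the expansion, substitute into (\ref{equ:xi1})--(\ref{equ:xi2}), and at each step solve a $2\times 2$ linear system whose coefficient matrix is the same Jacobian $J(\r)$ (the paper calls it $M$). The observation that the matrix is order-independent, so one invertibility check suffices, is exactly right.

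The gap is in that invertibility check. You write that it ``follows from Lemma \ref{lemma:xi_derivatives}: the order-$1$ derivatives are finite and uniquely picked out by the series.'' But Lemma \ref{lemma:xi_derivatives} only establishes that $\zeta_1'(\r),\zeta_2'(\r)\in(0,\infty)$; it does \emph{not} say anything about whether the linearised system determines them uniquely. Knowing that a linear system $J(\r)\,v=b$ admits the particular solution $v=\bigl(\zeta_1'(\r),\zeta_2'(\r)\bigr)$ does not force $\det J(\r)\neq 0$: a singular system can perfectly well have solutions. So the argument as written is circular --- you are inferring invertibility from uniqueness, when uniqueness is what invertibility is supposed to deliver.

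The paper closes this gap with a separate computation (the unnamed Lemma inside the proof of Proposition \ref{prop:xi-expansion}): it writes out the entries of $M$ explicitly, assumes $\det M=0$, and derives a contradiction by combining the differentiated equations (\ref{equ:xi1})--(\ref{equ:xi2}) with the sign information $\zeta_i'(\r)>0$, $m_{11},m_{22}>0$, and $m_{12},m_{21}<0$. The contradiction comes down to an equation of the form $\zeta_2'(\r)=\dfrac{BC}{m_{22}A}\,\zeta_2'(\r)$ with $A,B,m_{22}>0$ and $C<0$, which is impossible since $\zeta_2'(\r)>0$. So Lemma \ref{lemma:xi_derivatives} is indeed an ingredient, but only through the strict positivity of the derivatives feeding into this sign argument --- not through any ``uniqueness'' shortcut. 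You need to supply this (or an equivalent) computation.
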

\begin{proof}
We prove the proposition by determining $x_0,x_1,\dots,x_{D}$ and
$y_0,y_1,\dots,y_{D}$ inductively. By Lemma \ref{lemma:xi_derivatives} and a well-known characterization of differentiability, we can rewrite $\zeta_1(z)$ and $\zeta_2(z)$ in the following way:
\begin{equation}\label{equ:xi-1st}
\begin{array}{rcl}
\zeta_1(z) & = & \r_1 - \zeta_1'(\r)\,(\r-z) + X_1^{(1)}(z), \textrm{ where }
X_1^{(1)}(z) = \mathbf{o}(\r-z),\\[2ex]
\zeta_2(z) & = & \zeta_2(\r) - \zeta_2'(\r)\,(\r-z) + X_1^{(2)}(z), \textrm{ where }
X_1^{(2)}(z) = \mathbf{o}(\r-z).
\end{array}
\end{equation}
Thus, we have determined $x_0, x_1$ and $y_0,y_1$. 
Assume now that we can write for some $t< D$
\begin{equation}\label{equ:1st-xi-expansion}
\zeta_1(z) =  \sum_{k=0}^{t} x_k\,(\r-z)^k + X_{t}^{(1)}(z)\ \textrm{ and }\
\zeta_2(z) =  \sum_{k=0}^{t} y_k\,(\r-z)^k + X_{t}^{(2)}(z),
\end{equation}
where $X_{t}^{(1)}(z)=\mathbf{o}\bigl((\r-z)^{t}\bigr)$ and
$X_{t}^{(2)}(z)=\mathbf{o}\bigl((\r-z)^{t}\bigr)$. 
Recall from (\ref{equ:F-expansion}) that we have expansions of
$F_1(s_1 | z\bigr)$ and $F_2(s_2 | z)$ of the form
\begin{equation}
\label{equ:1st-F-expansion}
\begin{array}{rcl}
F_1(s_1| z)  & = &  \sum_{n=0}^{D} a_n(s_1)(\r_1-z)^n +
E^{(1)}(s_1|z)\ \textrm{ and } \\[2ex]
F_2(s_2 |z) & = & \sum_{n=0}^{D} b_n(s_2)\bigl(
\zeta_2(\r)-z\bigr)^n + E^{(2)}(s_2|z),
\end{array}
\end{equation}
where $E^{(i)}(s_i|z)=\mathbf{o}\bigl((\zeta_i(\r)-z)^{D}\bigr)$. In particular, if $\bar\theta<\theta_2/\alpha_2$ then $\zeta_2(\r)<\r_2$ and consequently we can even write $F_2(s_2 |z)  =  \sum_{n\geq 0} b_n(s_2)\bigl(\zeta_2(\r)-z\bigr)^n$. Recall that the case $D=d_1>d_2$ implies $\bar\theta<\theta_2/\alpha_2$.
We now substitute the expansions
(\ref{equ:1st-xi-expansion}) and (\ref{equ:1st-F-expansion}) in Equations
(\ref{equ:xi1}) and (\ref{equ:xi2}), yielding the following system:
\begin{equation}\label{equ:system-begin}
\begin{array}{rcl}
\alpha_1 z & = & \Bigl(\sum_{k=0}^{t} x_k\,(\r-z)^k + X_{t}^{(1)}(z)\Bigr ) \Biggl[ 1 - \alpha_2 \bigl(\r-(\r-z)\bigr) \sum_{s_2\in\mathrm{supp}(\mu_2)}
\mu_2(s_2) \cdot \\
 && \quad \cdot \Bigl[\sum_{n=0}^{D}
b_n(s_2) \Bigl(-\sum_{k=1}^{t} y_k\,(\r-z)^k - X_{t}^{(2)}(z)\Bigr)^n +
E^{(2)}\bigl(s_2 \bigl| \zeta_2(z)\bigr)\Bigr]\Biggr], \\
\alpha_2 z & = & \Bigl(\sum_{k=0}^{t} y_k\,(\r-z)^k + X_{t}^{(2)}(z)\Bigr ) \Biggl[ 1 - \alpha_1 \bigl(\r-(\r-z)\bigr) \sum_{s_1\in\mathrm{supp}(\mu_1)}
\mu_1(s_1) \cdot \\
&&\quad \cdot \Bigl[\sum_{n=0}^{D}
a_n(s_1) \Bigl(-\sum_{k=1}^{t} x_k\,(\r-z)^k - X_{t}^{(1)}(z)\Bigr)^n
+E^{(1)}\bigl(s_1\bigl| \zeta_1(z)\bigr)\Bigr]\Biggr]. 
\end{array}
\end{equation}
Observe that $\sum_{s_i\in\mathrm{supp}(\mu_i)}\mu(s_i)z E^{(i)}\bigl (s_i\bigl| \zeta_i(z)\bigr) =\mathbf{o}\bigl((\zeta_i(\r)-\zeta_i(z))^{D}\bigr)=\mathbf{o}\bigl((\r-z)^{D}\bigr)$.
We now bring all polynomial and higher order terms to the left hand side and get:
\begin{equation}\label{equ:system0}
\begin{array}{rcl}
 P_t^{(1)} (z) + \mathbf{o}\bigl((\r  - z)^{t+1}\bigr)  
& = &\Bigl[1 - \alpha_2 \r \sum_{s_2\in\mathrm{supp}(\mu_2)}\mu_2(s_2)
b_0(s_2)\Bigr ] X_{t}^{(1)} (z)  \\[1ex]
&&\quad +  \Bigl[\alpha_2 \r_1 \r \sum_{s_2\in\mathrm{supp}(\mu_2)}\mu_2(s_2)
b_1(s_2)\Bigr]X_{t}^{(2)} (z),\\[1ex]
P_t^{(2)} (z)  +  \mathbf{o}\bigl((\r - z)^{t+1}\bigr) 
& = & \Bigl[ \alpha_1 \zeta_2(\r) \r  \sum_{s_1\in\mathrm{supp}(\mu_1)} \mu_1(s_1) a_1 (s_1) \Bigr] X_{t}^{(1)} (z)\!
 \\[1ex]
&&\quad +\Bigl[1 - \alpha_1 \r  \sum_{s_1\in\mathrm{supp}(\mu_1)} \mu_1(s_1) a_0(s_1)\Bigr] X_{t}^{(2)} (z) ,
\end{array}
\end{equation}
where $P_t^{(1)}(z)$ and $P_t^{(2)}(z)$ are polynomials in the variable
$z$. By assumption on $X_{t}^{(1)} (z)$ and $X_{t}^{(2)} (z)$, the right hand sides of (\ref{equ:system0}) are of order $\o\bigl((\r-z)^{t}\bigr)$. Therefore, the left hand sides have to be of order $\O\bigl((\r-z)^{t+1}\bigr)$, and consequently the right hand sides have to be also of order $\O\bigl((\r-z)^{t+1}\bigr)$.
It remains to show that $X_{t}^{(1)}(z)=\O\bigl( (\r-z)^{t+1}\bigr)$ and
$X_{t}^{(2)}(z)=\O\bigl( (\r-z)^{t+1}\bigr)$. For this
purpose, define the matrix $M=(m_{ij})_{1\leq i,j\leq 2}$ by
\begin{eqnarray*}
m_{11} & := & 1- \alpha_2 \r \sum_{s_2\in\mathrm{supp}(\mu_2)}\mu_2(s_2)
b_0(s_2),\\[1ex]
m_{12} & := & \alpha_2 \r_1 \r \sum_{s_2\in\mathrm{supp}(\mu_2)}\mu_2(s_2) b_1(s_2),\\[1ex]
m_{21} & := & \alpha_1 \zeta_2(\r) \r \sum_{s_1\in\mathrm{supp}(\mu_1)}\mu_1(s_1)
a_1(s_1), \\
m_{22} & := & 1- \alpha_1 \r \sum_{s_1\in\mathrm{supp}(\mu_1)}\mu_1(s_1) a_0(s_1).
\end{eqnarray*}
Then the system (\ref{equ:system0}) is equivalent to
$$
M \cdot 
\left(
\begin{array}{c}
X_{t}^{(1)}(z)\\
X_{t}^{(2)}(z)
\end{array}
\right) 
= \left(
\begin{array}{c}
Q_t^{(1)}(z)\\
Q_t^{(2)}(z)
\end{array}
\right),
$$
where $Q_t^{(1)}(z)=\O\bigl((\r-z)^{t+1}\bigr)$ and
$Q_t^{(2)}(z)=\O\bigl((\r-z)^{t+1}\bigr)$. If the matrix $M$ is invertible,
then obviously $X_{t}^{(1)}(z)=\O\bigl((\r-z)^{t+1}\bigr)$ and
$X_{t}^{(2)}(z)=\O\bigl((\r-z)^{t+1}\bigr)$. To this end, we now prove
invertibility of $M$:
\begin{Lemma}
$\det (M) \neq 0$.
\end{Lemma}
\begin{proof}
We start with differentiating equations (\ref{equ:xi1}) and (\ref{equ:xi2}):
\begin{eqnarray*}
\alpha_1 & = & \Bigl( -\alpha_2 \!\! \sum_{s_2\in\mathrm{supp}(\mu_2)}\!\! \mu_2(s_2)
F_2\bigl(s_2|\zeta_2(z)\bigr) - \alpha_2 z\!\! \sum_{s_2\in\mathrm{supp}(\mu_2)}\!\! \mu_2(s_2)
F_2'\bigl(s_2|\zeta_2(z)\bigr) \zeta_2'(z) \Bigr) \zeta_1(z)\\
&& \quad 
+ \zeta_1'(z) 
\Bigl( 1- \alpha_2 z \sum_{s_2\in\mathrm{supp}(\mu_2)} \mu_2(s_2)
F_2\bigl(s_2|\zeta_2(z)\bigr)\Bigr),\\[2ex]
\alpha_2 & = & \Bigl( - \alpha_1 \!\! \sum_{s_1\in\mathrm{supp}(\mu_1)}\!\! \mu_1(s_1)
F_1\bigl(s_1|\zeta_1(z)\bigr)- \alpha_1 z\!\! \sum_{s_1\in\mathrm{supp}(\mu_1)}\!\! \mu_1(s_1)
F_1'\bigl(s_1|\zeta_1(z)\bigr) \zeta_1'(z)\Bigr) \zeta_2(z) \\
&& \quad + \zeta_2'(z) 
\Bigl( 1- \alpha_1 z \sum_{s_1\in\mathrm{supp}(\mu_1)} \mu_1(s_1)
F_1\bigl(s_1|\zeta_1(z)\bigr)\Bigr). 
\end{eqnarray*}
Observe that we have $a_0(s_1)=F_1(s_1| \r_1)$,
$a_1(s_1)=-F_1'(s_1| \r_1)$,
$b_0(s_2)=F_2\bigl(s_2|\zeta_2(\r)\bigr)$ and
$b_1(s_2)=-F_2'\bigl(s_2|\zeta_2(\r)\bigr)$. Substituting these values in the above system 
and letting $z\to \r$ yields
\begin{eqnarray*}
\alpha_1 & = & \Bigl( -\alpha_2 \sum_{s_2\in\mathrm{supp}(\mu_2)} \mu_2(s_2)
b_0(s_2) + \alpha_2 \r \sum_{s_2\in\mathrm{supp}(\mu_2)} \mu_2(s_2)
b_1(s_2) \zeta_2'(\r) \Bigr) \r_1 
+ \zeta_1'(\r) m_{11},\\
\alpha_2 & = & \Bigl( - \alpha_1\! \sum_{s_1\in\mathrm{supp}(\mu_1)}\! \mu_1(s_1)
a_0(s_1) + \alpha_1 \r \! \sum_{s_1\in\mathrm{supp}(\mu_1)}\! \mu_1(s_1)
a_1(s_1) \zeta_1'(\r)\Bigr) \zeta_2(\r) + \zeta_2'(\r) m_{22}.
\end{eqnarray*}
Since $\zeta_1(\r),\zeta_2(\r)>0$ and $a_1(s_1),b_1(s_2)<0$ the last equations imply
$m_{11},m_{22}>0$.
We proceed with rewriting the last system: 
\begin{equation}\label{equ:system5}
\begin{array}{rcl}
\alpha_2 \r_1 \r \sum_{s_2\in\mathrm{supp}(\mu_2)} \mu_2(s_2)
b_1(s_2) \zeta_2'(\r) & = & 
A - \zeta_1'(\r) m_{11},\\[2ex]
\alpha_1  \zeta_2(\r) \r \sum_{s_1\in\mathrm{supp}(\mu_1)} \mu_1(s_1)
a_1(s_1) \zeta_1'(\r) & = & 
B - \zeta_2'(\r) m_{22},
\end{array}
\end{equation}
where
$$
A  :=  \alpha_1 + \alpha_2 \r_1\sum_{s_2\in\mathrm{supp}(\mu_2)} \mu_2(s_2)
b_0(s_2)\ \textrm{ and } \  
B  :=  \alpha_2 + \alpha_1 \zeta_2(\r) \sum_{s_1\in\mathrm{supp}(\mu_1)} \mu_1(s_1)
a_0(s_1).
$$
Multiplying both equations in (\ref{equ:system5}) yields
the equation
$$
\zeta_1'(\r) \zeta_2'(\r)m_{12} m_{21} = AB - \zeta_1'(\r) m_{11} B -\zeta_2'(\r) m_{22} A
+ \zeta_1'(\r) \zeta_2'(\r) m_{11} m_{22}.
$$
Assume now that $\det(M)=0$.  Then we would get
$$
\zeta_1'(\r) m_{11} B + \zeta_2'(\r) m_{22} A = AB,
$$
or equivalently,
\begin{equation}\label{equ:xi2'}
\zeta_2'(\r) = \frac{AB - \zeta_1'(\r) m_{11} B}{m_{22} A}.
\end{equation}
Furthermore, (\ref{equ:system5}) implies
$$
\zeta_1'(\r) = \bigl(A-C \zeta_2'(\r)\bigr) / m_{11},
$$
where $C:=\alpha_2 \r_1 \r \sum_{s_2\in\mathrm{supp}(\mu_2)} \mu_2(s_2)b_1(s_2)<0$.
Substituting the last equation in (\ref{equ:xi2'}) would lead to
$$
\zeta_2'(\r) = \frac{BC}{m_{22}A} \zeta_2'(\r).
$$
Observe now that $A,B,m_{22}>0$ and $C<0$. This yields a contradiction in the
last equation, since $\zeta_2'(\r)>0$. Thus, $\det(M)\neq 0$.
\end{proof}
The last lemma finishes the proof of Proposition \ref{prop:xi-expansion}.
\end{proof}
Recall the definition of the main leading singular term $S(z)=S_i(z)=(\r_i-z)^{q_i} \log^{k_i}(\r_i-z)$.
The next aim is to show that at least one of the functions $X_{D}^{(1)}(z)$ and
$X_{D}^{(2)}(z)$ has order $\O_c\bigl((\r-z)^{q_i} \log^{k_i}
(\r-z)\bigr)$. To this end, we look at the final step of
the induction in the proof of Proposition \ref{prop:xi-expansion}. For $t=D$,
the system (\ref{equ:system-begin}) becomes
\begin{eqnarray*}
&& \Bigl[ 1- \alpha_2 \r \sum_{s_2\in\mathrm{supp}(\mu_2)}\mu_2(s_2) b_0(s_2)\Bigr]\cdot
X_{D}^{(1)}(z) +  \Bigl[ \alpha_2 \r_1 \r
\sum_{s_2\in\mathrm{supp}(\mu_2)}\mu_2(s_2) b_1(s_2) \Bigr] \cdot
X_{D}^{(2)}(z) \\
&& \quad -\alpha_2 \r_1 \sum_{s_2\in\mathrm{supp}(\mu_2)} \mu_2(s_2)\,z\,
E^{(2)}\bigl(s_2|\zeta_2(z)\bigr) = P_{D}^{(1)}(z) + \o\bigl((\r-z)^{D+1}\bigr),\\
&&\Bigl[ \alpha_1 \zeta_2(\r) \r \!\! \sum_{s_1\in\mathrm{supp}(\mu_1)}\!\! \mu_1(s_1) a_1(s_1) \Bigr] \cdot X_{D}^{(1)}(z)
+ \Bigl[ 1- \alpha_1 \r \!\! \sum_{s_1\in\mathrm{supp}(\mu_1)}\!\! \mu_1(s_1) a_0(s_1)\Bigr]\cdot
X_{D}^{(2)}(z) \\
&& \quad -\alpha_1  \zeta_2(\r) \sum_{s_1\in\mathrm{supp}(\mu_1)} \mu_1(s_1)\,z\,
E^{(1)}\bigl(s_1|\zeta_1(z)\bigr) = P_{D}^{(2)}(z) + \o\bigl((\r-z)^{D+1}\bigr),
\end{eqnarray*}
where $P_{D}^{(1)}(z)$ and $P_{D}^{(2)}(z)$ are polynomials in the variable
$z$.  By (\ref{equ:xi-1st}), we may conclude that $\bigl(\zeta_i(\r)-\zeta_i(z)\bigr)=\O_c(\r-z)$. 
Since $\zeta_i'(\r_i)<\infty$ by Lemma \ref{lemma:xi_derivatives}, we have for $1<p\in\mathbb{R}$
$$
\bigl(\zeta_i(\r)-\zeta_i(z)\bigr)^p = \bigl(\zeta_i'(\r_i) (\r-z) + \o(\r-z)\bigr)^p = \zeta_i'(\r_i)^p\,(\r-z)^p\,
\bigl(1+\o(1)\bigr)^p = \O_c\bigl((\r-z)^p\bigr)
$$
and
\begin{eqnarray*}
\log\bigl(\zeta_i(\r)-\zeta_i(z)\bigr) &=& \log\bigl( \zeta_i'(\r_i)(\r-z)+\o(\r-z)\bigr)\\
&= & \log\bigl(\zeta_i'(\r_i)\bigr) + \log(\r-z) + \log\bigl(1+\o(1)\bigr) \\
& = & \log\bigl(\zeta_i'(\r_i)\bigr) + \log(\r-z) + \o(1).
\end{eqnarray*}
We remark that $(1+z)^p$ and $\log(1+z)$ are analytic in a neighbourhood of $z=0$.
In the following we denote by $i\in \{1,2\}$ the index such that $S(z)=S_i(z)$. Then, the computations above imply with Lemma \ref{lemma:F-expansion-rest} that
$$
\sum_{s_i\in\mathrm{supp}(\mu_i)} \mu(s_i)\,z\, E^{(i)}\bigl(s_i| \zeta_i(z)\bigr)=\O_c\bigl((\r-z)^{q_i} \log^{k_i} (\r-z)\bigr).
$$
Since the matrix $M$ from the proof of Proposition \ref{prop:xi-expansion}
is invertible, we can conclude analogously that we must have 
$$
X_{D}^{(1)}(z)= \O_c\bigl((\r-z)^{q_i} \log^{k_i} (\r-z)\bigr) \ \textrm{ and } \  
X_{D}^{(2)}(z) =
\O_c\bigl((\r-z)^{q_i} \log^{k_i} (\r-z)\bigr).
$$
Thus, the leading
singular term  of $\zeta_i(z)$ has the same order as the leading
singular term in the expansion of $G_i(z)$ if $S(z)=S_i(z)$. By (\ref{equ:G-Gi-link}),
we can conclude that the leading singular term in the expansion of $G(z)$ at $z=\r$
 has the same form as the leading singular term  in the expansion of $G_i(z)$
 at $z=\r_i$, namely $(\r-z)^{q_i} \log^{k_i} (\r-z)$.
\par
Recall that we assumed throughout this section that $G_i(z)$ is exactly $d_i$-times differentiable at $z=\r_i$.
For an application of \textit{Darboux's method} we need in a first step the expansion of $G(z)$ in a neighbourhood of $z=\r$ up to terms of order $(\r-z)^{D+2}$, where $D=d_1$, if $\bar\theta<\theta_2/\alpha_2$, and $D=\min\{d_1,d_2\}$, if $\bar\theta=\theta_1/\alpha_1=\theta_2/\alpha_2$. Thus, by (\ref{equ:G-Gi-link}), we have to extend the expansions
of $\zeta_1(z)$ and $\zeta_2(z)$ up to terms of order $(\r-z)^{D+2}$. The next
lemma ensures that there are only finitely many terms up to
order $(\r-z)^{D+2}$ in these expansions.
\begin{Lemma}\label{lemma:expansion-higher-order} 
For $i\in\{1,2\}$, $\zeta_i(z)$ has an expansion of the form
$$
\sum_{k=0}^{D} x_k(\r-z)^k + \sum_{(q,k)\in\mathcal{T}} x_{(q,k)} (\r-z)^q
\log^k(\r-z) + \o\bigl((\r-z)^{D+2}\bigr),
$$
where $x_k,x_{(q,k)}\in\mathbb{R}$, $\mathcal{T}$ is a finite subset of
$\widehat{\mathcal{T}}:=\bigl\lbrace (q,k)\in\mathbb{R}\times\N_0 \mid D<q\leq
D+2\bigr\rbrace$. In particular, $(q_i,k_i)\in\mathcal{T}$ with
$x_{(q_i,k_i)}\neq 0$, and $(q,k)\in\mathcal{T}$ implies $(q_i,k_i)\preceq (q,k)$.
\end{Lemma}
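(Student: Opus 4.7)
The plan is to continue the inductive bootstrap from Proposition \ref{prop:xi-expansion} past the smooth Taylor part, but now tracking the singular terms that must appear. Invertibility of the matrix $M$ from the proof of that proposition remains the workhorse; the new issue is that each iteration generates new non-polynomial terms, and we must show these are (i) finite in number and (ii) of the allowed form $(\r-z)^q\log^k(\r-z)$ with $D<q\leq D+2$.

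First I would record how singular terms propagate. Given the expansion of $G_i$ from (\ref{equ:expansion-assumption}) and $G_i(z)=1/(1-U_i(z))$ as exploited in the proof of Lemma \ref{lemma:F-expansion-rest}, each function $\sum_{s_i}\mu_i(s_i)\,z\,F_i(s_i|z)$ decomposes as a degree-$D$ polynomial in $(\r_i-z)$ plus a \emph{finite} sum $\sum_{(q,k)\in\widehat{\mathcal{T}_i}}c_{(q,k)}^{(i)}(\r_i-z)^q\log^k(\r_i-z)$ with $d_i<q\leq d_i+2$, plus a remainder $\O((\r_i-z)^{d_i+2})$. Composing with $\zeta_i(z)$, which by Lemma \ref{lemma:xi_derivatives} satisfies $\zeta_i(\r)-\zeta_i(z)=\zeta_i'(\r)(\r-z)+\mathbf{o}(\r-z)$, turns every $(\r_i-\zeta_i(z))^q\log^k(\r_i-\zeta_i(z))$ into a term $c(\r-z)^q\log^k(\r-z)$ multiplied by a function analytic at $z=\r$ (because $(1+w)^q$ and $\log(1+w)$ are analytic at $w=0$), plus contributions whose polynomial-times-logarithmic structure is of the same type and of strictly larger exponent. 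Since $\zeta_1(\r)=\r_1$ (and $\zeta_2(\r)\leq\r_2$ with equality precisely when $\theta_2/\alpha_2=\bar\theta$), the relevant exponents fall in the window $D<q\leq D+2$, confirming point (ii).

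Next I would iterate the argument of Proposition \ref{prop:xi-expansion} one layer at a time in the scale $(\r-z)^q\log^k(\r-z)$. Starting from the ansatz $\zeta_i(z)=\sum_{k=0}^{D}x_k^{(i)}(\r-z)^k+R_i(z)$ with $R_i(z)=\mathbf{o}((\r-z)^D)$, substitute into (\ref{equ:xi1})–(\ref{equ:xi2}); move all polynomial and already-identified singular terms to the left side; what remains is
\[
M\cdot\begin{pmatrix}R_1(z)\\ R_2(z)\end{pmatrix}=\begin{pmatrix}Q_1(z)\\ Q_2(z)\end{pmatrix},
\]
where each $Q_i(z)$ is a \emph{finite} combination of terms $(\r-z)^q\log^k(\r-z)$ with $D<q\leq D+2$, plus $\mathbf{o}((\r-z)^{D+2})$. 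Inverting $M$ (the determinant argument from Proposition \ref{prop:xi-expansion} applies verbatim) isolates the leading non-polynomial term of $R_i(z)$, which by the preceding paragraph matches the leading singular term of $Q_i(z)$. Feeding this refined approximation back into the system produces the next-leading singular contribution; after finitely many iterations all exponents $q$ saturate $q>D+2$ and the remainder is absorbed into $\mathbf{o}((\r-z)^{D+2})$. This yields a finite set $\mathcal{T}\subseteq\widehat{\mathcal{T}}$, as required.

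Finally, to verify that $(q_i,k_i)\in\mathcal{T}$ with non-zero coefficient $x_{(q_i,k_i)}$, I would observe that in the very first iteration the source term on the right-hand side produced by $\sum_{s_i}\mu_i(s_i)\,z\,E^{(i)}(s_i|\zeta_i(z))$ carries a non-zero contribution of order $\O_c\bigl((\r-z)^{q_i}\log^{k_i}(\r-z)\bigr)$ by Lemma \ref{lemma:F-expansion-rest} combined with the composition calculation above; since $M$ is invertible, this cannot be cancelled in the solution for $R_i$. The principal obstacle I anticipate is bookkeeping the combinatorics of the composition $(\r_i-\zeta_i(z))^q\log^k(\r_i-\zeta_i(z))$ so as to show that it produces \emph{only} terms of the allowed type $(\r-z)^{q'}\log^{k'}(\r-z)$ with $D<q'\leq D+2$ (with no accidental $q'\leq D$ term of non-polynomial character); this is ensured by the analyticity of $(1+w)^q$ and $\log(1+w)$ at $w=0$, but requires care because each iteration can in principle introduce lower-order singular residues which must be shown to be absorbed into the polynomial part.
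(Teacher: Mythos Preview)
Your proposal is essentially the same approach as the paper's: iterate the bootstrap of Proposition~\ref{prop:xi-expansion} beyond the polynomial part, using invertibility of $M$ and the composition formulas for $(\zeta_i(\r)-\zeta_i(z))^p$ and $\log(\zeta_i(\r)-\zeta_i(z))$ to peel off singular terms one by one. The one point where you are vaguer than the paper is the finiteness claim ``after finitely many iterations all exponents $q$ saturate $q>D+2$'': the paper makes this precise by observing that every exponent $\check q$ arising in the iteration is a sum of elements from the fixed finite set $\{1,\,q,\,q-1 : (q,\cdot)\in\mathcal{T}_1\cup\mathcal{T}_2\}$, so only finitely many such sums lie in $(D,D+2]$, and for each such $\check q$ there is a maximal admissible logarithmic power $\check k$; this is exactly the ``bookkeeping'' you flag as the principal obstacle, and it is the missing ingredient to turn your sketch into a proof.
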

\begin{proof}
Recall the expansion of $\sum_{s_i\in\mathrm{supp}(\mu_i)} \mu_i(s_i)\,z\,
E^{(i)}(s_i|z)$ from Lemma \ref{lemma:F-expansion-rest}.
Assume that $\zeta_i(z)$ has already an expansion of the form
\begin{equation}\label{equ:xi-expansion-lemma}
\sum_{k=0}^{D} x_k(\r-z)^k + \sum_{(q,k)\in\mathcal{T}'} x_{(q,k)} (\r-z)^q
\log^k(\r-z) + \o(\max \mathcal{T}'),
\end{equation}
where $\T'$ is a finite subset of $\widehat\T$ and $\max
\mathcal{T}':=\max_{\preceq}\bigl\lbrace (\r-z)^q\log^k(\r-z) \mid
(q,k)\in\mathcal{T}'\bigr\rbrace$. In particular, $x_{(q_i,k_i)}\in\T'$ with $x_{(q_i,k_i)}\neq 0$. We proceed with expanding the next terms of $\zeta_i(z)$
analogously to the proof of Proposition \ref{prop:xi-expansion}. For this purpose, observe that for $p>1$  we can rewrite
$\bigl(\zeta_i(\r)-\zeta_i(z)\bigr)^p$ as
\begin{equation}\label{equ:root-expansion}
(-x_1)^p\,(\r-z)^p\,\ \biggl(1 +\sum_{k=2}^D \frac{x_k}{x_1} (\r-z)^{k-1} +
\sum_{(q,k)\in\mathcal{T}'} \frac{x_{(q,k)}}{x_1} (\r-z)^{q-1} \log^k(\r-z) +
\o\Bigl(\frac{\max\mathcal{T}'}{\r-z}\Bigr)\biggr)^p
\end{equation}
and $\log\bigl(\zeta_i(\r)-\zeta_i(z)\bigr)$ as
\begin{equation}\label{equ:log-expansion}
C + \log(\r-z) + \log \biggl(1+\sum_{k=2}^{D} \frac{x_k}{x_1}(\r-z)^{k-1} + \sum_{(q,k)\in\mathcal{T}'}
\frac{x_{(q,k)}}{x_1} (\r-z)^{q-1} \log^k(\r-z) + \o\Bigl(\frac{\max\mathcal{T}'}{\r-z}\Bigr)\biggr).
\end{equation}
Note that $(1+z)^p$ with $p>1$ and $\log(1+z)$ are analytic in a neighbourhood
of $z=0$. We substitute (\ref{equ:xi-expansion-lemma}),
(\ref{equ:root-expansion}) and (\ref{equ:log-expansion}) in Equations (\ref{equ:xi1}) and
(\ref{equ:xi2}) and compare again the error terms (we will repeat this
procedure in each of the following steps).
Therefore, if $\max\mathcal{T}'=(\r-z)^{\hat  q} \log^{\hat k}(\r-z)$ then the next possible terms up to order $(\r-z)^{\hat q}$ in the expansion may only be
$$
(\r-z)^{\hat q} \log^{\hat k-1}(\r-z), (\r-z)^{\hat q} \log^{\hat k-2}(\r-z),
\dots, (\r-z)^{\hat q}.
$$
Analogously to the proof of Proposition \ref{prop:xi-expansion} we determine
step by step the corresponding coefficients of these terms. The next term in
the expansion of $\zeta_i(z)$ has now the form $(\r-z)^{\check q} \log^{\check
  k}(\r-z)$, where $\check q>\hat q$ is a sum of elements from the finite set 
$$
\bigl\lbrace 1,q,q-1 \mid (q,\cdot)\in\mathcal{T}_1\cup \mathcal{T}_2 \bigr\rbrace
$$
with $\mathcal{T}_i$ given as in (\ref{equ:expansion-assumption}).
The value of $\check q$ is minimal such that $\check q >\hat q$. Due to (\ref{equ:root-expansion}) and (\ref{equ:log-expansion})
there is obviously a maximal $\check k\in\N_0$ such that $(\r-z)^{\check q}
\log^{\check k}(\r-z)$ may be a non-vanishing next term in the expansion of
$\zeta_i(z)$. Thus, we may iterate the last few steps again. Since there are only
finitely many possible values for $q$ such that a term of the form $(\r-z)^{q}
\log^{k}(\r-z)$ may appear in the expansion up to order $(\r-z)^{D+2}$, we have shown that there are only
finitely many terms up to order $(\r-z)^{D+2}$ in the expansion of $\zeta_i(z)$.
\end{proof}
With the last lemma we are now able to prove Theorem \ref{thm:Psi<0}:
\begin{proof}[Proof of Theorem \ref{thm:Psi<0}].
We start by expanding $\zeta_1(z)$ and $\zeta_2(z)$ as in Proposition
\ref{prop:xi-expansion}. If $\alpha_1 > \theta_1/(\theta_1+\theta_2)$ then
$\bar \theta = \theta_1/\alpha_1<\theta_2/\alpha_2$ and $\zeta_1(\r)=\r_1$, $\zeta_2(\r)<\r_2$, and
consequently the leading singular term in the expansion of $\zeta_1(z)$ (and
$\zeta_2(z)$) is then given by the term $S_1(z)=(\r-z)^{q_1} \log^{k_1}(\r-z)$. Analogously, if we have $\bar\theta =
\theta_2/\alpha_2<\theta_1/\alpha_1$, then $\zeta_2(\r)=\r_2$ and
$\zeta_1(\r)<\r_1$, and the leading singular term is then $S_2(z)=(\r-z)^{q_2}
\log^{k_2}(\r-z)$. If $\alpha_1=\theta_1/(\theta_1+\theta_2)$ then $\bar \theta = \theta_1/\alpha_1=\theta_2/\alpha_2$,
$\zeta_1(\r)=\r_1$, $\zeta_2(\r)=\r_2$, and the leading singular term in the
expansions of $\zeta_1(z)$ and $\zeta_2(z)$ is $S_j(z)=(\r-z)^{q_j} \log^{k_j}(\r-z)$,
where $j=1$, if $S_1(z)\preceq S_2(z)$, and $j=2$, if $S_2(z)\prec S_1(z)$.
 For the rest of the proof, we denote by
$i\in\{1,2\}$ the index such that $S(z)=S_i(z)$. Therefore, the expansion of
the common leading singular term of $\zeta_1(z)$ and $\zeta_2(z)$, namely
$S_i(z)$, in a neighbourhood of $0$ has coefficients of asymptotic order
proportional to $\r^{-n} n^{-\lambda_i}\log^{\kappa_i} n$.
\par

We will use the technique which is called \textit{Darboux's method}: recall that the \textit{Riemann-Lebesgue-Lemma} states that
if a function $H(z)=\sum_{n\geq 0} h_nz^n$ has radius of convergence $\r_H$ and
if $H$ is $k$-times continuously differentiable on its circle of convergence, then $h_n\r_H^n n^k\to 0$ as $n\to\infty$. Thus, one identifies all singularities
on the circle of convergence and subtracts parts of the expansion near them
such that the remaining part is sufficiently often differentiable on the circle.
The asymptotics of the coefficients arise then from the main leading singular
terms. We refer to Olver \cite[Chap. 8, \S  9.2]{olver} for more details.
\par
Lemma \ref{lemma:expansion-higher-order} assures that we have a singular expansion of $\zeta_1(z)$ up to terms of order \mbox{$\lceil \lambda_i\rceil=\lceil q_i\rceil +1=D+2$,} which allows us to apply Darboux's method: we get the asymptotic behaviour of $\mu^{(n\delta)}(e)$ by
plugging $\zeta_1(z)$ into Equation (\ref{equ:G-Gi-link}).
Thus, the leading singular term in the expansion of $G(z)$ in a neighbourhood
of $z=\r$ is the same as the one of
$\zeta_1(z)$, namely $(\r -z)^{q_i}\log^{k_i} (\r -z)$. We have to show that the expansion  of $G(z)$ at every singular point on the disc of convergence has the same form. The singularities are exactly the points $\r \exp(\mathrm{i}2\pi j/\delta)$ with $0\leq j < \delta-1$; see e.g. \cite[Theorem 9.4]{woess}. Writing $z=\lambda \r \omega_j$, where $\omega_j=\exp(\mathrm{i}2\pi  j/\delta)$ and $\lambda\in\mathbb{C}$ with $|\lambda|<1$, 
$$
G(z) = G(\lambda \r \omega_j) = \sum_{n\geq 0} \mu^{(n\delta)}(e) (\lambda \r
\omega_j)^{n\delta} = \sum_{n\geq 0} \mu^{(n\delta)}(e) (\lambda
\r)^{n\delta} = G(\lambda\r)=G(z/\omega_j).
$$
Thus, for every $j\in\{0,1,\dots,\delta-1\}$, we have expansions of $G(z)$ in a neighbourhood of $z=\r\omega_j$ given by
$$
G(z)= \sum_{k=0}^{D} g_{k} (\r -z/\omega_j)^k 
+ \sum_{(q,k)\in\widehat{\mathcal{T}}_i} g_{(q,k)} (\r -z/\omega_j)^{q}\log^{k}(\r-z/\omega_j) + \O\bigl((\r \omega_j-z)^{D+2} \bigr),
$$
where  $\widehat{\mathcal{T}}_i$ is a finite subset of
$\{(q,k)\in\mathbb{R}\times\mathbb{N} \mid D<q\leq D+2,q>q_i \lor (q=q_i
\Rightarrow k<k_i)\}$, $g_{(q_i,k_i)}\in\widehat{\mathcal{T}}_i$ with
$g_{(q_i,k_i)}\neq 0$ and $(q,k)\in\widehat{\mathcal{T}}_i$ implies
$(q_i,k_i)\preceq (q,k)$. Therefore, the difference
$$
G(z)- \sum_{j=0}^{\delta-1}  \sum_{(q,k)\in\widehat{\mathcal{T}}_i} g_{(q,k)} (\r -z/\omega_j)^{q}\log^{k}(\r-z/\omega_j)
$$
is $(D+2)$-times differentiable on the circle of convergence. Observe now that the coefficients of the expansion of $(\r -z/\omega_j)^{q_i}\log^{k_i}(\r -z/\omega_j)$ in a neighbourhood of $0$ behave asymptotically like $C\,(\r\omega_j)^{-n}\,n^{-\lambda_i}\,\log^{\kappa_i}(n)$. We can drop higher order terms in the above difference because the corresponding coefficients have higher asymptotic order. Since $G(z)=\sum_{n\geq 0} \mu^{(n)}z^n $, we can conclude that
$$
\mu^{(n)}(e) \sim \sum_{j=0}^{\delta-1} C\, n^{-\lambda_i}\,\log^{\kappa_i}(n)\, \r^{-n}\, \omega_j^{-n}. 
$$
Observe that $\sum_{j=0}^{\delta-1} \omega_j^{-n}=\delta$ if $\delta$ divides $n$, and this sum is zero otherwise.

We note once again that the asymptotic behaviour of the coefficients in
the expansion of the function \mbox{$(\r-z)^{q_i} \log^{k_i}(\r-z)$} near $0$ are well-known; see e.g. Flajolet and \mbox{Sedgewick \cite{flajolet07}.} 
\end{proof}
Let us remark that the reasoning in the above proof shows analogously the
asymptotic behaviour $\mu_i^{(n)}(e_i) \sim \hat C_i\, \r_i^{-n}\, n^{-\lambda_i}\,\log^{\kappa_i} n$. That is, in the presented case of $\Psi(\bar\theta)>0$, $G_1'(\r_1)<\infty$ and $G_2'(\r_2)<\infty$ the asymptotics are directly inherited from the asymptotics of the random walk on $\Gamma_i$ governed by $\mu_i$.

\section{The Case $\Psi(\bar\theta)=0$}

\label{sec:psi=0}

We now consider the case $\Gamma=\Gamma_1\ast\Gamma_2$ and assume that $\Psi(\bar\theta)=0$, $G_1'(\r_1)<\infty$ and \mbox{$G_2'\bigl(\zeta_2(\r)\bigr)<\infty$} hold. W.l.o.g. we may also assume
$\theta =\bar\theta = \theta_1/\alpha_1$. The aim of this section is to prove the following:
\begin{Th}\label{th:psi-bar-theta=0}
Assume that $G_1'(\r_1)<\infty$ and  $G'_2\bigl(\zeta_2(\r_2)\bigl)<\infty$. If $\Psi(\bar\theta)=0$ then
$$
\mu^{(n\delta)}(e) \sim C\cdot  \r^{-n\delta}\cdot n^{-3/2}.
$$
\end{Th}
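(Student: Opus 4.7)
Plan: The heart of the matter is that $\Psi(\bar\theta)=0$ corresponds precisely to a vanishing of the Jacobian of the implicit system that defines $(\zeta_1(z),\zeta_2(z))$ at $z=\r$, which forces the functions $\zeta_i$ --- and hence $G$ --- to pick up a square-root singularity at $\r$ rather than a polynomial expansion. I would mimic the structure of Section \ref{sec:mainresults} but identify precisely where the argument breaks down.

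First, I would show that $\Psi(\bar\theta)=0$ is equivalent to $\det(M)=0$, where $M$ is the $2\times 2$ matrix from the proof of Proposition \ref{prop:xi-expansion}. Using $\zeta_1(\r)=\r_1$ together with (\ref{def:xi}), a short computation gives the clean diagonal entries $m_{11}=\alpha_1\r/\r_1$ and $m_{22}=\alpha_2\r/\zeta_2(\r)$, while the off-diagonal entries reduce to expressions in $G_i'(\zeta_i(\r))$ via (\ref{equ:G-i}). Differentiating (\ref{phi_12}) at $z=\r_i$ ties $\Phi_i'(\theta_i)$ to $G_i'(\r_i)$, and combining this with $\Phi(t)=\Phi_1(\alpha_1 t)+\Phi_2(\alpha_2 t)-1$ translates the condition $\Psi(\theta)=\Phi(\theta)-\theta\Phi'(\theta)=0$ into exactly $\det(M)=0$. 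As a byproduct, differentiating $G(z)=\Phi(zG(z))$ at $z=\r$ shows $G'(\r)=\infty$, immediately ruling out a polynomial expansion and hinting that the leading singular term of $G$ at $\r$ is of order $\sqrt{\r-z}$.

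With the linear system from (\ref{equ:system0}) now singular, one cannot iterate as in Proposition \ref{prop:xi-expansion}. Instead, I would plug the ansatz $\zeta_i(z)=\zeta_i(\r)+c_i\sqrt{\r-z}+\mathbf{o}\bigl(\sqrt{\r-z}\bigr)$ into (\ref{equ:xi1})--(\ref{equ:xi2}) and match coefficients. The vanishing determinant annihilates the linear-in-$(c_1,c_2)$ part, so one is forced to balance terms of order $\r-z$; this yields a quadratic relation between $c_1$ and $c_2$ that pins both of them down (up to a sign) with $c_i\neq 0$. The square-root behaviour then propagates to $G(z)$ through (\ref{equ:G-Gi-link}): since $G_1$ is differentiable at $\r_1=\zeta_1(\r)$ and $G_2$ is differentiable at $\zeta_2(\r)$, the composition yields an expansion $G(z)=A(z)+B(z)\sqrt{\r-z}$ with $A,B$ sufficiently smooth near $\r$ and $B(\r)\neq 0$. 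Darboux's method, applied at each of the $\delta$ singularities $\r\omega_j$ on the circle of convergence exactly as at the end of the proof of Theorem \ref{thm:Psi<0}, then yields $\mu^{(n\delta)}(e)\sim C\cdot\r^{-n\delta}\cdot n^{-3/2}$.

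The main obstacle is verifying $B(\r)\neq 0$, equivalently $c_i\neq 0$. This amounts to a second-order non-degeneracy of the map $t\mapsto t/\Phi(t)$ at $t=\theta$: the derivative satisfies $\Psi_i'(t)=-t\Phi_i''(t)$, which is strictly negative by strict convexity of $\Phi_i$ whenever the second derivatives exist, so $\Psi'(\bar\theta-)<0$ and the quadratic balance in the previous step admits no trivial solution. If $\Phi_i''$ fails to exist at $\theta_i$ one needs more care, but the same line of argument still delivers a $\sqrt{\r-z}$ leading term provided the singularity of $\Phi_i$ at $\theta_i$ is weaker than $(\theta_i-t)^{1/2}$, which is consistent with $G_i'(\r_i)<\infty$. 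A secondary subtlety is the regime $\zeta_2(\r)<\r_2$, in which $G_2$ is Taylor-expandable around $\zeta_2(\r)$; the same analysis applies verbatim and accounts for the extension to the weaker hypothesis $G_2'\bigl(\zeta_2(\r)\bigr)<\infty$ mentioned in the introduction to this section.
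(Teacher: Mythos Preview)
Your strategy is sound but takes a genuinely different route from the paper. You work through the implicit system for $(\zeta_1,\zeta_2)$, identifying $\Psi(\bar\theta)=0$ with $\det(M)=0$ and then balancing at quadratic order to extract the $\sqrt{\r-z}$ coefficients. The paper instead goes directly after $G(z)$: it sets $H(z):=\bigl(G(z)-G(\r)\bigr)^2$, computes $H'(z)$ via $G'(z)=G(z)\Phi'(zG(z))/\bigl(1-z\Phi'(zG(z))\bigr)$, and evaluates $\lim_{z\to\r}H'(z)$ by L'H\^opital. The limit comes out as $-2G(\r)/\bigl(\r^3\Phi''(\theta)\bigr)$, so the whole square-root expansion of $G$ (including the non-vanishing of the coefficient) reduces to the single analytic fact $0<\Phi''(\bar\theta)<\infty$. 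Only afterwards does the paper push the expansion down to $\zeta_i$ and then back up via (\ref{equ:G-Gi-link}) to get enough terms for Darboux. Your approach is more algebraic and stays closer to Section~\ref{sec:mainresults}; the paper's $H$-trick is slicker because it isolates exactly the one quantity that must be controlled.

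The real gap in your outline is the second-order non-degeneracy, and you acknowledge it but do not close it. Your quadratic balance requires the second-order Taylor coefficients of $F_i(s_i|\cdot)$ at $\zeta_i(\r)$ to exist, which amounts to $\Phi''(\bar\theta)<\infty$; and your claim that the quadratic has no trivial solution amounts to $\Phi''(\bar\theta)>0$. Neither follows immediately from $G_i'(\zeta_i(\r))<\infty$: strict convexity of $\Phi_i$ on $[0,\theta_i)$ does not by itself give $\Phi_i''(\theta_i)>0$ at the endpoint, and nothing in the hypotheses bounds $\Phi_i''(\theta_i)$ from above a priori. The paper devotes two separate lemmas to these points. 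Positivity is obtained by showing that $\Phi_i''(\theta_i)=0$ would force $G_i''(\r_i)<\infty$, after which one can compute $\Phi_i''(\theta_i)=G_i(\r_i)^3U_i''(\r_i)/\bigl(G_i(\r_i)+\r_iG_i'(\r_i)\bigr)^3>0$, a contradiction. Finiteness is obtained by a contradiction argument that feeds $\Phi''(\bar\theta)=\infty$ back into the implicit system: one shows the error terms would then be $\o(\sqrt{\r-z})$ but not $\O(\r-z)$, forcing a linear relation among the rows of $M$ that, evaluated at $z=0$, yields the impossible identity $-\r_1+\bigl(C_2^{(1)}/C_2^{(2)}\bigr)\zeta_2(\r)=0$ with $C_2^{(1)}<0$. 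You would need analogues of both arguments to make your quadratic-balance step rigorous.
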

In the following we will derive expansions of $\zeta_i(z)$ and $G(z)$ in a neighbourhood of $z=\r$ in order to prove Theorem \ref{th:psi-bar-theta=0}.
Recall from (\ref{equ:Phi-Psi}) that $\Psi(\bar\theta)=0$ implies 
$$
\Phi'(\bar \theta)=\frac{\Phi(\bar \theta)}{\bar\theta}= \frac{\Phi(\theta)}{\theta} =\frac{\Phi\bigl(\r\,G(\r)\bigr)}{\r\,G(\r)} =\frac{G(\r)}{\r\,G(\r)}=\frac{1}{\r}.
$$
Differentiating (\ref{phi_12}) yields
\begin{equation}\label{g_prime}
 G'(z)=\frac{G(z)\Phi'\bigl(zG(z)\bigr)}{1-z\Phi'\bigl(zG(z)\bigr)}.
\end{equation}
Therefore, $G'(\r)=\infty$, and consequently we have to proceed differently
from the previous section in order to find
the expansion of $G(z)$.
First, we show positivity of $\Phi''(\bar\theta)$ in the present setting:
\begin{Lemma}\label{lemma:phi''<infty}
Assume that $G_1'(\r_1)<\infty$ and $G'_2\bigl(\zeta_2(\r_2)\bigr)$.
If $\Psi(\bar\theta)=0$ then $\Phi''(\bar\theta)>0$.
\end{Lemma}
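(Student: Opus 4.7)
The plan is to use the additive decomposition (\ref{equ:Phi-Psi-formula}). Differentiating twice in $t$ yields $\Phi''(t) = \alpha_1^2 \Phi_1''(\alpha_1 t) + \alpha_2^2 \Phi_2''(\alpha_2 t)$, so positivity of $\Phi''(\bar\theta)$ reduces to that of each $\Phi_i''(\alpha_i \bar\theta)$, interpreted as a left-limit when the argument lies at the boundary $\theta_i$. Under the standing w.l.o.g. assumption $\bar\theta = \theta_1/\alpha_1$ we have $\alpha_1 \bar\theta = \theta_1$, while $\alpha_2 \bar\theta \leq \theta_2$ with equality only when $\theta_1/\alpha_1 = \theta_2/\alpha_2$. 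In either case, the $z$-value corresponding to $w = \alpha_i \bar\theta$ under the change of variable $w = z G_i(z)$ equals $\zeta_i(\r)$.

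The key technical step is an explicit formula for $\Phi_i''$ in terms of $G_i$ and of the first-return generating function $U_i$ introduced in the proof of Lemma 3.3, which satisfies $G_i = 1/(1-U_i)$. Starting from (\ref{phi_12}), setting $w = z G_i(z)$ and applying the chain rule twice to $\Phi_i(w) = G_i(z)$, one computes
\[
\Phi_i''(w) = \frac{G_i(z)\, G_i''(z) - 2\, G_i'(z)^2}{\bigl(G_i(z) + z G_i'(z)\bigr)^3}.
\]
Differentiating $U_i = 1 - 1/G_i$ twice produces the identity $G_i G_i'' - 2(G_i')^2 = U_i''(z)\, G_i(z)^3$, so that
\[
\Phi_i''(w) = \frac{U_i''(z)\, G_i(z)^3}{\bigl(G_i(z) + z G_i'(z)\bigr)^3}.
\]

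At $z = \zeta_i(\r)$ the denominator is finite and strictly positive by the hypotheses $G_1'(\r_1) < \infty$ and $G_2'\bigl(\zeta_2(\r)\bigr) < \infty$ (the latter being automatic when $\zeta_2(\r) < \r_2$ and a genuine assumption when $\zeta_2(\r) = \r_2$), and $G_i(z) > 0$ is immediate. The only remaining task is to check $U_i''(z) > 0$. Since $U_i$ is a probability generating function, its Taylor coefficients are non-negative; and since $\mathrm{supp}(\mu_i)$ generates the non-trivial group $\Gamma_i$ and the walk on $\Gamma_i$ is irreducible, there is some $n \geq 2$ with positive first-return probability, so $U_i''$ is a non-trivial power series with non-negative coefficients. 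Hence $U_i''(z) > 0$ for every $z > 0$ in its domain, with a positive (possibly infinite) left-limit at the boundary. Combining, $\Phi_i''(\alpha_i \bar\theta) > 0$ for $i \in \{1,2\}$, and therefore $\Phi''(\bar\theta) > 0$.

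The main conceptual obstacle is that $\Phi_i$ itself need not have non-negative Taylor coefficients (for simple random walk on $\Z$, e.g., $\Phi(w) = \sqrt{1+w^2}$ has alternating-sign coefficients), so a direct coefficient-positivity argument applied to $\Phi_i''$ fails. The resolution is to pass through $U_i$, whose coefficients are genuine probabilities, and to exploit the identity $G_i G_i'' - 2(G_i')^2 = U_i''\, G_i^3$; this turns the positivity of $\Phi_i''$ into the transparent positivity of $U_i''$ multiplied by an obviously positive factor.
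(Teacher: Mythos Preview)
Your proof is correct and rests on the same identity the paper uses, namely
\[
\Phi_i''(w) \;=\; \frac{U_i''(z)\, G_i(z)^3}{\bigl(G_i(z) + z G_i'(z)\bigr)^3}, \qquad w = z G_i(z),
\]
obtained from $G_i = \Phi_i(zG_i)$ together with $G_i = 1/(1-U_i)$. The organisation differs, however. The paper first disposes of the case $\theta_1/\alpha_1 \neq \theta_2/\alpha_2$ by quoting the strict convexity of $\Phi_2$ on the open interval $[0,\theta_2)$, so that $\Phi_2''(\alpha_2\bar\theta) > 0$ immediately; only in the boundary case $\theta_1/\alpha_1 = \theta_2/\alpha_2$ does it invoke the $U_i''$ formula, and there it argues by contradiction: assuming $\Phi_i''(\theta_i)=0$, it first deduces $G_i''(\r_i)<\infty$, hence $U_i''(\r_i)<\infty$, and only then evaluates the formula to reach $\Phi_i''(\theta_i)>0$. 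Your route is more uniform and more direct: you apply the formula at $z=\zeta_i(\r)$ for both factors, note that the denominator is finite and positive by the standing hypotheses on $G_i'$, and use that $U_i''$ is strictly positive (possibly infinite as a left-limit) because it is a power series with non-negative coefficients and at least one positive coefficient of index $\geq 2$. This bypasses both the case split and the intermediate finiteness of $G_i''$, at the modest price of using the formula in the interior case where plain convexity would have sufficed. Your closing remark on why a coefficient-positivity argument for $\Phi_i''$ fails (with the $\sqrt{1+w^2}$ example) is a nice clarification not present in the paper.
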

\begin{proof}
Differentiating (\ref{equ:Phi-Psi-formula}) twice yields
\begin{equation}\label{equ:phi''}
\Phi''(\bar\theta)=\alpha_1^2 \Phi_1''(\alpha_1\bar\theta) + \alpha_2^2 \Phi_2''(\alpha_2\bar\theta).
\end{equation}
Since $\Phi_1(t)$ and $\Phi_2(t)$ are strictly convex for $t\in[0,\theta_1)$ and $t\in[0,\theta_2)$ respectively, we get $\Phi''(\bar\theta)>0$ whenever $\theta_1/\alpha_1\neq \theta_2/\alpha_2$: if $\bar\theta=\theta_1/\alpha_1< \theta_2/\alpha_2$ then $\alpha_2\bar\theta<\theta_2$, that is, $\Phi_2''(\alpha_2\bar\theta)>0$. \\
We now consider the case $\theta_1/\alpha_1= \theta_2/\alpha_2$, that is, $\zeta_2(\r)=\r_2 $. Assume now $\Phi''(\bar\theta)=0$. Then $\Phi_1''(\theta_1)=\lim_{t\to\theta_1-}\Phi_1''(t)=0$ and 
$\Phi_2''(\theta_2)=\lim_{t\to\theta_2-}\Phi_2''(t)=0$ must hold. For $i\in\{1,2\}$, differentiating (\ref{phi_12}) yields
$$
G_i'(\r_i)=\lim_{z\to\r_i}\frac{G_i(z) \Phi_i'\bigl(zG_i(z)\bigr)}{1-z\Phi_i'\bigl(zG_i(z)\bigr)},
$$
or equivalently
$$
\Phi_i'(\theta_i)=\lim_{z\to\r_i} \frac{G_i'(z)}{z G_i'(z)+ G_i(z)}=\frac{G_i'(\r_i)}{\r_i G_i'(\r_i)+ G_i(\r_i)}<\infty.
$$
In particular, we have $\Phi_i'(\theta_i)<1/\r_i$ since $G_i'(\r_i)<\infty$ by assumption.
If $\Phi_i''(\theta_i)=0$, differentiating (\ref{phi_12}) twice yields
$$
G_i''(\r_i)=\lim_{z\to\r_i}\frac{\Phi_i''\bigl(zG_i(z)\bigr) \bigl(G_i(z) +zG_i'(z)\bigr)^2 + 2\Phi_i'\bigl(zG_i(z)\bigr) G_i(z)}{1-z\Phi_i'\bigl(zG_i(z)\bigr)}
= \frac{2\Phi_i'(\theta_i) G_i(\r_i)}{1-\r_i \Phi_i'(\theta_i)}<\infty.
$$
Define the first return generating function as 
\[
U_i(z):=\sum_{n\geq 1} \Prob\bigl[X_n^{(i)}=e_i,\forall m\in\{1,\dots,n\}: X_m^{(i)}\neq e_i\mid X_0^{(i)}=e_i\bigr]\,z^n,
\]
which satisfies the well-known equation $G_i(z)=1/\bigl(1-U_i(z)\bigr)$ and is strictly convex. $G_i''(\r_i)<\infty$ implies obviously $U_i''(\r_i)<\infty$.
Therefore, we can compute $\Phi_i''(\theta_i)$ as
$$
\Phi_i''(\theta_i)=\lim_{z\to\r_i} \frac{G_i(z)^3 U_i''(z)}{\bigl(G_i(z)+zG_i'(z)\bigr)^3}=\frac{G_i(\r_i)^3 U_i''(\r_i)}{\bigl(G_i(\r_i)+\r_iG_i'(\r_i)\bigr)^3}>0,
$$
a contradiction, and consequently $\Phi''(\bar\theta)>0$ due to (\ref{equ:phi''}).
\end{proof}
We proceed with expanding $G(z)$ nearby $z=\r$.
\begin{Prop}\label{prop:Psi=0}
Assume that $\Phi''(\bar{\theta}) <\infty $, $\Psi(\bar\theta)=0$, $G_1'(\r_1
)<\infty $ and $G_2'(\zeta_2(\r )) <\infty $ hold. Then we can expand $G(z)$ in a neighbourhood of $z=\r$ as follows:
$$
 G(z) =  g_0 + g_1 \sqrt{\r-z} + \mathbf{o}\bigl(\sqrt{\r-z}\bigr),
$$
where $g_0,g_1\in\mathbb{R}$ with $g_1\neq 0$.
\end{Prop}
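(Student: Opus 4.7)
The plan is to exploit the functional equation $G(z)=\Phi(zG(z))$ from (\ref{phi_12}). Setting $w(z):=zG(z)$, this reads $w=z\,\Phi(w)$, so $(z,w)=(\r,\bar\theta)$ is a solution. The hypothesis $\Psi(\bar\theta)=0$ is exactly $\Phi'(\bar\theta)=\Phi(\bar\theta)/\bar\theta=1/\r$, hence $\partial_w\bigl[w-z\Phi(w)\bigr]=1-z\Phi'(w)$ vanishes at $(\r,\bar\theta)$. The classical implicit function theorem therefore fails in the $w$-direction, and one expects a square-root branch singularity of the familiar Pringsheim--Flajolet type.

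To extract it quantitatively, I Taylor-expand $\Phi$ to order two at $\bar\theta$; this is legitimate because $\Phi''(\bar\theta)<\infty$ is assumed and $\Phi''(\bar\theta)>0$ by Lemma \ref{lemma:phi''<infty}. Plugging
\[
\Phi(w)=G(\r)+\tfrac{1}{\r}(w-\bar\theta)+\tfrac{\Phi''(\bar\theta)}{2}(w-\bar\theta)^2+\o\bigl((w-\bar\theta)^2\bigr)
\]
into $w=z\Phi(w)$ and using $\r G(\r)=\bar\theta$ to cancel constants yields
\[
\frac{\r-z}{\r}(w-\bar\theta)=-G(\r)(\r-z)+\frac{z\,\Phi''(\bar\theta)}{2}(w-\bar\theta)^2+\o\bigl((w-\bar\theta)^2\bigr).
\]

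Since $w(z)=zG(z)$ is a power series with non-negative coefficients, it is strictly increasing on $[0,\r)$, hence $w-\bar\theta<0$ for $z<\r$. A dominant-balance argument then forces $(w-\bar\theta)^2$ and $\r-z$ to be of the same order: the left-hand side is of order $(\r-z)|w-\bar\theta|$, hence strictly smaller than either right-hand term, so passing to the limit $z\to\r^-$ gives
\[
\lim_{z\to\r^-}\frac{(w-\bar\theta)^2}{\r-z}=\frac{2G(\r)}{\r\,\Phi''(\bar\theta)}>0,
\]
so that $w(z)-\bar\theta=-\sqrt{2G(\r)/\bigl(\r\Phi''(\bar\theta)\bigr)}\,\sqrt{\r-z}+\o\bigl(\sqrt{\r-z}\bigr)$. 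Dividing by $z$ and using that $1/z$ is analytic at $z=\r$ produces $G(z)=w(z)/z=G(\r)+\tfrac{1}{\r}(w(z)-\bar\theta)+\o\bigl(\sqrt{\r-z}\bigr)$, which is the asserted expansion with $g_0=G(\r)$ and $g_1=-\tfrac{1}{\r}\sqrt{2G(\r)/(\r\Phi''(\bar\theta))}\neq 0$.

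The main delicate point is the bootstrap making the dominant balance rigorous: one first uses $G(\r)<\infty$, which holds because $\Psi(\bar\theta)\geq 0$ (cf.\ \cite[Theorem 9.22]{woess}), to conclude $w-\bar\theta\to 0$, and only then upgrades this \emph{a priori} bound to $w-\bar\theta=\O_c(\sqrt{\r-z})$ through the balance equation. This upgrade confirms that the $\o\bigl((w-\bar\theta)^2\bigr)$ remainder contributes only $\o(\r-z)$ to the leading identity and therefore does not perturb the leading coefficient.
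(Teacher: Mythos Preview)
Your argument is correct and reaches the same constant $g_1=-\tfrac{1}{\r}\sqrt{2G(\r)/(\r\Phi''(\bar\theta))}$ as the paper, but the route is genuinely different. The paper introduces the auxiliary function $H(z)=(G(z)-G(\r))^2$, computes $H'(z)$ via the identity $G'(z)=G(z)\Phi'(zG(z))/(1-z\Phi'(zG(z)))$, and evaluates $\lim_{z\to\r}H'(z)$ by L'H\^opital; the limit $(G(\r)-G(z))/\sqrt{\r-z}\to\sqrt{-H'(\r)}$ then gives the expansion directly. Your approach instead Taylor-expands $\Phi$ at $\bar\theta$ and extracts the leading coefficient by dominant balance in $w=z\Phi(w)$, which is exactly the Flajolet--Sedgewick smooth implicit-function schema; the paper itself notes at the end of Section~\ref{sec:psi=0} that this alternative is available. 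The trade-off: the $H(z)$ route avoids justifying a second-order Taylor expansion of $\Phi$ at the boundary point $\bar\theta$ (where $\Phi''(\bar\theta)$ is only known as a left limit), while your route is closer to the standard analytic-combinatorics template and gives the coefficient more transparently. Your bootstrap paragraph is the right caveat---one should rule out $(w-\bar\theta)^2/(\r-z)\to 0$ (which would force $G(\r)=0$) and $\to\infty$ (which would force $\Phi''(\bar\theta)=0$) to make the balance rigorous; both contradictions use Lemma~\ref{lemma:phi''<infty}.
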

\begin{proof}
Consider the auxiliary function $H(z):=\bigl( G(z)-G(\r)\bigr)^2$,
and its first derivative
$
 H'(z)=2G'(z)\bigl(G(z)-G(\r)\bigr)
$.
Using Equation (\ref{g_prime}), we get
\[
 H'(z)=2\frac{G(z)\Phi'\bigl(zG(z)\bigr)}{1-z\Phi'\bigl(zG(z)\bigr)} \bigl(G(z)-G(\r)\bigr).
\]
The next aim is to show differentiability of $H(z)$ at $z=\r$. For this purpose,
we want to show finiteness of the following limit:
\[
 \lim_{z\to \r}H'(z)=\lim_{z\to \r} 2G(z)\Phi'\bigl(zG(z)\bigr)\frac{G(z)-G(\r)}{1-z\Phi'\bigl(zG(z)\bigr)}.
\]
Since $2G(z)\Phi'\bigl(zG(z)\bigr)$ tends to
$A:=2G(\r)/\r<\infty$, we just look at the following 
limit:
\begin{eqnarray}\label{limit}
 \lim_{z\to \r}  \frac{G(z)-G(\r)}{1-z\Phi'\bigl(zG(z)\bigr)} & = & \lim_{z\to
   \r} \frac{\Phi\bigl(zG(z)\bigr)-G(\r)}{1-z\Phi'\bigl(zG(z)\bigr)}\nonumber\\
& = &\lim_{z\to \r} \, \frac{\Phi'\bigl(zG(z)\bigr)\bigl(G(z)+zG'(z)\bigr)}{-\Phi'\bigl(zG(z)\bigr)-z\Phi''\bigl(zG(z)\bigr)\bigl(G(z)+zG'(z)\bigr)}.
\end{eqnarray}
In the last equation we applied De L'H\^opital's rule.
We now write $\mathcal{G}(z):=G(z)+zG'(z)$, which tends to infinity for $z\to\r$. Recall that $\bar{\theta}=\theta =\r G(\r )$ if $\Psi (\bar{\theta})=0 $.
Therefore, Equation (\ref{limit})  
yields
$$
H'(\r)\!=\! \lim_{z\to \r} \frac{A \Phi'(\theta)
   \mathcal{G}(z)}{-\Phi'(\theta)-\r\Phi''(\theta) \mathcal{G}(z)}\! =\!
\lim_{x\to \infty} \frac{A \Phi'(\theta)x}{-\Phi'(\theta)-\r\Phi''(\theta) x}=\frac{A}{-\r^2\Phi''(\theta)} \in (-\infty,0).
$$
Thus,
\[
\lim_{z\to \r} \frac{G(\r)-G(z)}{\sqrt{\r-z}} = \lim_{z\to \r} \sqrt{\frac{\bigl(G(z)-G(\r)\bigr)^2}{\r-z}}=\sqrt{-H'(\r)}\in (0,\infty)
\]
leads to the proposed expansion, namely
$$
G(z) = G(\r)-\sqrt{-H'(\r)}\sqrt{\r-z}+\mathbf{o}(\sqrt{\r-z}),
$$
where $\sqrt{-H'(\r)}\neq 0$. 
\end{proof}
The next lemma shows that also $\zeta_1(z)$ and $\zeta_2(z)$ have the same expansion type:
\begin{Lemma}\label{lemma:xi-i-psi=0}
Assume $\Phi''(\bar{\theta})<\infty $. If  $\Psi(\bar\theta)=0$, $G_1'(\r_1)<\infty $ and $G_2'(\zeta_2(\r)) <\infty $ we can expand $\zeta_1(z)$ and $\zeta_2(z)$ in a neighbourhood of $z=\r$ in the following way:
$$
\zeta_1(z)=\r_1+ a_0 \sqrt{\r-z} +\o(\sqrt{\r-z}) ,\quad \quad \zeta_2(z)=\zeta_2(\r)+b_0 \sqrt{\r-z}+\o(\sqrt{\r-z}),
$$
where $a_0,b_0\in\mathbb{R}\setminus\{0\}$.
\end{Lemma}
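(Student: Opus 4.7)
The plan is to derive the expansions of $\zeta_1(z)$ and $\zeta_2(z)$ by inverting the fundamental relation (\ref{equ:G-Gi-link}), combined with the square-root expansion of $G(z)$ already established in Proposition~\ref{prop:Psi=0}. From (\ref{equ:G-Gi-link}) we have the key identity
$$
\alpha_i\,z\,G(z) = \zeta_i(z)\,G_i\bigl(\zeta_i(z)\bigr), \qquad i\in\{1,2\}.
$$
Set $H_i(w):=w\,G_i(w)$. Note $\zeta_1(\r)=\r_1$ and $\zeta_2(\r)\leq \r_2$, and that under the hypotheses $G_1'(\r_1)<\infty$ and $G_2'\bigl(\zeta_2(\r)\bigr)<\infty$ the function $H_i$ is differentiable at $w=\zeta_i(\r)$ with
$$
H_i'\bigl(\zeta_i(\r)\bigr) = G_i\bigl(\zeta_i(\r)\bigr) + \zeta_i(\r)\,G_i'\bigl(\zeta_i(\r)\bigr) > 0.
$$
Hence $H_i$ admits a local inverse near $\zeta_i(\r)$ which is itself differentiable at the point $H_i\bigl(\zeta_i(\r)\bigr) = \alpha_i\,\r\,G(\r)$.

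Next I would expand the left hand side. By Proposition~\ref{prop:Psi=0}, $G(z) = g_0 + g_1\sqrt{\r-z} + \o\bigl(\sqrt{\r-z}\bigr)$ with $g_1\neq 0$. Writing $z=\r-(\r-z)$ and absorbing the linear term $(\r-z)$ into $\o\bigl(\sqrt{\r-z}\bigr)$, this gives
$$
\alpha_i\,z\,G(z) = \alpha_i\,\r\,G(\r) + \alpha_i\,\r\,g_1\,\sqrt{\r-z} + \o\bigl(\sqrt{\r-z}\bigr).
$$
Applying the local inverse $H_i^{-1}$ (which is differentiable at $\alpha_i\,\r\,G(\r)$) to both sides of the key identity then yields
$$
\zeta_i(z) = \zeta_i(\r) + \frac{\alpha_i\,\r\,g_1}{H_i'\bigl(\zeta_i(\r)\bigr)}\,\sqrt{\r-z} + \o\bigl(\sqrt{\r-z}\bigr),
$$
so that $a_0 = \alpha_1\r g_1/H_1'(\r_1)$ and $b_0 = \alpha_2\r g_1/H_2'\bigl(\zeta_2(\r)\bigr)$, both of which are nonzero because $g_1\neq 0$ and $H_i'\bigl(\zeta_i(\r)\bigr)>0$.

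The main obstacle is verifying that $H_i$ really is locally invertible in the relevant sense. For $i=1$ this is immediate from $G_1'(\r_1)<\infty$. For $i=2$ one must distinguish two subcases: if $\zeta_2(\r)<\r_2$ then $G_2$ is analytic at $\zeta_2(\r)$ and $H_2$ is analytic and strictly increasing there; if $\zeta_2(\r)=\r_2$ (which, given $\bar\theta=\theta_1/\alpha_1$, occurs exactly when $\theta_1/\alpha_1=\theta_2/\alpha_2$) then the assumption $G_2'\bigl(\zeta_2(\r)\bigr)=G_2'(\r_2)<\infty$ is precisely what is needed for $H_2'(\r_2)$ to exist and be positive, making the one-sided local inverse on the appropriate side of $\zeta_2(\r)$ differentiable. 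Once this invertibility issue is settled, the inversion argument above produces both expansions simultaneously and the explicit formulas for $a_0,b_0$ show they are nonzero, completing the proof.
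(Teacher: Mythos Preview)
Your proposal is correct and follows essentially the same route as the paper. Both arguments exploit the identity $\alpha_i z G(z)=\zeta_i(z)G_i\bigl(\zeta_i(z)\bigr)$ from (\ref{equ:G-Gi-link}) together with the expansion of $G(z)$ from Proposition~\ref{prop:Psi=0}; the paper writes $\zeta_i(z)=\zeta_i(\r)+X_i(z)$, expands $G_i\bigl(\zeta_i(z)\bigr)$ to first order in $X_i(z)$, substitutes, and compares error terms, which is exactly the linearization underlying your inversion of $H_i(w)=wG_i(w)$. Your explicit formulas $a_0=\alpha_1\r g_1/H_1'(\r_1)$ and $b_0=\alpha_2\r g_1/H_2'\bigl(\zeta_2(\r)\bigr)$ make the nonvanishing transparent, whereas the paper leaves this implicit in ``comparing all error terms''; conversely, your framing via a local inverse is slightly heavier than needed, since only the first-order Taylor expansion of $H_i$ with nonzero leading coefficient is actually used.
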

\begin{proof}
Obviously, we can write
\begin{equation}\label{equ:xi-expans1}
\zeta_1(z)=\r_1+X_1(z),\quad \quad \zeta_2(z)=\zeta_2(\r)+X_2(z),
\end{equation}
where $X_1(\r)=X_2(\r)=0$.
Moreover, for $i\in\{1,2\}$,
\begin{equation}\label{equ:Gi-expans1}
 G_i\bigl(\zeta_i(z)\bigr)=G_i\bigl(\zeta_i(\r)\bigr) -
 G_i'\bigl(\zeta_i(\r)\bigr)\bigl(-X_i(z)\bigr)+\o\bigl(X_i(z)\bigr).
\end{equation}
Substituting (\ref{equ:xi-expans1}) and
(\ref{equ:Gi-expans1}) in (\ref{equ:G-Gi-link})
yields the claim when comparing all error terms.
\end{proof}
Now we can show that $\Phi''(\bar{\theta}) <\infty $ holds in the present setting:
\begin{Lemma}
Assume $G_1'(\r_1)< \infty$ and $ G_1(\zeta_2(\r))<\infty $. If $\Psi(\bar{\theta})=0$ then $ \Phi'' (\bar{\theta})<\infty $.
\end{Lemma}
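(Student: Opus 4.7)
The plan is to decompose $\Phi''(\bar\theta)$ into factor-wise contributions and control each one separately. Twice differentiating the identity in (\ref{equ:Phi-Psi-formula}) yields
\[
 \Phi''(\bar\theta)=\alpha_1^{2}\,\Phi_1''(\alpha_1\bar\theta)+\alpha_2^{2}\,\Phi_2''(\alpha_2\bar\theta),
\]
and since we have assumed $\bar\theta=\theta_1/\alpha_1$, we have $\alpha_1\bar\theta=\theta_1$, while $\alpha_2\bar\theta\le\theta_2$ with equality iff $\zeta_2(\r)=\r_2$. In the easy subcase $\alpha_2\bar\theta<\theta_2$, the point $\alpha_2\bar\theta$ lies strictly inside the analyticity domain $[0,\theta_2)$ of $\Phi_2$, so $\Phi_2''(\alpha_2\bar\theta)<\infty$ is automatic. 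Otherwise $\zeta_2(\r)=\r_2$, the hypothesis $G_2'(\zeta_2(\r))<\infty$ reads $G_2'(\r_2)<\infty$, and the bound on $\Phi_2''(\theta_2)$ is obtained by exactly the same argument I will apply to $\Phi_1''(\theta_1)$. Everything therefore reduces to the statement: whenever $\zeta_i(\r)=\r_i$ and $G_i'(\r_i)<\infty$, the quantity $\Phi_i''(\theta_i)$ is finite.

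For this core reduction I would reuse the computation from the proof of Lemma~\ref{lemma:phi''<infty}. Differentiating $G_i(z)=\Phi_i(zG_i(z))$ once and evaluating at $z=\r_i$ gives
\[
 \Phi_i'(\theta_i)=\frac{G_i'(\r_i)}{G_i(\r_i)+\r_i\,G_i'(\r_i)}<\frac{1}{\r_i},
\]
so in particular $1-\r_i\,\Phi_i'(\theta_i)>0$. Differentiating once more, isolating the term with $\Phi_i''$ and dividing out by $1-\r_i\,\Phi_i'(\theta_i)$, one obtains the algebraic identity
\[
 \Phi_i''(\theta_i)=\frac{G_i''(\r_i)\,G_i(\r_i)-2\,G_i'(\r_i)^{2}}{\bigl(G_i(\r_i)+\r_i\,G_i'(\r_i)\bigr)^{3}}.
\]
Thus $\Phi_i''(\theta_i)<\infty$ is equivalent to $G_i''(\r_i)<\infty$.

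The main obstacle is therefore to deduce $G_i''(\r_i)<\infty$ from the hypotheses. I would appeal to the basic expansion (\ref{equ:expansion-assumption}): $G_i'(\r_i)<\infty$ forces $d_i\ge 1$, and what has to be excluded is the degenerate configuration in which the leading singular exponent $q_i$ lies in $(1,2)$, since that would send $G_i''(\r_i)$ to infinity. I would rule this out by substituting the candidate singular behaviour of $G_i$ back into (\ref{equ:G-Gi-link}) and using the fixed-point equations (\ref{equ:xi1})--(\ref{equ:xi2}) together with $\Psi(\bar\theta)=0$: the resulting expansion of $G$ near $z=\r$ would be incompatible with the equality $\Phi'(\bar\theta)=1/\r$ forced by $\Psi(\bar\theta)=0$. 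Once the degenerate subcase is eliminated, $G_i''(\r_i)<\infty$ follows, hence $\Phi_i''(\theta_i)<\infty$, and combining this with the easy subcase for $\Phi_2$ via the decomposition above delivers $\Phi''(\bar\theta)<\infty$, as required.
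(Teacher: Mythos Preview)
Your reduction to the factor-wise statement $\Phi_i''(\theta_i)<\infty$ is correct as far as it goes, and your algebraic identity relating $\Phi_i''(\theta_i)$ to $G_i''(\r_i)$ is right: under $G_i'(\r_i)<\infty$ one has $1-\r_i\Phi_i'(\theta_i)>0$, and then $\Phi_i''(\theta_i)<\infty$ is \emph{equivalent} to $G_i''(\r_i)<\infty$. The gap is in your final step. You cannot rule out $q_i\in(1,2)$: take $\Gamma_1=\Z^5$, where by Proposition~\ref{prop:expansion-Gd} the leading singular exponent is $q_1=3/2$, so $G_1'(\r_1)<\infty$ but $G_1''(\r_1)=\infty$ and hence $\Phi_1''(\theta_1)=\infty$. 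The paper's own Section~\ref{sec:phase-transition} (Cases A and F) exhibits explicit choices of $\mu_1,\mu_2,\alpha_1$ on $\Z^5\ast\Z^{d_2}$ for which $\Psi(\bar\theta)=0$ with $\bar\theta=\theta_1/\alpha_1$. So the configuration you are trying to exclude is \emph{realised} within the class of examples the paper treats, and your sketch (``substituting back \ldots would be incompatible with $\Phi'(\bar\theta)=1/\r$'') cannot be completed: there is nothing in the hypotheses that forces $G_i''(\r_i)<\infty$.

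The paper's proof takes an entirely different route and never attempts to bound $\Phi_i''(\theta_i)$ or $G_i''(\r_i)$ individually. It argues by contradiction: assuming $\Phi''(\bar\theta)=\infty$, the computation in Proposition~\ref{prop:Psi=0} gives $H'(\r)=0$, whence the error terms $X_i(z)=\zeta_i(z)-\zeta_i(\r)$ satisfy $X_i(z)=\o(\sqrt{\r-z})$ but $X_i(z)\neq\O(\r-z)$ (the latter because $G'(\r)=\infty$). Substituting into (\ref{equ:xi1})--(\ref{equ:xi2}) produces a $2\times 2$ linear system $C^{(i)}_1 X_1+C^{(i)}_2 X_2+\o(\r-z)=\mathrm{LP}_i$; invertibility of the coefficient matrix would force $X_i=\O(\r-z)$, so the determinant must vanish. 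That singularity forces a linear relation between the polynomials $\mathrm{LP}_1,\mathrm{LP}_2$, which when evaluated at $z=0$ yields the impossible equation $-\r_1+(C_2^{(1)}/C_2^{(2)})\,\zeta_2(\r)=0$ with $C_2^{(1)}<0$. This argument works at the level of the coupled system for $\zeta_1,\zeta_2$ and does not require any control on second derivatives of the individual $G_i$ or $\Phi_i$.
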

\begin{proof}
Assume now that $\Phi''(\bar\theta)=\infty$. We rewrite $\zeta_1(z) $ and $\zeta_2(z)$ as
\begin{equation}\label{equ:xi-i-psi=0}
\zeta_1(z)=\r_1+X_1(z),\quad \textrm{ and }\quad \zeta_2(z)=\zeta_2(\r)+X_2(z),
\end{equation}
with $X_1(\r)=X_2(\r )=0 $. More precisely, if $\Phi''(\bar\theta)=\infty$, then  the reasoning in Proposition \ref{prop:Psi=0} yields $H'(\r)=0 $, and consequently $X_1(z),X_2(z)= \o(\sqrt{\r-z})$. Furthermore, $X_1(z),X_2(z)\neq \O\bigl((\r-z)\bigr)$, because otherwise $\zeta_1'(\r),\zeta_2'(\r)<\infty $ together with (\ref{equ:G-Gi-link}) would lead to a contradiction with $G'(\r) =\infty $.
For $i\in\{1,2\}$ and $s_i\in\mathrm{supp}(\mu_i)$, we write in the following $F_i(s_i|z)=\sum_{n\geq  1}f_n^{(i)}(s_i)z^n$  with suitable coefficients $f_n^{(i)}(s_i)\in\mathbb{R}$. Our next aim is to find real numbers $C_1^{(i)}$ and $C_2^{(i)}$ such that
\begin{equation}\label{first_step}
 C_1^{(i)} X_1(z)+C_2^{(i)} X_2(z)+\o(\r-z)=\operatorname{LP}_i,
\end{equation}
where $\operatorname{LP}_i$ is a linear polynomial. For this purpose, we
rewrite Equations (\ref{equ:xi1}) and (\ref{equ:xi2}) with the help of
(\ref{equ:xi-i-psi=0}). In the following denote by $j$ the element of $\{1,2\}$ which is
different from $i$. We get:
\begin{equation}\label{equ:psi0-equation}
 \Big (1-\alpha_j (\r\!-\!(\r\!-\!z)) \sum_{s_j\in\mathrm{supp}(\mu_j)}\mu_j(s_j)\sum_{n\geq 1}f_n^{(j)}(s_j)\left (\zeta_j(\r)+X_j(z)\Big )^n\right )\! \bigl(\zeta_i(\r)+X_i(z)\bigr) = \alpha_i z .
\end{equation}
The coefficients $C_1^{(i)}$ and $C_2^{(i)}$ of $X_1(z)$ and $X_2(z)$ respectively, are
\begin{eqnarray*}
 C_1^{(1)} &:= &1-\alpha_2 \r
 \sum_{s_2\in\mathrm{supp}(\mu_2)}\mu_2(s_2)\sum_{n\geq
   1}f_n^{(2)}(s_2)\,\zeta_2(\r)^n\\
&=& 1-\alpha_2 \r \sum_{s_2\in\mathrm{supp}(\mu_2)}\mu_2(s_2)F_2\bigl(s_2|\zeta_2(\r)\bigr),\\
C_2^{(1)} &:= & -\alpha_2 \r_1 \r \sum_{s_2\in\mathrm{supp}(\mu_2)}\mu_2(s_2)\sum_{n\geq 1}f_n^{(2)}(s_2)n\, \zeta_2(\r)^{n-1}\\
&=& -\alpha_2 \r_1 \r \sum_{s_2\in\mathrm{supp}(\mu_2)}\mu_2(s_2) F_2' \bigl(s_2|\zeta_2(\r)\bigr),\\
 C_1^{(2)} &:= &  -\alpha_1 \zeta_2(\r) \r
 \sum_{s_1\in\mathrm{supp}(\mu_1)}\mu_1(s_1)\sum_{n\geq 1}f_n^{(1)}(s_1)n
 \r_1^{n-1}\\
&=& -\alpha_1 \zeta_2(\r) \r \sum_{s_1\in\mathrm{supp}(\mu_1)}\mu_1(s_1) F_1' \bigl(s_1|\r_1\bigr),\\
C_2^{(2)} &:= & 1-\alpha_1 \r \sum_{s_1\in\mathrm{supp}(\mu_1)}\mu_1(s_1)\sum_{n\geq 1}f_n^{(1)}(s_1)\r_1^n \\
& = & 1-\alpha_1 \r \sum_{s_1\in\mathrm{supp}(\mu_1)}\mu_1 (s_1)F_1(s_1|\r_1).
\end{eqnarray*}
For $i=1$, the linear polynomial term on the left hand side of (\ref{equ:psi0-equation}) is
\[
 \r_1\Bigl( 1-\alpha_2 z \sum_{s_2\in\mathrm{supp}(\mu_2)}\mu_2(s_2) F_2\bigl(s_2|\zeta_2(\r)\bigr)\Bigr),
\]
while on the right hand side it is $\alpha_1 z$. For $i=2$, we have on the left
hand side of (\ref{equ:psi0-equation})
$$
\zeta_2(\r) \Bigl( 1-\alpha_1 z \sum_{s_1\in\mathrm{supp}(\mu_1)}\mu_1(s_1) F_1(s_1|\r_1)\Bigr),
$$
and on the right hand side $\alpha_2 z$.
Therefore, (\ref{first_step}) holds with
\begin{eqnarray*}
 \operatorname{LP}_1 & := & \alpha_1 z-\r_1\Bigl( 1-\alpha_2 z \sum_{s_2\in\mathrm{supp}(\mu_2)}\mu_2(s_2)F_2\bigl(s_2|\zeta_2(\r)\bigr)\Bigr)\
 \textrm{ and }\\
 \operatorname{LP}_2 & := & \alpha_2 z-\zeta_2(\r) \Bigl( 1-\alpha_1 z \sum_{s_1\in\mathrm{supp}(\mu_1)}\mu_1(s_1) F_1(s_1|\r_1)\Bigr).
\end{eqnarray*}
The coefficients $C_1^{(i)}, C_2^{(i)}$ satisfy 
\begin{equation}\label{det=0}
C_1^{(1)} C_2^{(2)} - C_1^{(2)} C_2^{(1)} = 0.
\end{equation}
Indeed, assume that $C_1^{(1)} C_2^{(2)} - C_1^{(2)} C_2^{(1)} \neq 0$. Then the following linear system
\begin{eqnarray*}
 C_1^{(1)} X_1(z)+C_2^{(1)} X_2(z)+\textbf{o}(\r-z) & = &\operatorname{LP}_1,\\
 C_1^{(2)} X_1(z)+C_2^{(2)} X_2(z)+\textbf{o}(\r-z) & = &\operatorname{LP}_2
\end{eqnarray*}
has a unique solution for $X_1(z)$ and $X_2(z)$, but this means that both of them are
of order $\O(\r-z)$, a contradiction to (\ref{equ:xi-i-psi=0}), where
$X_1(z),X_2(z)\neq \O(\r-z)$.
\par
Evaluating Equation (\ref{equ:psi0-equation}) with $i=2$ at $z=\r$ gives $C_2^{(2)}>0$. Equation (\ref{det=0}) yields
\begin{equation}\label{lt1-lt2}
 \operatorname{LP}_1-\frac{C_2^{(1)}}{C_2^{(2)}}\operatorname{LP}_2=0.
\end{equation}
Evaluating the last equation at $z=0$ yields
\begin{equation}\label{computed2c2}
-\r_1 + \frac{C_2^{(1)}}{C_2^{(2)}} \cdot \zeta_2(\r)  = 0 .
\end{equation}
Since $C_2^{(1)}<0$, Equation (\ref{computed2c2}) gives us a contradiction, therefore $\Phi''(\bar{\theta})=\infty$ cannot hold when $\Psi(\bar{\theta})=0 $.
\end{proof}

We now proceed analogously to the previous section: we substitute the expansion
of the last lemma in Equations (\ref{equ:xi1}) and (\ref{equ:xi2}) and
determine step by step the next terms in the expansions of $\zeta_1(z)$ and
$\zeta_2(z)$. The next lemma shows that we get only a finite number of terms up
to order $(\r-z)^2$: 
\begin{Lemma}\label{lem:high_ord_II}
Let $i\in\{1,2\}$. If $\Psi(\bar\theta)=0$, we can expand $\zeta_i(z)$ in a neighbourhood of $z=\r$ in the following way:
$$
\zeta_i(z) = \zeta_i(r) + c_0 \sqrt{\r-z} 
+ \sum_{(q,k)\in\mathcal{T}}
c_{(q,k)} (\r-z)^q \log^k(\r-z) + \O\bigl((\r-z)^2\bigr),
$$
where $\mathcal{T}$ is a finite subset of $\widehat{\mathcal{T}} :=\bigl\lbrace
(q,k)\in\mathbb{R}\times \N_0 \mid 1/2<q\leq 2\bigr\rbrace$ and $c_0,c_{(q,k)}\in\mathbb{R}$ with $c_0\neq 0$.
\end{Lemma}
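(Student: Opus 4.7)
The plan is to mimic the inductive bootstrap argument of Lemma \ref{lemma:expansion-higher-order}, but with the base case now provided by the $\sqrt{\r-z}$ expansion of Lemma \ref{lemma:xi-i-psi=0} rather than by differentiability at $z=\r$. I substitute a provisional expansion of $\zeta_1(z)$ and $\zeta_2(z)$ into the functional equations (\ref{equ:xi1}) and (\ref{equ:xi2}), compare error terms, and read off the next singular term; then iterate until the remainder is $\O\bigl((\r-z)^2\bigr)$.

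To begin, I would start from $\zeta_i(z) = \zeta_i(\r) + c_0^{(i)}\sqrt{\r-z} + \mathbf{o}\bigl(\sqrt{\r-z}\bigr)$ and expand each first-visit function $F_i\bigl(s_i\mid \zeta_i(z)\bigr)$ via (\ref{equ:F-expansion}) together with Lemma \ref{lemma:F-expansion-rest}, which records the singular contributions coming from the expansion (\ref{equ:expansion-assumption}) of $G_i$. Compositions of the form $\bigl(\zeta_i(\r)-\zeta_i(z)\bigr)^q$ and $\log\bigl(\zeta_i(\r)-\zeta_i(z)\bigr)$ are then rewritten using analyticity of $(1+u)^q$ and $\log(1+u)$ near $u=0$, exactly as in formulas (\ref{equ:root-expansion}) and (\ref{equ:log-expansion}); the difference with Section \ref{sec:mainresults} is that the leading small parameter is now $\sqrt{\r-z}$, so half-integer powers of $(\r-z)$ appear from the outset.

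After substituting everything into (\ref{equ:xi1}) and (\ref{equ:xi2}) and moving all polynomial and already-identified singular terms to the left-hand side, the unknown remainders $X^{(1)}(z),X^{(2)}(z)$ satisfy a $2\times 2$ linear system whose coefficient matrix coincides, in the limit $z\to\r$, with the matrix $M$ of Proposition \ref{prop:xi-expansion}; invertibility of $M$ therefore holds by the same determinant argument, and solving forces the remainders to inherit the order of the right-hand sides. This identifies the next term $(\r-z)^{\check q}\log^{\check k}(\r-z)$ in the expansion: the admissible exponents $\check q\in(1/2,2]$ are finite sums of elements of the finite set $\bigl\{\tfrac{1}{2},\,1,\,q/2,\,q-1,\,q : (q,\cdot)\in\mathcal{T}_1\cup\mathcal{T}_2\bigr\}$, hence there are only finitely many such $\check q$; and for each admissible $\check q$ only finitely many log powers $\check k$ can arise, since the logarithms appearing through (\ref{equ:root-expansion})--(\ref{equ:log-expansion}) stem from the fixed finite collection of logarithmic singular terms of $G_1$ and $G_2$.

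Iterating terminates as soon as the next admissible $\check q$ exceeds $2$, producing the claimed expansion with $\mathcal{T}$ finite; the non-vanishing of $c_0$ is already part of Lemma \ref{lemma:xi-i-psi=0}. I expect the main obstacle to be the combinatorial bookkeeping of which half-integer and logarithmic exponents can appear at each step, together with verifying that every iteration does strictly raise the leading order of the remainder, so that the procedure actually halts before order $(\r-z)^2$. This is entirely parallel to Lemma \ref{lemma:expansion-higher-order}, the only structural novelty being the presence of $\sqrt{\r-z}$ in the base case rather than a $\O(\r-z)$ leading correction.
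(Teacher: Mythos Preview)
Your overall strategy matches the paper's: start from the $\sqrt{\r-z}$ expansion of Lemma \ref{lemma:xi-i-psi=0}, substitute into (\ref{equ:xi1})--(\ref{equ:xi2}), and iterate as in Lemma \ref{lemma:expansion-higher-order}; the finiteness of $\mathcal{T}$ via the finitely generated set of admissible exponents is exactly the paper's argument (with the set $\{1/2,\,q/2,\,q/2-1/2:(q,\cdot)\in\mathcal{T}_1\cup\mathcal{T}_2\}$).

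However, your claim that ``invertibility of $M$ therefore holds by the same determinant argument'' is wrong, and this is a genuine gap. The determinant computation in Proposition \ref{prop:xi-expansion} relies on $0<\zeta_i'(\r)<\infty$ (Lemma \ref{lemma:xi_derivatives}), which fails in the present setting: by Lemma \ref{lemma:xi-i-psi=0} one has $\zeta_i(z)-\zeta_i(\r)\sim c_0^{(i)}\sqrt{\r-z}$, so $\zeta_i'(\r)=\infty$. In fact $M$ is \emph{necessarily singular} when $\Psi(\bar\theta)=0$: substituting the leading $\sqrt{\r-z}$ term into (\ref{equ:xi1}) and (\ref{equ:xi2}) and comparing coefficients of $\sqrt{\r-z}$ (the left-hand sides contain none) forces $m_{11}c_0^{(1)}+m_{12}c_0^{(2)}=0$ and $m_{21}c_0^{(1)}+m_{22}c_0^{(2)}=0$, so the nonzero vector $(c_0^{(1)},c_0^{(2)})$ lies in $\ker M$ and $\det M=0$. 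Consequently you cannot invert $M$ to conclude that the remainders inherit the order of the right-hand sides.

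The paper's proof does not spell out how the iteration progresses either --- it simply says ``analogously to the proof of Lemma \ref{lemma:expansion-higher-order}'' --- but it does not make the false invertibility claim. What actually drives the bootstrap here is that, with $M$ of rank one, the two equations collapse to a single linear relation between $X^{(1)}$ and $X^{(2)}$ at each order, while the cross-terms $c_0^{(i)}\sqrt{\r-z}\cdot X^{(j)}(z)$ (which were discarded as ``higher order'' in Section \ref{sec:mainresults}) now supply the second relation needed to pin down the next coefficient. You would need to make this mechanism explicit to close the gap; the finiteness argument for $\mathcal{T}$ is unaffected.
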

\begin{proof}
We start by plugging $\zeta_i(z)= \zeta_i(\r)+c_0\sqrt{\r-z}+X_0^{(i)}(z)$ with $X_0^{(i)}(z)=\mathbf{o}(\sqrt{r-z})$ into
Equations (\ref{equ:xi1}) and (\ref{equ:xi2}) and determine step by step the next terms inductively
analogously to the proof of Lemma
\ref{lemma:expansion-higher-order}.
Assume now that $\zeta_i(z)$ has an expansion of the form
$$
\zeta_i(\r) + c_0 \sqrt{\r-z} 
+ \sum_{(q,k)\in\mathcal{T}'} c_{(q,k)}
(\r-z)^q \log^k(\r-z) + \o(\max \mathcal{T}'),
$$
where $\T'$ with $\T'\subseteq\widehat{\T}$ finite. For $p>1$,
$\bigl(\zeta_i(\r)-\zeta_i(z)\bigr)^p$ can be rewritten as
\begin{equation}\label{equ:root-expansion2} 
(-c_0)^p\,(\r-z)^{p/2}\,\biggl(1 
+ \sum_{(q,k)\in\mathcal{T}'} \frac{c_{(q,k)}}{c_0} (\r-z)^{q-1/2} \log^k(\r-z) +
\o\Bigl(\frac{\max \mathcal{T}'}{\sqrt{\r-z}}\Bigr)\biggr)^p
\end{equation}
and $\log\bigl(\zeta_i(\r)-\zeta_i(z)\bigr)$ as
\begin{equation}\label{equ:log-expansion2} 
C + \frac{1}{2}\log(\r-z) + \log \biggl(1+
\sum_{(q,k)\in\mathcal{T}'} \frac{c_{(q,k)}}{c_0} (\r-z)^{q-1/2} \log^k(\r-z) +\o\Bigl(\frac{\max \mathcal{T}'}{\sqrt{\r-z}}\Bigr) \biggr).
\end{equation} 
Once again, if $\max \mathcal{T}'=(\r-z)^{\hat q} \log^{\hat k}(\r-z)$ then the next possible terms up to order $(\r-z)^{\hat q}$ in the expansion may only be
$$
(\r-z)^{\hat q} \log^{\hat k-1}(\r-z), (\r-z)^{\hat q} \log^{\hat k-2}(\r-z),
\dots, (\r-z)^{\hat q}.
$$
We determine step by step the corresponding coefficients of these terms by
plugging the expansions of $\zeta_i(z)$, (\ref{equ:root-expansion2}) and (\ref{equ:log-expansion2}) into Equations
(\ref{equ:xi1}) and (\ref{equ:xi2}) and comparing error terms. The
next term has the form $(\r-z)^{\check q} \log^{\check k}(\r-z)$,
where $\check q\leq 2$ is now a sum of elements from the finite set
$\bigl\lbrace 1/2,q/2,q/2-1/2 \mid (q,\cdot)\in\mathcal{T}_1 \cup \mathcal{T}_2
\bigr\rbrace$ such that $\check q >\hat q$ (recall the definitions of
$\mathcal{T}_i$ from (\ref{equ:expansion-assumption})). Due to (\ref{equ:root-expansion2}) and (\ref{equ:log-expansion2})
there is obviously a maximal $\check k\in\N_0$ such that $(\r-z)^{\check q}
\log^{\check k}(\r-z)$ may be a non-vanishing next term in the expansion of
$\zeta_i(z)$. Iterating the last steps yields the claim of the lemma, since
there are only finitely many possible values for $q$ such that the term $(\r-z)^{q}
\log^{k}(\r-z)$ may appear in the expansion of $\zeta_i(z)$.
\end{proof}
Substituting the obtained expansion of $\zeta_1(z)$ into Equation (\ref{equ:G-Gi-link})
yields the proposed claim of Theorem \ref{th:psi-bar-theta=0}.
\par
\textit{Remark:} The result could also be obtained analogously to
Flajolet and Sedgewick \cite[Section VI.7.]{flajolet07} by singularity
analysis, but one still has to prove positivity and finiteness of $\Phi''(\bar\theta)$.

\section{The remaining Cases}
\label{sec:lowdim}

In this section we look at all remaining cases not covered by Section
\ref{sec:mainresults} and \ref{sec:psi=0}. Afterwards we will extend our results to free products $\Gamma_1\ast\ldots\ast \Gamma_m $ with $m>2$.

\subsection{Case $G_1(\r_1)<\infty$ and $G_1'(\r_1)=\infty$}
\label{subsec:d1=3,4}
\begin{Th}\label{thm:5.1}
Consider a free product of the form $\Gamma_1 \ast \Gamma_2$, where
$G_1(\r_1)<\infty$, $G'_1(\r_1)=\infty$ and $G_2'(\r_2)<\infty$. Then:
$$
\mu^{(n\delta)}(e) \sim
\begin{cases}
C_1\cdot \r^{-n\delta}\cdot n^{-3/2}, & \textrm{if } \bar \theta =
\theta_1/\alpha_1\ \textrm{ or }\ \Psi(\bar\theta) \leq 0,\\[2ex]
C_2\cdot \r^{-n\delta}\cdot n^{-\lambda_2}\cdot \log^{\kappa_2}(n), & \textrm{if } \bar \theta = \theta_2/\alpha_2<
\theta_1/\alpha_1 \textrm{ and } \Psi(\bar\theta) >0.
\end{cases}
$$
\end{Th}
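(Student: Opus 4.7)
The plan is to split on the sign of $\Psi(\bar\theta)$, after first establishing the key reduction that, under the present hypotheses, $\bar\theta=\theta_1/\alpha_1$ already implies $\Psi(\bar\theta)<0$. Differentiating (\ref{phi_12}) and letting $z\to\r_1$, the assumption $G_1'(\r_1)=\infty$ gives $\Phi_1'(\theta_1)=\lim_{z\to\r_1}G_1'(z)/\bigl(G_1(z)+zG_1'(z)\bigr)=1/\r_1$, hence $\Psi_1(\theta_1)=\Phi_1(\theta_1)-\theta_1\cdot 1/\r_1=G_1(\r_1)-G_1(\r_1)=0$. On the other hand $\Psi_2'(t)=-t\Phi_2''(t)<0$ on $(0,\theta_2)$ by strict convexity of $\Phi_2$, so $\Psi_2$ is strictly decreasing on $(0,\theta_2]$ with $\Psi_2(0)=1$, giving $\Psi_2(\alpha_2\bar\theta)<1$ since $\alpha_2\bar\theta>0$. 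Plugging into (\ref{equ:Phi-Psi-formula}) yields $\Psi(\bar\theta)=\Psi_1(\theta_1)+\Psi_2(\alpha_2\bar\theta)-1<0$. Consequently, whenever $\Psi(\bar\theta)\geq 0$ one must have $\bar\theta=\theta_2/\alpha_2<\theta_1/\alpha_1$, so $\zeta_1(\r)<\r_1$ and therefore the functions $F_1(s_1|\cdot)$ and $G_1$ are analytic at $\zeta_1(\r)$; in particular $G_1'\bigl(\zeta_1(\r)\bigr)<\infty$.

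This reduction collapses the first branch of the theorem into the single case $\Psi(\bar\theta)\leq 0$. If $\Psi(\bar\theta)<0$, the general expansion (\ref{equ:G-Psi<0}) recalled in the introduction gives $G(z)=A(z)+\sqrt{\r-z}\,B(z)$ with $A,B$ analytic near $\r$ and $B(\r)\neq 0$; Darboux's method applied across the $\delta$ singularities on the circle of convergence, exactly as at the end of the proof of Theorem \ref{thm:Psi<0}, produces the $n^{-3/2}$ law. If $\Psi(\bar\theta)=0$, the reduction supplies $\zeta_1(\r)<\r_1$ and $G_1'\bigl(\zeta_1(\r)\bigr)<\infty$, which is precisely the relaxation recorded in the remark following Theorem \ref{th:psi-bar-theta=0}; the argument of Section \ref{sec:psi=0} then yields the same $n^{-3/2}$ asymptotics.

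For the second branch, $\bar\theta=\theta_2/\alpha_2<\theta_1/\alpha_1$ and $\Psi(\bar\theta)>0$, one interchanges the indices $1$ and $2$ throughout and applies Theorem \ref{thm:Psi<0}. The differentiability hypotheses of that theorem now read as $G_2'(\r_2)<\infty$ (true by assumption) together with analyticity of the ``new $F_2$'' at its relevant argument, which is automatic since $\zeta_1(\r)<\r_1$ makes the old $F_1$ analytic there. The whole machinery of Section \ref{sec:mainresults}---the inductive expansion of $\zeta_1$ and $\zeta_2$ in Proposition \ref{prop:xi-expansion}, the invertibility of the matrix $M$, the identification of the main leading singular term as $S_2(z)$ since $\zeta_2(\r)=\r_2$, Lemma \ref{lemma:expansion-higher-order}, and Darboux's method via (\ref{equ:G-Gi-link})---runs unchanged and yields the inherited $n^{-\lambda_2}\log^{\kappa_2}(n)$ asymptotics. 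The only conceptual obstacle is the preliminary reduction in the first paragraph: once one sees that $\bar\theta=\theta_1/\alpha_1$ cannot coexist with $\Psi(\bar\theta)\geq 0$ under $G_1'(\r_1)=\infty$, the three remaining sub-cases $\Psi(\bar\theta)<0$, $\Psi(\bar\theta)=0$, and $\Psi(\bar\theta)>0$ are respectively direct quotations of (\ref{equ:G-Psi<0}), Theorem \ref{th:psi-bar-theta=0} and Theorem \ref{thm:Psi<0}.
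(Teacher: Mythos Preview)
Your proof is correct and follows essentially the same strategy as the paper: both establish that $\bar\theta=\theta_1/\alpha_1$ forces $\Psi_1(\theta_1)=0$ and hence $\Psi(\bar\theta)<0$, then dispatch the three remaining sub-cases to (\ref{equ:G-Psi<0}), Section~\ref{sec:psi=0}, and Section~\ref{sec:mainresults} respectively. The only cosmetic difference is that the paper obtains $\Psi_1(\theta_1)=0$ via the first-return function $U_1$ and the formula $\Psi_1\bigl(zG_1(z)\bigr)=1/\bigl(zU_1'(z)+1-U_1(z)\bigr)$, whereas you compute $\Phi_1'(\theta_1)=1/\r_1$ directly from differentiating (\ref{phi_12}); these are equivalent one-line calculations.
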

\begin{proof}
For the first part of the proof assume that
$\bar\theta=\theta_1/\alpha_1$. With 
$$
U_1(z):=\sum_{g\in\Gamma_1}\mu_1(g)\,z\,F_1(g^{-1}|z)
$$ 
we have the well-known equation $G_1(z)=1/\bigl(1-U_1(z)\bigr)$. Therefore, $G_1'(\r_1)=\infty$ implies $U_1'(\r_1)=\infty$, and we get due to 
\cite[Equation (9.14)]{woess} 
\begin{equation}\label{equ:psi1=0}
\Psi_1(\alpha_1 \bar\theta)=
\Psi_1(\theta_1)
= \lim_{z\to\r_1}\Psi_1\bigl(z G(z)\bigr) = \lim_{z\to\r_1} \frac{1}{z U_1'(z) + 1 - U_1(z)} = 0.
\end{equation}
Thus,
$$
\Psi(\bar \theta) 
 = \Psi_1(\alpha_1 \bar\theta) + \Psi_2(\alpha_2 \bar\theta)-1
=\Psi_1(\theta_1) + \Psi_2(\alpha_2 \bar\theta)-1 = \Psi_2(\alpha_2 \bar\theta)-1. 
$$
Recall that $\Psi(t)$ is strictly decreasing and $\Psi_2(0)=1$. Therefore,
$\Psi(\bar\theta)<0$, and consequently we obtain the asymptotic behaviour
$\mu^{(n\delta)}(e) \sim C_1 \r^{-n\delta} n^{-3/2}$; see \cite[Theorem 17.3]{woess}. 
\par
For the case 
$\bar\theta=\theta_2/\alpha_2 <\theta_1/\alpha_1$ and $\Psi(\bar\theta)=0$, we
refer to Section \ref{sec:psi=0}. 
\par
In the case $\bar \theta = \theta_2/\alpha_2< \theta_1/\alpha_1$ and
$\Psi(\bar\theta) >0$ the Green function $G_1(z)$ is analytic at $z=\zeta_1(\r)<\r_1$ and
thus we may apply the technique from Section \ref{sec:mainresults} to obtain
the proposed asymptotic behaviour.
\end{proof}
At this point, let us remark that the formula for $\Psi(t)$ used in Equation (\ref{equ:psi1=0}) always implies $\Psi_i(\theta_i)=0$ whenever $G_i'(\r_i)=\infty$. Moreover:
\begin{Cor}\label{Cor:5.2}
If $G_1'(\r_1)=G_2'(\r_2)=\infty$, then $\mu^{(n\delta)}(e) \sim C\cdot \r^{-n\delta}\cdot
n^{-3/2}$.
\end{Cor}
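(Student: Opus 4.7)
The plan is to reduce this to the case $\Psi(\bar\theta)<0$, which is already handled by \cite[Theorem 17.3]{woess} (recalled in the discussion around (\ref{equ:G-Psi<0})). The key input is the remark stated just before the corollary, namely that $G_i'(\r_i)=\infty$ forces $\Psi_i(\theta_i)=0$ for $i\in\{1,2\}$. This is proved exactly as in (\ref{equ:psi1=0}): from $G_i(z)=1/(1-U_i(z))$ one gets $U_i'(\r_i)=\infty$, and then $\Psi_i(\theta_i)=\lim_{z\to\r_i}1/(zU_i'(z)+1-U_i(z))=0$.

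Now I would do a short case analysis according to where the minimum $\bar\theta=\min\{\theta_1/\alpha_1,\theta_2/\alpha_2\}$ is attained, and in each case combine the vanishing $\Psi_i(\theta_i)=0$ with the identity
$$
\Psi(\bar\theta)=\Psi_1(\alpha_1\bar\theta)+\Psi_2(\alpha_2\bar\theta)-1
$$
from (\ref{equ:Phi-Psi-formula}) and the strict monotonicity of each $\Psi_i$ on $[0,\theta_i]$ (with $\Psi_i(0)=1$). If $\bar\theta=\theta_1/\alpha_1<\theta_2/\alpha_2$, then $\alpha_1\bar\theta=\theta_1$ and $\alpha_2\bar\theta<\theta_2$, so $\Psi_1(\alpha_1\bar\theta)=0$ and $\Psi_2(\alpha_2\bar\theta)\in(0,1)$, hence $\Psi(\bar\theta)<0$. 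The symmetric case is analogous. Finally, if $\bar\theta=\theta_1/\alpha_1=\theta_2/\alpha_2$, then both $\alpha_i\bar\theta=\theta_i$, giving $\Psi(\bar\theta)=0+0-1=-1<0$.

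Thus in every case $\Psi(\bar\theta)<0$, and the asymptotic $\mu^{(n\delta)}(e)\sim C\cdot\r^{-n\delta}\cdot n^{-3/2}$ follows directly from the expansion (\ref{equ:G-Psi<0}) and Darboux's method, as already invoked in the proof of Theorem \ref{thm:5.1}. The only non-routine step is the identity $\Psi_i(\theta_i)=0$, but this is precisely the content of the remark preceding the corollary and mirrors the computation in (\ref{equ:psi1=0}); everything else is a bookkeeping argument on monotonicity.
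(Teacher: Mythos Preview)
Your proof is correct and follows essentially the same route as the paper's. The paper compresses your case analysis into a single sentence---``at least one of $\Psi_1(\alpha_1\bar\theta)$ and $\Psi_2(\alpha_2\bar\theta)$ equals zero, yielding $\Psi(\bar\theta)<0$''---but the underlying logic (using $\Psi_i(\theta_i)=0$ from (\ref{equ:psi1=0}), the identity (\ref{equ:Phi-Psi-formula}), and strict monotonicity of $\Psi_i$ with $\Psi_i(0)=1$) is identical to what you spelled out.
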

\begin{proof}
Since $U'_1(\r_1)=U'_2(\r_2)=\infty$, Equation (\ref{equ:psi1=0}) implies that at least one of
$\Psi_1(\alpha_1\bar\theta)$ and $\Psi_2(\alpha_2\bar\theta)$ equals zero,
yielding $\Psi(\bar\theta)<0$.
\end{proof}

\subsection{Case $G_1(\r_1)=\infty$}

For finite groups $\Gamma_1$ and $\Gamma_2$, Woess \cite{woess3} proved
that the $n$-step return probabilities behave asymptotically like
$C\cdot \r^{-n\delta}\cdot n^{-3/2}$. Moreover, we get the following asymptotic behaviours:
\begin{Th}\label{thm:5.3}
Consider a free product of the form $\Gamma_1 \ast \Gamma_2$, where
$G_1(\r_1)=\infty$. Then:
$$
\mu^{(n\delta)}(e) \sim
\begin{cases}
C_1\cdot \r^{-n\delta}\cdot n^{-3/2}, & \textrm{if } 
\Psi(\bar\theta) \leq 0,\\
C_2 \cdot \r^{-n\delta} \cdot n^{-\lambda_2}\cdot \log^{\kappa_2}(n), & \textrm{if } \Psi(\bar\theta)> 0.
\end{cases}
$$
\end{Th}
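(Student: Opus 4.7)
The plan is to reduce Theorem \ref{thm:5.3} to the results already established in Sections \ref{sec:mainresults} and \ref{sec:psi=0} together with \cite[Theorem 17.3]{woess}. The key observation is that $G_1(\r_1)=\infty$ forces $\theta_1=\infty$ and, since $\sum_n \mu_1^{(n)}(e_1)\r_1^n=\infty$ implies $\sum_n n\,\mu_1^{(n)}(e_1)\r_1^{n-1}=\infty$, also $G_1'(\r_1)=\infty$. By the remark following Theorem \ref{thm:5.1} this yields $\Psi_1(\theta_1)=0$.

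First I would separate according to whether $G_2(\r_2)$ is finite. If $G_2(\r_2)=\infty$ as well, then symmetrically $\Psi_2(\theta_2)=0$, so the functional equation (\ref{equ:Phi-Psi-formula}) read as $t\to\infty$ gives $\Psi(\bar\theta)=-1<0$; \cite[Theorem 17.3]{woess} then provides the expansion $G(z)=A(z)+\sqrt{\r-z}\,B(z)$ with $B(\r)\neq 0$, and Darboux's method yields the $n^{-3/2}$ law, matching the first case of the theorem. If $G_2(\r_2)<\infty$, then $\bar\theta=\theta_2/\alpha_2<\theta_1/\alpha_1=\infty$, so \cite[Lemma 17.1.a]{woess} gives $\zeta_1(\r)<\r_1$. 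Consequently $G_1$ and every $F_1(s_1|\,\cdot\,)$ is analytic at $\zeta_1(\r)$, which is exactly the regularity of $\Gamma_1$ used throughout Sections \ref{sec:mainresults} and \ref{sec:psi=0}. I would then split on the sign of $\Psi(\bar\theta)$: for $\Psi(\bar\theta)<0$ apply \cite[Theorem 17.3]{woess} directly; for $\Psi(\bar\theta)=0$ apply the extended version of Theorem \ref{th:psi-bar-theta=0} discussed at the end of Section \ref{sec:RW}; for $\Psi(\bar\theta)>0$, apply the argument of Theorem \ref{thm:Psi<0} with the roles of the two factors interchanged, so that $\zeta_2(\r)=\r_2$ provides the main singular contribution $S_2(z)=(\r_2-z)^{q_2}\log^{k_2}(\r_2-z)$ and the resulting asymptotic is $C_2\,\r^{-n\delta}\,n^{-\lambda_2}\log^{\kappa_2}(n)$.

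The structural verification needed in the last two subcases is that $G_2'(\r_2)<\infty$, so that the basic assumption (\ref{equ:expansion-assumption}) applies non-trivially to $\Gamma_2$. If instead $G_2'(\r_2)=\infty$, the same remark after Theorem \ref{thm:5.1} forces $\Psi_2(\theta_2)=0$, whence $\Psi(\bar\theta)=\Psi_1(\alpha_1\bar\theta)-1<0$ (since $\Psi_1$ is strictly decreasing from $\Psi_1(0)=1$ and $\alpha_1\bar\theta>0$), contradicting $\Psi(\bar\theta)\geq 0$. Hence $\Psi(\bar\theta)\geq 0$ automatically entails $G_2'(\r_2)<\infty$, and the invocations of Sections \ref{sec:mainresults} and \ref{sec:psi=0} are legitimate. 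I expect the main subtlety to lie in verifying that every use of $G_1$, $G_1'$, or the first-visit generating functions $F_1(s_1|\,\cdot\,)$ in the proofs of those sections actually takes place at $\zeta_1(\r)<\r_1$; since this point lies strictly inside the disc of convergence of $G_1$, all relevant expansions are honest Taylor series and no singular contributions from the $\Gamma_1$-side enter the computation.
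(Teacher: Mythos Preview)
Your proposal is correct and follows essentially the same route as the paper: split on whether $G_2'(\r_2)$ is finite, observe that $G_2'(\r_2)=\infty$ (which subsumes $G_2(\r_2)=\infty$) forces $\Psi(\bar\theta)<0$ via $\Psi_2(\theta_2)=0$, and in the remaining case use $\bar\theta=\theta_2/\alpha_2$ together with $\zeta_1(\r)<\r_1$ to invoke the machinery of Sections~\ref{sec:mainresults} and~\ref{sec:psi=0}. Your explicit remark that $\zeta_1(\r)<\r_1$ renders all $\Gamma_1$-side generating functions analytic (so no singular contribution comes from that factor) is exactly the point the paper summarizes by ``we can follow the argumentation of Sections~\ref{sec:mainresults} and~\ref{sec:psi=0} analogously''.
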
 
\begin{proof}
If $G_2'(\r_2)=\infty$, we have $\Psi(\bar\theta)<0$; see proof of Corollary \ref{Cor:5.2}.
\par
If $G_2(\r_2)<\infty$ and $G_2'(\r_2)=\infty$ then
$\bar\theta=\theta_2/\alpha_2$, and $U'_2(\r_2)=\infty$. This implies once
again $\Psi(\alpha_2\bar\theta)=0$, and thus $\Psi(\bar\theta)<0$.
\par
If $G_2'(\r_2)<\infty$ then $\bar\theta=\theta_2/\alpha_2$ and $\zeta_1(\r)<\r_1$. Therefore, we can follow the
argumentation of \mbox{Section \ref{sec:mainresults}} and \ref{sec:psi=0} analogously to prove the proposed claim.
\end{proof}

\subsection{Free Products with more than two Factors}

\label{sec:higher-order}

Let $m\in\N$ with $m\geq 3$. Suppose we are given finitely generated groups $\Gamma_1,\dots,\Gamma_m$. We consider now a free product of the form
$\Gamma:=\Gamma_1\ast \ldots \ast \Gamma_m$, on which
a random walk is governed by the measure $\mu$ defined as $\mu:=\sum_{j=1}^{m}\alpha_j
\bar\mu_j$; see Section \ref{sec:RW}. We get the following result:
\begin{Th}\label{thm:higher-order}
Let $m\geq 3$. Consider the free product $\Gamma:=\Gamma_1\ast \ldots \ast \Gamma_m$ equipped with a random walk governed by
$\mu:=\sum_{j=1}^{m}\alpha_j \bar\mu_j$. Assume that the corresponding Green functions $G_i(z)$ on the free factors $\Gamma_i$ have an expansion as in (\ref{equ:expansion-assumption}) whenever $G_i'(\r)<\infty$. Denote by $\r$ the radius of convergence
of the Green function associated with the random walk on $\Gamma$. \
Then the asymptotic behaviour of the
corresponding $n$-step transition probabilities must obey one of the following
laws: $C\, \r^{-n\delta}\, n^{-\lambda_i}\,\log^{\kappa_i}(n)$, where $\lambda_i$
and $\kappa_i$ are inherited from one
of the $\mu_i$'s, or $C\, \r^{-n\delta}\, n^{-3/2}$ with some constant $C=C_\mu$ depending on $\mu$.
\end{Th}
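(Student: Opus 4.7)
The proof is by induction on $m$. The base case $m=2$ is supplied by Theorems~\ref{thm:Psi<0}, \ref{th:psi-bar-theta=0}, \ref{thm:5.1} and \ref{thm:5.3} together with Corollary~\ref{Cor:5.2}, which jointly exhaust every combination of finiteness of $G_i(\r_i)$, $G_i'(\r_i)$ and every sign of $\Psi(\bar\theta)$. For the step from $m-1$ to $m$, set
\[
\Gamma':=\Gamma_1\ast\ldots\ast\Gamma_{m-1},\qquad \alpha':=\alpha_1+\ldots+\alpha_{m-1},\qquad \mu':=\sum_{j=1}^{m-1}\frac{\alpha_j}{\alpha'}\bar\mu_j,
\]
so that $\Gamma=\Gamma'\ast\Gamma_m$ and $\mu=\alpha'\bar\mu'+\alpha_m\bar\mu_m$. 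Thus $(\Gamma,\mu)$ is literally a two-factor free product of $(\Gamma',\mu')$ with $(\Gamma_m,\mu_m)$ in the sense of Section~\ref{sec:RW}, and $\delta=\gcd(\delta',\delta_m)=\gcd(\delta_1,\ldots,\delta_m)$, where $\delta'$ is the period of $\mu'$.

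In order to invoke the appropriate two-factor theorem on $\Gamma'\ast\Gamma_m$, one must verify that $(\Gamma',\mu')$ satisfies the standing hypothesis on its Green function $G'(z)$: whenever the derivative of $G'$ at its radius of convergence $\r'$ is finite, $G'(z)$ must admit an expansion of the form (\ref{equ:expansion-assumption}) at $z=\r'$. This stability of (\ref{equ:expansion-assumption}) under free products is precisely what Lemma~\ref{lemma:expansion-higher-order} (in the $\Psi(\bar\theta)>0$ case) and Lemma~\ref{lem:high_ord_II} (in the $\Psi(\bar\theta)=0$ case) establish, combined with (\ref{equ:G-Gi-link}): the free-product Green function inherits an expansion containing only finitely many algebraic-logarithmic terms $(\r'-z)^q\log^k(\r'-z)$ up to any prescribed order. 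In the degenerate subcases where this derivative is infinite, or even $G'(\r')=\infty$, no expansion assumption is needed, and the relevant two-factor statement is Theorem~\ref{thm:5.1}, Corollary~\ref{Cor:5.2} or Theorem~\ref{thm:5.3}.

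Once this compatibility is in hand, I apply whichever two-factor theorem matches the current configuration of $\Psi(\bar\theta)$ and of the Green-function values at $\r'$ and $\r_m$. The conclusion for $\mu^{(n\delta)}(e)$ is one of three mutually exclusive laws: a universal $C_0\,\r^{-n\delta}n^{-3/2}$ law (from the $\Psi(\bar\theta)\leq 0$ and degenerate branches), the law inherited from $\mu_m$, or the law inherited from $\mu'$. In the last case the induction hypothesis identifies the inherited law either as the $n^{-3/2}$ law or as one coming from some $\mu_i$ with $i<m$, so in every case the asymptotic behaviour falls in the list stated in the theorem. The main obstacle is the previous paragraph: one must check, uniformly along the induction, that the expansion of $G'(z)$ produced in Sections~\ref{sec:mainresults} and \ref{sec:psi=0} can be carried to the exact order $(\r'-z)^{d'+2}$ required by the next invocation of a two-factor theorem. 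Once this bookkeeping is settled, the inductive step is immediate.
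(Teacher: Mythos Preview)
Your proposal is correct and follows essentially the same approach as the paper: induction on $m$, writing $\Gamma=\Gamma'\ast\Gamma_m$ with $\Gamma'=\Gamma_1\ast\ldots\ast\Gamma_{m-1}$ and the renormalised measure, checking that the Green function $G'(z)$ again admits an expansion of the required shape (the paper records this explicitly as alternatives (I) and (II)), and then invoking the two-factor results. Your version is in fact more explicit than the paper's sketch in enumerating which two-factor theorem applies in each configuration and in pointing to Lemmas~\ref{lemma:expansion-higher-order} and~\ref{lem:high_ord_II} as the source of the stability of the expansion hypothesis.
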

\begin{proof}
In order to prove the theorem, we just remark that -- by induction on the number of free
factors -- the Green function (with radius of convergence $\r^\ast$) of the random walk on $\Gamma^\ast
:=\Gamma_1\ast \ldots \ast \Gamma_{m-1}$ governed by
$\mu^\ast:=\sum_{j=1}^{m-1}\frac{\alpha_j}{\alpha_1+\ldots +\alpha_{m-1}} \bar\mu_j$ has an expansion either of the form
\begin{equation}
\tag{I}
 G^\ast(z) = \sum_{k=0}^{D} g_k(\r^\ast-z)^k + \sum_{(q,k)\in\mathcal{T}}
 g_{(q,k)} (\r^\ast-z)^q \log^k(\r^\ast-z) + \O\bigl((\r^\ast -z)^{D+2}\bigr),
\end{equation}
where $\mathcal{T}$ is a finite subset of $\{(q,k)\in\mathbb{R}\times\N_0 \mid
D<q\leq D+2\}$ and $g_k,g_{(q,k)}\in\mathbb{R}$, or of the form
\begin{equation}
\tag{II} G^\ast(z) = g_0 + g_1 \sqrt{\r^\ast-z} + 
\sum_{(q,k)\in\mathcal{T}}  g_{(q,k)} (\r^\ast-z)^q \log^k(\r^\ast-z) + \O\bigl((\r^\ast -z)^{2}\bigr),
\end{equation}
where $\mathcal{T}$ is a finite subset of $\{(q,k)\in\mathbb{R}\times\N_0 \mid
1/2<q\leq 2\}$ and $g_0,g_1,g_{(q,k)}\in\mathbb{R}$. 
Thus, we may apply the results from Section \ref{sec:mainresults}
to the free product $\Gamma^\ast \ast \Gamma_m$ equipped with
$\mu=(\alpha_1+\ldots +\alpha_{m-1})\mu^\ast +\alpha_m \bar\mu_m$ and obtain the proposed result.
\end{proof}

\section{Examples}
\label{sec:exmaples}

\subsection{Free Products of Lattices}
\label{sec:Z-lattices}

Let $d_1,\dots,d_m\in\N$. In this subsection we consider free products of
the form \mbox{$\Gamma:=\Z^{d_1}\ast\ldots \ast \Z^{d_m}$,} equipped with a
nearest neighbour random walk, that is, we always assume
$\mathrm{supp}(\mu_i)=\{\pm e_j^{(i)}\mid 1\leq j \leq d_i \}$, where $e_j^{(i)}$ is the $j$-th unit vector in $\Z^{d_i}$. In the following
subsection we show that the Green functions of nearest neighbour random walks
on $\Z^d$ have an expansion as requested by (\ref{equ:expansion-assumption}). Afterwards we can give a
complete classification of the asymptotic behaviour.

\subsubsection{Expansion of the Green Function on $\Z^d$}

Let $d\in \N$. Suppose we are given a probability measure $\pi$ with
$\mathrm{supp}(\pi)=\{\pm e_1,\dots,\pm e_{d}\}$, the set of
natural generators of $\Z^d$. Then $\pi$ defines a random
walk on $\Z^d$, and we denote by $\pi^{(n)}$ its $n$-fold convolution
power. We write for $1\leq i \leq d$
$$
\beta_i := \pi(e_i) + \pi(-e_i) \quad \textrm{ and }  \quad
 p_i:=\frac{\pi(e_i)}{\pi (e_i) +\pi (-e_i)}.
$$ 
Denote by $\mathbf{0}$ the zero vector in $\Z^d$. Once again $G_d(z):=\sum_{n\geq
  0} \pi^{(n)}(\mathbf{0}) z^n$ denotes the associated Green
function, which has radius of convergence $\r_d$.
The crucial point for our later discussion is the following:
\begin{Prop}\label{prop:expansion-Gd}
The Green function of the random walk on $\Z^d$ has an expansion of the form
$$
G_d(z) =
\begin{cases}
f(z) + g(z) (\r_d-z)^{(d-2)/2}, & \textrm{if } d \textrm{ is odd,}\\
f(z) + g(z) (\r_d-z)^{(d-2)/2}\log (\r_d-z), & \textrm{if } d \textrm{ is even,}
\end{cases}
$$
where the functions $f(z),g(z)$ are analytic in a neighbourhood of $z=\r_d$ and $g(\r_d)\neq 0$. 
\end{Prop}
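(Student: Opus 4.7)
The plan is to work with a Fourier representation of $G_d(z)$. Starting from
\[
G_d(z) = \frac{1}{(2\pi)^d}\int_{[-\pi,\pi]^d}\frac{d\theta}{1-z\,\hat\pi(\theta)},
\]
where $\hat\pi(\theta):=\sum_{i=1}^d\bigl(\pi(e_i)e^{\mathrm{i}\theta_i}+\pi(-e_i)e^{-\mathrm{i}\theta_i}\bigr)$, I would first perform an exponential tilt to reduce to the zero-drift case. Namely, choose $\eta^*\in\R^d$ such that $\tilde\pi(x):=\r_d\,\pi(x)\,e^{-\langle\eta^*,x\rangle}$ is a probability measure with mean zero; existence of $\eta^*$ follows from strict convexity of the log-moment generating function of $\pi$, together with the characterisation of $\r_d^{-1}$ as its infimum. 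Since $\tilde\pi^{(n)}(\mathbf{0})=\r_d^n\pi^{(n)}(\mathbf{0})$, one has $G_d(z)=\tilde G_d(z/\r_d)$, so it suffices to analyse the singularity of the zero-drift Green function $\tilde G_d$ at $z=1$.

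Next I would identify the singular points of the integrand on the torus. Since $|\hat{\tilde\pi}(\theta)|\leq 1$ with equality only at $\theta=0$ (where $\hat{\tilde\pi}=1$) and at $\theta^*:=(\pi,\dots,\pi)$ (where $\hat{\tilde\pi}=-1$, using that the nearest neighbour walk is bipartite), I split the integral via a smooth partition of unity into $I_0(z)$, $I_{\theta^*}(z)$ and a remainder $R(z)$ supported away from both points. The remainder is holomorphic on an open disk around $z=1$ because $1-z\,\hat{\tilde\pi}(\theta)$ stays bounded away from zero on its support, so $R$ can be absorbed into the analytic part $f(z)$. Near $\theta=0$ the zero-drift hypothesis gives the expansion
\[
\hat{\tilde\pi}(\theta)=1-\tfrac12\langle\theta,Q\theta\rangle+O(|\theta|^4),
\]
with $Q$ the positive definite covariance matrix of $\tilde\pi$, and similarly at $\theta^*$ after translation.

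Setting $w:=1-z$ and substituting $\theta=\sqrt{w}\,\phi$ in $I_0(z)$ yields, after factoring $w$ from the denominator,
\[
I_0(z) \;=\; \frac{w^{(d-2)/2}}{(2\pi)^d}\int_{|\phi|<\delta/\sqrt{w}}\frac{d\phi}{1+\tfrac12 z\langle\phi,Q\phi\rangle+O(w|\phi|^4)}.
\]
Diagonalising $Q$ and passing to polar coordinates, the radial integral $\int_0^{R/\sqrt{w}}\frac{s^{d-1}\,ds}{1+s^2}$ can be written as a polynomial in $s^2$ plus a residual term $\mathrm{const}/(1+s^2)$; integrating shows that, modulo an expression analytic in $w$, it contributes a finite constant when $d$ is odd and a $\log w$ term when $d$ is even. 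Multiplying by $w^{(d-2)/2}$ gives the claimed singular factor. The contribution from $I_{\theta^*}$ is treated in the same way and yields the same singular type with possibly different analytic coefficient. Undoing the tilt via $w=\r_d-z$ rescales these constants but preserves the structure, and the leading coefficient of $g(\r_d)$ is a non-zero Gaussian integral against $Q^{-1}$, which gives the non-vanishing statement.

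The main obstacle will be justifying that $f$ and $g$ are genuinely holomorphic on an open complex neighbourhood of $\r_d$, not merely smooth on a real interval ending at $\r_d$. This requires extending the Taylor expansion of $\hat{\tilde\pi}$ to complex $\theta$ and invoking an analytic Morse-type lemma to diagonalise $\hat{\tilde\pi}$ near each critical point, so that the rescaling $\theta=\sqrt{w}\,\phi$ remains valid for $z$ in a full disk around $\r_d$, and so that the $O(w|\phi|^4)$ corrections in the denominator assemble into bona fide analytic contributions to $f(z)$ and analytic corrections within $g(z)$.
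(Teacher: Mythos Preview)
Your approach is sound in outline but takes a genuinely different route from the paper. The paper does not work with the $d$-dimensional Fourier integral at all. Instead it exploits the product structure of the nearest-neighbour walk: each coordinate is an independent one-dimensional nearest-neighbour walk, whose exponential generating function admits the explicit integral representation
\[
E_i(z)=\int_{-1}^{1} e^{\sqrt{4p_i(1-p_i)}\,tz}\,\frac{dt}{\pi\sqrt{1-t^2}}.
\]
Multiplying these and passing back to the ordinary generating function yields a \emph{one-dimensional} integral
\[
G_d(z)=\int_{-\varrho}^{\varrho}\frac{1}{1-zt}\,\bigl(\hat f_1\ast\cdots\ast\hat f_d\bigr)(t)\,dt,
\]
where each $\hat f_i$ is a rescaled arcsine density. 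One then shows by an elementary induction on the convolution that the density behaves like $(\varrho-t)^{(d-2)/2}g_d(t)$ with $g_d$ analytic and non-vanishing at $t=\varrho$, and the singular expansion of the Cauchy-type integral against such a density is a classical computation.

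The trade-off is this: your Fourier/Morse approach is more robust (it would extend to finite-range walks that are not coordinate-wise independent), but the step you yourself flag---promoting the rescaling $\theta=\sqrt{w}\,\phi$ and the $O(w|\phi|^4)$ remainder to genuinely holomorphic data in a full complex neighbourhood of $\r_d$---is real work, typically requiring an analytic Morse lemma and careful contour deformation in several complex variables. The paper's route sidesteps this entirely: once the problem is reduced to a one-dimensional Stieltjes-type integral against an explicit algebraic density, analyticity of $f$ and $g$ near $z=\r_d$ is immediate, and the non-vanishing $g(\r_d)\neq 0$ comes directly from $g_d(\varrho)\neq 0$. So the paper's argument is more elementary and makes the hard analyticity claim essentially free, at the cost of being specific to the nearest-neighbour case.
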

\textit{Remarks:} For the case of simple random walks on $\Z^d$, i.e. $\pi(\pm e_i)=1/(2d)$, a proof of this proposition
can be found in \cite[Proposition 17.16]{woess}. In our case, we generalize the statement to arbitrary nearest neighbour random walks on $\Z^d$, but we will only give a sketch of the proof and refer once again to \cite{woess}. From the expansion follows with the help of Darboux's method that $\pi^{(2n)}(\mathbf{0})\sim C\,\r_d^{-2n}\,n^{-d/2}$; this asymptotic behaviour follows also from Cartwright and Soardi \cite{cartwright-soardi87}.
\begin{proof}
First, note that the spectral radius of the random walk on $\Z^d$ is given by
$$
\varrho = \sum_{i=1}^d \beta_i \sqrt{4p_i(1-p_i)}=\frac{1}{\r_d};
$$
compare with \cite[Theorem 8.23]{woess}. For $i\in\{1,\dots,d\}$, we define random walks on $\Z$
governed by probability measures $\pi_i$ with $\pi_i(1):=p_i$ and
$\pi_i(-1):=1-p_i$. For $z\in\C$, the \textit{exponential generating function} on $\mathbb{Z}^d$ is given by
\[
 E(z):=\sum_{n=0}^{\infty}\pi^{(n)} (\mathbf{0})\frac{z^n}{n!}
\]
and on the $i$-th coordinate axis it is given by
$$
E_i(z) := \sum_{n\geq 0}\pi_i^{(n)}(0) \frac{z^n}{n!} = \int_{-1}^1
e^{\sqrt{4p_i(1-p_i)} tz} \frac{1}{\pi \sqrt{1-t^2}}dt.
$$
In the last equation we applied the following relation, which is easy to check:
$$
\pi_i^{(n)}(0) = \int_{-1}^1 \sqrt{ 4 p_i (1-p_i)}^{n} t^{n} \frac{1}{\pi \sqrt{1-t^2}} dt.
$$
Furthermore, we get $E(z) = \prod_{i=1}^d E_i(\beta_i z)=\int_{-\varrho}^{\varrho} e^{tz} \bigl(\hat f_1 \ast \ldots \ast \hat
f_d\bigr)(t) dt$,
where 
$$
\hat f_i(t):= \frac{1}{\beta_i\sqrt{4p_i(1-p_i)}} f_0\Bigl(\frac{t}{\beta_i\sqrt{4p_i(1-p_i)}}\Bigr)
\textrm{ and } \
f_0(t):= 
\begin{cases}
\frac{1}{\pi \sqrt{1-t^2}}, & \textrm{ if } t\in(-1,1),\\
0, & \textrm{otherwise}.
\end{cases}
$$
This allows us to rewrite the Green function in the following way:
\begin{equation}\label{equ:green-integral}
G_d(z) = \int_{-\varrho}^\varrho \frac{1}{1-zt} \bigl(\hat f_1 \ast \ldots \ast
\hat f_d\bigr)(t)\,dt .
\end{equation}
Moreover, there is a function $g_d(t)$, which is analytic in a neighbourhood of
$t=\varrho$ and satisfies $g_d(\varrho)\neq 0$ such that
\begin{equation}\label{equ:density-expansion}
\bigl(\hat f_1 \ast\ldots \ast \hat f_d\bigr)(t) = (\varrho -t)^{(d-2)/2} g_d(t).
\end{equation}
To prove this, we define
$\bar f_i(t):= \hat f_i\bigl(\beta_i \sqrt{4 p_i(1-p_i)} -t\bigr)$ 
and show inductively that we can write
$$
(\bar f_1 \ast \ldots \ast \bar f_d)(t)=t^{(d-2)/2} \bar
g_d(t),
$$
where the function $\bar g_d(t)$ is analytic in a neighbourhood of $t=0$ and $\bar
g_d(0)\neq 0$. Analogously to the proof of \cite[Proposition 17.16]{woess}, we
may conclude together with (\ref{equ:green-integral}) and
(\ref{equ:density-expansion}) that $G_d(z)$ has the proposed expansion.
\end{proof}

\subsubsection{Classification of the Asymptotic Behaviour}

Observe that a nearest neighbour random walk on $\Z^d$ has period $2$ since it can return to the origin only in an even number of steps. Therefore, the period of a nearest neighbour random walk on $\Z^{d_1}\ast \Z^{d_2}$ is $\delta=2$.
Now we can give a complete classification of the asymptotic behaviour of
$n$-step return probabilities of nearest neighbour random walks on
$\Z^{d_1}\ast \Z^{d_2}$:
\begin{Th}
Consider irreducible nearest neighbour random walks on the lattices $\Z^{d_1}$ and
$\Z^{d_2}$ with $d_1\leq d_2$. Then the $n$-step return probabilities of the associated random
walk on $\Z^{d_1}\ast \Z^{d_2}$ obey one the following laws:
$$
\mu^{(2n)}(e) \sim
\begin{cases}
C_1 \cdot  \r^{-2n}\cdot n^{-d_1/2}, & \textrm{if } d_1\geq 5 \textrm{ and } \Psi(\bar\theta)>0 \textrm{ and } \bar\theta=\theta_1/\alpha_1,\\
C_2 \cdot \r^{-2n}\cdot n^{-d_2/2}, & \textrm{if } d_2\geq 5 \textrm{ and } \Psi(\bar\theta)>0 \textrm{ and } \bar\theta=\theta_2/\alpha_2<\theta_1/\alpha_1,\\
C_3 \cdot \r^{-2n}\cdot n^{-3/2}, & \textrm{otherwise}.
\end{cases}
$$
\end{Th}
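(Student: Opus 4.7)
The plan is to specialize the general machinery of Sections \ref{sec:mainresults}--\ref{sec:lowdim} to $\Z^{d_1}\ast \Z^{d_2}$ by using Proposition \ref{prop:expansion-Gd} to check hypothesis (\ref{equ:expansion-assumption}) for each free factor. From that proposition, the leading singular term of $G_{d_i}(z)$ is $(\r_i-z)^{(d_i-2)/2}$ for odd $d_i$ and $(\r_i-z)^{(d_i-2)/2}\log(\r_i-z)$ for even $d_i$; in either parity formula (\ref{equ:lambda-kappa}) yields $\lambda_i=d_i/2$ and $\kappa_i=0$, matching the claimed exponent $n^{-d_i/2}$. The regularity at $\r_i$ then splits into three regimes: if $d_i\geq 5$ then $(d_i-2)/2>1$, hence $G_{d_i}'(\r_i)<\infty$ and (\ref{equ:expansion-assumption}) holds; if $d_i\in\{3,4\}$ then $G_{d_i}(\r_i)<\infty$ with $G_{d_i}'(\r_i)=\infty$; if $d_i\in\{1,2\}$ then $G_{d_i}(\r_i)=\infty$.

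For Case~1 the hypothesis $d_1\geq 5$ combined with $d_1\leq d_2$ forces $d_2\geq 5$, so both factors satisfy (\ref{equ:expansion-assumption}). The inequality $d_1\leq d_2$ gives $S_1(z)\preceq S_2(z)$, and $\bar\theta=\theta_1/\alpha_1$ is equivalent to $\alpha_1\geq \theta_1/(\theta_1+\theta_2)$; Theorem \ref{thm:Psi<0} then applies directly and returns the $n^{-d_1/2}$ law. For Case~2 ($d_2\geq 5$, $\bar\theta=\theta_2/\alpha_2<\theta_1/\alpha_1$, $\Psi(\bar\theta)>0$), the strict inequality $\zeta_1(\r)<\r_1$ ensures that $G_1$ is analytic at $\zeta_1(\r)$ regardless of $d_1$: for $d_1\geq 5$ one invokes Theorem \ref{thm:Psi<0}, for $d_1\in\{3,4\}$ the second case of Theorem \ref{thm:5.1}, and for $d_1\in\{1,2\}$ the second case of Theorem \ref{thm:5.3}. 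In every instance the output is $n^{-\lambda_2}\log^{\kappa_2}(n)=n^{-d_2/2}$.

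The ``otherwise'' Case~3 collects all remaining configurations and must yield $n^{-3/2}$. When $d_1,d_2\geq 5$ the residue consists of $\Psi(\bar\theta)<0$ (handled by \cite[Theorem 17.3]{woess}) and $\Psi(\bar\theta)=0$ (handled by Theorem \ref{th:psi-bar-theta=0}). When $d_1\in\{3,4\}$ and $d_2\geq 5$, Theorem \ref{thm:5.1} gives $n^{-3/2}$ in the subcases $\bar\theta=\theta_1/\alpha_1$ or $\Psi(\bar\theta)\leq 0$. When $d_1,d_2\in\{3,4\}$, Corollary \ref{Cor:5.2} delivers $n^{-3/2}$ directly. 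When $d_1\in\{1,2\}$, Theorem \ref{thm:5.3} applies; here the remark after Theorem \ref{thm:5.1} that $U_i'(\r_i)=\infty$ implies $\Psi_i(\theta_i)=0$, together with the strict monotonicity of $\Psi$ and $\Psi_j(0)=1$, forces $\Psi(\bar\theta)<0$ in every subcase outside Case~2 (in particular, whenever $d_2\leq 4$), and again the conclusion is $n^{-3/2}$.

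The principal obstacle is the bookkeeping in Case~3: one must verify that whenever the non-differentiable Theorems \ref{thm:5.1} and \ref{thm:5.3} could in principle produce the inherited $n^{-\lambda_2}\log^{\kappa_2}(n)$ conclusion, we are in fact in Case~2 (so automatically $d_2\geq 5$, $\lambda_2=d_2/2$, $\kappa_2=0$), and that all remaining residual configurations truly fall into the $\Psi(\bar\theta)\leq 0$ regime. The systematic tool for this is the identity $\Psi_i(\theta_i)=0$ whenever $U_i'(\r_i)=\infty$, which propagates non-differentiability of any single factor into negativity of $\Psi(\bar\theta)$ and thereby sharpens the dichotomy at the threshold $d_i=5$, ruling out any ``hidden'' fourth asymptotic type and matching the trichotomy in the conclusion.
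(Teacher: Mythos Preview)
Your proposal is correct and follows exactly the approach the paper intends: the paper itself gives no explicit proof for this theorem (only a $\Box$), treating it as an immediate corollary of Proposition~\ref{prop:expansion-Gd} combined with Theorems~\ref{thm:Psi<0}, \ref{th:psi-bar-theta=0}, \ref{thm:5.1}, \ref{thm:5.3} and Corollary~\ref{Cor:5.2}. Your case distinction by the value of $d_i$ (governing whether $G_i(\r_i)$ and $G_i'(\r_i)$ are finite) and by the sign of $\Psi(\bar\theta)$ is precisely the bookkeeping the paper leaves implicit, and your closing observation---that $\Psi_i(\theta_i)=0$ whenever $G_i'(\r_i)=\infty$ forces $\Psi(\bar\theta)<0$ and hence rules out any inherited exponent from a factor with $d_i\leq 4$---is the key mechanism ensuring the trichotomy is exhaustive.
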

\begin{flushright}$\Box$\end{flushright}

Consider now the multi-factor free product $\Z^{d_1}\ast \ldots \ast \Z^{d_m}$.
Let $\mu_i$ be the simple random walk on $\Z^{d_i}$ for each
$i\in\{1,\dots,m\}$, that is, $\mu_i(\pm e_j^{(i)})=1/(2d_i)$, where $e_j^{(i)}$ is the $j$-th unit vector in $\Z^{d_i}$. Choose $\alpha_1,\dots,\alpha_m>0$ with $\sum_{j=1}^m\alpha_j=1$. Let $G_i(z)$ denote the Green function of the simple
random walk on $\Z^{d_i}$, which has radius of convergence $\r_i=1$, and define $\Psi_i(t)$ analogously as in (\ref{equ:Phi-Psi}). 
Cartwright \cite{cartwright88} computed numerically some of the values of $\Psi_i\bigl(G_i(1)\bigr)$ and
showed that $\Psi_i\bigl(G_i(1)\bigr)\to 1$ if $d_i\to\infty$. Thus, for large
$d_i$ we have $\Psi_i\bigl(G_i(1)\bigr)>1-1/m$. Recall also that
$\Psi_i(t)$ is decreasing. Denote by $G(z)$ the Green
function of the random walk on  $\Z^{d_1}\ast \ldots \ast \Z^{d_m}$ and by $\r$ its
radius of convergence, and define $\Psi(t)$ analogously as in
(\ref{equ:Phi-Psi}). By \cite[Equation 9.21]{woess}, 
$$
\Psi(\bar \theta) = 1+ \sum_{j=1}^m \bigl(\Psi_i(\alpha_i \bar\theta) -1\bigr),
$$
where $\bar\theta=\min_{1\leq i\leq m} \theta_i/\alpha_i$. If all exponents
$d_i\geq 5$ are large enough, we get \mbox{$\Psi(\bar \theta)>0$.} Furthermore, if $\alpha_i$
is chosen large enough, we get an asymptotic behaviour of the form
$C_i\,\r^{-2n}\,n^{-d_i/2}$. Moreover, one can define (symmetric) measures
$\mu_1,\dots,\mu_m$ supported on the natural generators in such a way that we
obtain a $C_0\,\r^{-2n}\,n^{-3/2}$-law: one chooses $\mu_1$ and $\mu_2$ such
that $\Psi_1(\theta_1),\Psi_2(\theta_2)<1/2$, and $\alpha_1$ and
$\alpha_2$ are chosen such that $\bar\theta=\theta_1/\alpha_1=\theta_2/\alpha_2$, yielding 
$$
\Psi(\bar \theta)=
1 + \underbrace{\bigl(\Psi_1(\theta_1)-1\bigr)}_{< -1/2} + \underbrace{ \bigl(\Psi_2(\theta_2)-1\bigr)}_{<-1/2}
+ \underbrace{\sum_{k=3}^m \bigl(\Psi_k(\alpha_k\bar \theta)-1\bigr)}_{\leq 0}
<0;
$$
see also comments at the end of \mbox{Section \ref{sec:RW}.} That is, we can have $m+1$ different asymptotic
behaviours. This finally proves Theorem \ref{thm:1.1}.
\par
For instance, consider $\Gamma=\Z^5\ast \Z^6\ast \Z^7$ equipped with simple random
walks $\mu_1$, $\mu_2$ and $\mu_3$ on each free factor. For $i\in\{1,2,3\}$, we define $\Psi_i(t)$ analogously to (\ref{equ:Phi-Psi}). \mbox{Cartwright \cite{cartwright88}} computed the values
$\Psi_1\bigl(G_1(1)\bigr)=0.691$, $\Psi_2\bigl(G_2(1)\bigr)=0.824$ and $\Psi_{3}\bigl(G_3(1)\bigr)=0.876$. Thus, the random walk on $\Z^5\ast \Z^6$ governed by $\mu_{12}:=\alpha_1^\ast \bar\mu_1 +\alpha_2^\ast\bar\mu_2$, where $\alpha_1^\ast=\alpha_1/(\alpha_1+\alpha_2)$ and $\alpha_2^\ast=\alpha_2/(\alpha_1+\alpha_2)$, satisfies $\Psi(M)\geq 0.515$ with $M:=\min\{\theta_1/\alpha_1^\ast,\theta_2/\alpha_2^\ast\}$. That is, $M=\r_{1,2}G_{1,2}(\r_{1,2})$, where $G_{1,2}(z)$ is the Green function of the random walk on $\Z^5\ast \Z^6$ with radius of convergence $\r_{1,2}$. 
Since all $\Psi_i$-functions are strictly decreasing, we obtain for the random walk on $\Gamma=\Gamma_1\ast\Gamma_2$ with $\Gamma_1=\Z^5\ast\Z^6$ and $\Gamma_2=\Z^7$:
\begin{eqnarray*}
\Psi(\bar\theta) & = & \Psi_{1}\bigl((\alpha_1+\alpha_2) \bar\theta\bigr)+\Psi_{2}(\alpha_3 \bar\theta)-1
\geq 0.515 + 0.876-1 >0.
\end{eqnarray*}
For the simple random walk on $\Gamma$, we have then the asymptotic non-exponential
type $n^{-7/2}$, if $\alpha_1+\alpha_2< M/\bigl(M+G_3(1)\bigr)$. Otherwise, we
have the asymptotic behaviour $n^{-5/2}$, if $M=\theta_1/\alpha_1^\ast$, or
$n^{-3}$, if $M=\theta_2/\alpha_2^\ast\neq \theta_1/\alpha_1^\ast$.

\subsection{$(\Z/m\Z) \ast \Z^d$}

Consider the groups $\Gamma_1=\Z/m\Z$ and $\Gamma_2=\Z^d$ for any
$m,d\in\N$ with $m\geq 2$. Suppose we are given a probability measure $\mu_1$ on $\Gamma_1$
and a probability measure $\mu_2$ on $\Z^d$, which is supported on the natural
generators. Then $G_1(1)=\infty$, and thus we get the following
classification:
$$
\mu^{(n\delta)}(e) \sim
\begin{cases}
C_1\cdot \r^{-n\delta}\cdot  n^{-d/2}, & \textrm{if } \Psi(\bar\theta)>0,\\
C_2\cdot  \r^{-n\delta}\cdot  n^{-3/2}, & \textrm{otherwise}.
\end{cases}
$$
Let us remark that $\Psi(\bar\theta)<0$ if $d\leq 4$: this follows from the
fact $G_2'(\r_2)=\infty$ (see Proposition \ref{prop:expansion-Gd}) and Corollary \ref{Cor:5.2}.

\subsection{$\Pi_q \ast \Z^d$}

Consider the groups $\Gamma_1=\Pi_q:=\ast_{i=1}^q (\Z/2\Z)$ and $\Gamma_2=\Z^d$
for any $q,d\in\N$ with $q\geq 2$. Observe that the Cayley graph of $\Gamma_1$ is the homogeneous tree
of degree $q$.
Suppose we are given probability measures $\mu_1$ on $\Gamma_1$
and $\mu_2$ on $\Z^d$, which are both supported on the natural
generators. If $q=2$ then $G_1(1)=\infty$, and thus we get the same
classification as in the case $(\Z/m\Z) \ast \Z^d$. If $q\geq 3$, then it is
well-known that $G_1(z)$ can be written as
$$
G_1(z) = A(z) +\sqrt{\r_1-z}\, B(z),
$$
where $A(z),B(z)$ are analytic in a neighbourhood of $z=\r_1$ and $B(\r_1)\neq
0$; see e.g. Woess \cite[Equation (4.5)]{woess03}. Therefore, we get the following classification for the associated random
walk on the free product $\Gamma_1\ast\Gamma_2$:
$$
\mu^{(2n)}(e) \sim
\begin{cases}
C_1\cdot  \r^{-2n}\cdot  n^{-d/2}, & \textrm{if } 
\bar\theta=\theta_2/\alpha_2< \theta_1/\alpha_1 \textrm{ and }\Psi(\bar\theta)>0,\\
C_2\cdot  \r^{-2n}\cdot  n^{-3/2}, & \textrm{otherwise}.
\end{cases}
$$
Analogously to the previous example, observe that $d\leq 4$ implies $\Psi(\bar\theta)<0$.

\section{Classification of Phase Transitions}
\label{sec:phase-transition}

Let us return to the case $m=2$, that is, $\Gamma= \Gamma_1\ast \Gamma_2$. We now \emph{fix} the measures $\mu_1$ and $\mu_2$, and investigate the variation of $\Psi(\bar{\theta})$ as a function of the parameter $\alpha_1$.
\begin{Lemma}\label{lemma:Upsilon}
Assume $\bar \theta <\infty$. Then the function $\Upsilon: (0,1)\mapsto \mathbb{R}$ defined by 
$$
\Upsilon(\alpha_1):=\Psi_1(\alpha_1\bar\theta)+\Psi_2\bigl((1-\alpha_1)\bar\theta\bigr)-1
$$ 
is continuous, strictly decreasing in the interval $\bigl (0,\frac{\theta_1}{\theta_1+\theta_2}\bigr ]$ and strictly increasing in the interval $\bigl [\frac{\theta_1}{\theta_1+\theta_2},1\bigr )$. (We set $\frac{c}{c+\infty}:=0$ and $\frac{\infty}{\infty+c}:=1$ for $c\in(0,\infty)$.)
\end{Lemma}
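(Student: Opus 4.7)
The first observation is that although the statement writes $\bar\theta$ as if it were a fixed quantity, in reality $\bar\theta = \min\{\theta_1/\alpha_1,\,\theta_2/(1-\alpha_1)\}$ depends on $\alpha_1$, and the two arguments coincide exactly at $\alpha_1^\ast := \theta_1/(\theta_1+\theta_2)$. So the natural plan is to split the interval $(0,1)$ at $\alpha_1^\ast$ and analyse each piece separately. For $\alpha_1\leq \alpha_1^\ast$ one has $\bar\theta = \theta_2/(1-\alpha_1)$, hence $(1-\alpha_1)\bar\theta = \theta_2$ is constant while $\alpha_1\bar\theta = \alpha_1\theta_2/(1-\alpha_1)$ varies; symmetrically, for $\alpha_1\geq\alpha_1^\ast$ one has $\bar\theta = \theta_1/\alpha_1$, so $\alpha_1\bar\theta = \theta_1$ is constant while $(1-\alpha_1)\bar\theta = \theta_1(1-\alpha_1)/\alpha_1$ varies.

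The key analytic ingredient is that each $\Psi_i$ is strictly decreasing on $[0,\theta_i)$: differentiating $\Psi_i(t) = \Phi_i(t) - t\Phi_i'(t)$ gives $\Psi_i'(t) = -t\Phi_i''(t)$, which is strictly negative for $t>0$ because $\Phi_i$ is strictly convex on $[0,\theta_i)$ (cf.\ the properties of $\Phi_i$ recalled after \eqref{phi_12}). On the piece $(0,\alpha_1^\ast]$, the map $\alpha_1\mapsto \alpha_1\theta_2/(1-\alpha_1)$ is strictly increasing from $0$ to $\theta_1$ (noting $\alpha_1^\ast\theta_2/(1-\alpha_1^\ast)=\theta_1$), so its image lies in the domain of strict monotonicity of $\Psi_1$; composing yields that $\alpha_1\mapsto \Psi_1(\alpha_1\bar\theta)$ is strictly decreasing, while the other summand $\Psi_2((1-\alpha_1)\bar\theta) = \Psi_2(\theta_2)$ is constant. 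Thus $\Upsilon$ is strictly decreasing on $(0,\alpha_1^\ast]$. The symmetric argument on $[\alpha_1^\ast,1)$ shows the strictly increasing behaviour there.

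For continuity on $(0,1)$, continuity inside each of the two pieces is immediate from continuity of $\Psi_1,\Psi_2$ on their open domains and the smoothness of the arguments in $\alpha_1$. At the junction $\alpha_1^\ast$, both formulas yield $\Psi_1(\theta_1) + \Psi_2(\theta_2) - 1$ (using the convention $\Psi_i(\theta_i) = \lim_{t\to\theta_i-}\Psi_i(t)$ already introduced in Section~\ref{sec:RW}), so $\Upsilon$ is continuous there as well.

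The main technical nuisance, rather than a genuine obstacle, is the bookkeeping for the degenerate cases when $\theta_1 = \infty$ or $\theta_2 = \infty$: the conventions $\frac{c}{c+\infty}:=0$ and $\frac{\infty}{\infty+c}:=1$ are precisely designed so that one of the two pieces degenerates to the whole interval $(0,1)$ (with $\bar\theta = \theta_2/(1-\alpha_1)$ or $\bar\theta=\theta_1/\alpha_1$ accordingly), and the monotonicity claim is then just one of the two cases above; one should simply record that the hypothesis $\bar\theta<\infty$ forces at least one of $\theta_1,\theta_2$ finite so the relevant argument of $\Psi_i$ stays in its domain.
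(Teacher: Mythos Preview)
Your proposal is correct and follows essentially the same approach as the paper: split at $\alpha_1^\ast=\theta_1/(\theta_1+\theta_2)$, observe that on each piece one of the arguments $\alpha_i\bar\theta$ is constant while the other is a strictly monotone function of $\alpha_1$, and then invoke the strict decrease of $\Psi_i$ together with the degenerate-case bookkeeping for $\theta_i=\infty$. You even supply a detail the paper omits here (the formula $\Psi_i'(t)=-t\Phi_i''(t)$ justifying strict monotonicity of $\Psi_i$), so nothing is missing.
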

\begin{proof}
We leave the proof of  continuity of $\Upsilon$ as an easy exercise to the
reader, since $\Psi_i$ is analytic in an open neighbourhood of the interval
$[0,\theta_i)$.
\par 
Note that $\Upsilon(\alpha_1)$ equals $\Psi(\bar\theta)$ in dependence of $\alpha_1$. We divide the proof into two parts, according to finiteness of $\theta_1$ and $\theta_2$.\smallbreak
\emph{Case $\theta_1,\theta_2<\infty$}. \ If $0<\alpha_1<\frac{\theta_1}{\theta_1+\theta_2}$ then $\bar\theta = \theta_2/\alpha_2$. Consequently, we have 
$$
\Upsilon(\alpha_1)=\Psi_1\Bigl(\frac{\alpha_1}{1-\alpha_1}\theta_2\Bigr)+\Psi_2(\theta_2)-1.
$$
Since the function $\frac{\alpha_1}{1-\alpha_1}$ is strictly increasing, it follows that $\Psi_1(\frac{\alpha_1}{1-\alpha_1}\theta_2)$ is strictly decreasing, implying $\Upsilon(\alpha_1)$ strictly decreasing.\\
If $\alpha_1=\frac{\theta_1}{\theta_1+\theta_2}$ we obtain $\bar\theta=\theta_1/\alpha_1=\theta_2/\alpha_2$, that is, $\Upsilon(\alpha_1) = \Psi_1(\theta_1)+\Psi_2(\theta_2)-1$.\\
If $\frac{\theta_1}{\theta_1+\theta_2}<\alpha_1<1$ we have $\Psi(\bar\theta)=\Psi_1(\theta_1)+\Psi_2(\frac{1-\alpha_1}{\alpha_1}\theta_1)-1$. Since $\frac{1-\alpha_1}{\alpha_1}$ is strictly decreasing, $\Upsilon(\alpha_1)$ is a strictly increasing function in the abovementioned interval.
\smallbreak
\emph{Case $\theta_1=\infty$}. Then $\bar\theta=\frac{\theta_2}{1-\alpha_1}$. The same reasoning as before shows that $\Upsilon(\alpha_1)$ is strictly decreasing in the interval $(0,1)$.
\smallbreak
\emph{Case $\theta_2=\infty$}. Then $\bar\theta=\frac{\theta_1}{\alpha_1}$. Analogously, $\Upsilon(\alpha_1)$ is strictly increasing in the interval $(0,1)$.
\end{proof}
Let us remark that $\bar\theta=\infty$ implies $\Psi(\bar\theta)<0$ (see \cite[Theorem 9.22]{woess}); otherwise we would have a contradiction to $\rho$-transience.
\smallbreak
Now we can give a complete picture of the \emph{phase transition} of the asymptotic behaviour of the return probabilities depending on the parameter $\alpha_1$, and we present specific examples. 
In the following we discuss the different possible behaviours of the function $\Upsilon(\alpha_1)=\Psi(\bar \theta)$. In Figure \ref{fig:Upsilon}, the dashed line will represent \emph{approximately} the qualitative behaviour of $\Upsilon(\alpha_1)$; we denote its zeros (if they exist) by $\alpha_{\textnormal{low}}$ and $\alpha_{\textnormal{high}}$ (with $\alpha_{\textnormal{low}} \leq \alpha_{\textnormal{high}}$). Moreover, we write $\alpha_{\textnormal{c}}:=\theta_1/(\theta_1+\theta_2)$. We decompose the interval $(0,1)$ into subintervals such that every choice of $\alpha_1$ in a fixed subinterval leads to the same non-exponential type. With the help of Figure \ref{fig:Upsilon} we discuss case by case the different behaviours of $\Upsilon(\alpha_1)$, and for each case we give an example of a nearest neighbour random walk on $\Z^{d_1}\ast \Z^{d_2} $. Recall that $\Psi(0)=\Psi_i(0)=1$.
\begin{figure}[!h]
\begin{minipage}[t]{0.45\linewidth}
\vspace{4ex}
\psset{xunit=0.68cm,yunit=0.68cm,runit=0.68cm}
\begin{flushright}
\begin{pspicture}*(-4,-3)
\begin{pspicture}(9.5,0)
\linethickness{0.5mm}
\parabola[linestyle=dashed](-0.7,1)(1.5,-2.5)
\end{pspicture}
\end{pspicture}
\end{flushright}
\setlength{\unitlength}{0.5cm}
\begin{flushright}
\begin{picture}(8,-1.9)
  \linethickness{0.5mm}
\put(-3, 3){\line(1, 0){10}}
\put(-3, 1.5){\line(0, 1){3.5}}
\put(-3.5,5.2){\small $\Upsilon(\alpha_1)$}
\put(-3.17, 4.51){$\mathbf{\uparrow}$}
\put(7.3,3.3){$\alpha_1$}
\put(-3.5,2.1){$0$}
\put(7.1,2.1){$1$}
\put(-3, 2.75){\line(0, 1){0.5}}
\put(2, 2.75){\line(0, 1){0.5}}
\put(2, 2.2){$\alpha_{\textnormal c}$}
\put(0.4, 2.75){\line(0, 1){0.5}}
\put(3.5, 2.75){\line(0, 1){0.5}}
\put(7, 2.75){\line(0, 1){0.5}}
  \end{picture}
\end{flushright}
\begin{flushright}
\begin{picture}(8,2.5)
  \linethickness{0.5mm}
\put(-3, 3){\line(1, 0){10}}
\put(7.3,3.3){$\alpha_1$}
\put(-3.3,2.1){$0$}
\put(7.1,2.1){$1$}
\put(0.1, 2.1){$\alpha_{\textnormal{low}}$}
\put(3.3, 2.1){$\alpha_{\textnormal{high}}$}
\put(-3, 2.75){\line(0, 1){0.5}}
\put(0.4, 2.75){\line(0, 1){0.5}}
\put(3.5, 2.75){\line(0, 1){0.5}}
\put(0.35, 2.75){\line(1, 0){0.19}}
\put(0.35, 3.25){\line(1, 0){0.19}}
\put(3.55, 2.75){\line(-1, 0){0.19}}
\put(3.55, 3.25){\line(-1, 0){0.19}}
\put(7, 2.75){\line(0, 1){0.5}}
\put(-2.95,3.5){\footnotesize{$n^{\!-\lambda_2}\! \log^{\kappa_2}\! n$}}
\put(1.5,3.5){\footnotesize{$n^{-\frac{3}{2}}$}}
\put(3.75,3.5){\footnotesize{$n^{\!- \lambda_1}\! \log^{\kappa_1}\! n$}}
  \end{picture}
\end{flushright}
\vspace{-6ex}
\begin{center}Type A\end{center}
\end{minipage} \hfill
\begin{minipage}[t]{0.45\linewidth}
\vspace{3ex}
\psset{unit=.35pt}
\begin{pspicture}*(335, 160)
  \psset{linewidth=1pt}
  \pscircle[linestyle=dashed](415, 200){220} 
\end{pspicture}
\begin{flushright}
\setlength{\unitlength}{0.5cm}
\begin{picture}(8,0)
  \linethickness{0.5mm}
\put(-3, 2){\line(1, 0){10}}
\put(-3, 1){\line(0, 1){4.5}}
\put(-3.5,5.8){\small $\Upsilon(\alpha_1)$}
\put(-3.17, 5.01){$\mathbf{\uparrow}$}
\put(7.4,2.3){$\alpha_1$}
\put(-3.5,1.1){$0$}
\put(7.1,1.1){$1$}
\put(-3, 1.75){\line(0, 1){0.5}}
\put(1.17, 1.75){\line(0, 1){0.5}}
\put(7, 1.75){\line(0, 1){0.5}}
  \end{picture}
\end{flushright}
\begin{flushright}
\setlength{\unitlength}{0.5cm}
\begin{picture}(8,2.2)
  \linethickness{0.5mm}
\put(-3, 2){\line(1, 0){10}}
\put(7.4,2.3){$\alpha_1$}
\put(-3.3,1.1){$0$}
\put(7.1,1.1){$1$}
\put(1.17, 1.1){$\alpha_{\textnormal{low}}$}
\put(-3, 1.75){\line(0, 1){0.5}}
\put(1.17, 1.75){\line(0, 1){0.5}}
\put(7, 1.75){\line(0, 1){0.5}}
\put(1.12, 1.75){\line(1, 0){0.19}}
\put(1.12, 2.25){\line(1, 0){0.19}}
\put(3,2.3){\footnotesize{$n^{-\frac{3}{2}}$}}
\put(-2.5,2.3){\footnotesize{$n^{\!- \lambda_2}\! \log^{\kappa_2}\! n$}}
  \end{picture}
\end{flushright}
\vspace{-4ex}
\begin{center}Type B\end{center}
\end{minipage}

\vspace*{1cm}
\begin{minipage}[t]{0.45\linewidth}
\vspace{5ex}
\psset{unit=.35pt}
\begin{pspicture}*(450, 160)
  \psset{linewidth=1pt}
  \pscircle[linestyle=dashed](157.5, 170){234.5} 
\end{pspicture}
\begin{flushright}
\setlength{\unitlength}{0.5cm}
\begin{picture}(8,0)
  \linethickness{0.5mm}
\put(-3, 2){\line(1, 0){10}}
\put(-3, 1){\line(0, 1){3.5}}
\put(-3.5,4.7){\small $\Upsilon(\alpha_1)$}
\put(-3.17, 4.01){$\mathbf{\uparrow}$}
\put(7.4,2.3){$\alpha_1$}
\put(-3.5,1.1){$0$}
\put(7.1,1.1){$1$}
\put(-3, 1.75){\line(0, 1){0.5}}
\put(3.15, 1.75){\line(0, 1){0.5}}
\put(7, 1.75){\line(0, 1){0.5}}
  \end{picture}
\end{flushright}
\begin{flushright}
\setlength{\unitlength}{0.5cm}
\begin{picture}(8,2.2)
  \linethickness{0.5mm}
\put(-3, 2){\line(1, 0){10}}
\put(7.4,2.3){$\alpha_1$}
\put(-3.3,1.1){$0$}
\put(7.1,1.1){$1$}
\put(3.1, 1.1){$\alpha_{\textnormal{high}}$}
\put(-3, 1.75){\line(0, 1){0.5}}
\put(7, 1.75){\line(0, 1){0.5}}
\put(3.21, 1.75){\line(-1, 0){0.19}}
\put(3.21, 2.25){\line(-1, 0){0.19}}
\put(3.15, 1.75){\line(0, 1){0.5}}
\put(-1,2.3){\footnotesize{$n^{-\frac{3}{2}}$}}
\put(3.65,2.3){\footnotesize{$n^{\!- \lambda_1}\! \log^{\kappa_1}\! n$}}
  \end{picture}
\end{flushright}
\vspace{-3ex}
\begin{center}Type C\end{center}
\end{minipage}\hfill
\begin{minipage}[t]{0.45\linewidth}
\vspace{6ex}
\psset{xunit=0.68cm,yunit=0.68cm,runit=0.68cm}
\begin{flushright}
\begin{pspicture}*(-1,-1.4)
\begin{pspicture}(7.9,0)
\linethickness{0.5mm}
\parabola[linestyle=dashed](1,1.3)(3.5,-1.3)
\end{pspicture}
\end{pspicture}
\end{flushright}
\setlength{\unitlength}{0.5cm}
\begin{flushright}
\begin{picture}(8,1.8)
  \linethickness{0.5mm}
\put(-3, 1){\line(1, 0){10}}
\put(-3, 0){\line(0, 1){3.4}}
\put(-3.5,3.8){\small $\Upsilon(\alpha_1)$}
\put(-3.17, 2.91){$\mathbf{\uparrow}$}
\put(7.3,1.25){$\alpha_1$}
\put(2, 0.75){\line(0, 1){0.5}}
\put(-3.5,0.1){$0$}
\put(7.1,0.1){$1$}
\put(-3, 0.75){\line(0, 1){0.5}}
\put(7, 0.75){\line(0, 1){0.5}}
  \end{picture}
\end{flushright}
\begin{flushright}
\begin{picture}(8,2)
  \linethickness{0.5mm}
\put(-3, 1){\line(1, 0){10}}
\put(7.3,1.25){$\alpha_1$}
\put(-3.3,0.1){$0$}
\put(7.1,0.1){$1$}
\put(2, 0.1){$\alpha_{\textnormal{c}}$}
\put(-3, 0.75){\line(0, 1){0.5}}
\put(2, 0.75){\line(0, 1){0.5}}
\put(7, 0.75){\line(0, 1){0.5}}
\put(1.95, 0.75){\line(1, 0){0.19}}
\put(1.95, 1.25){\line(1, 0){0.19}}
\put(-2.15,1.3){\footnotesize{$n^{\!- \lambda_2}\! \log^{\kappa_2}\! n$}}
\put(3.25,1.3){\footnotesize{$n^{\!- \lambda_1}\! \log^{\kappa_1}\! n$}}
  \end{picture}
\end{flushright}
\vspace{-1ex}
\begin{center}Type D\end{center}
\end{minipage}

\vspace*{1cm}
\begin{minipage}[t]{0.45\linewidth}
\vspace*{8ex}
\psset{xunit=0.68cm,yunit=0.68cm,runit=0.68cm}
\begin{flushright}
\begin{pspicture}(9.5,0)
\linethickness{0.5mm}
\parabola[linestyle=dashed](1.6,-1.15)(5.1,-2.15)
\end{pspicture}
\end{flushright}
\setlength{\unitlength}{0.5cm}
\begin{flushright}
\begin{picture}(8,1.9)
  \linethickness{0.5mm}
\put(-3, 1){\line(1, 0){10}}
\put(-3, -0.7){\line(0, 1){3.2}}
\put(-3.5,3){\small $\Upsilon(\alpha_1)$}
\put(-3.17, 2.01){$\mathbf{\uparrow}$}
\put(7.3,1.25){$\alpha_1$}
\put(-3.5,0.1){$0$}
\put(7.1,0.1){$1$}
\put(-3, 0.75){\line(0, 1){0.5}}
\put(7, 0.75){\line(0, 1){0.5}}
  \end{picture}
\end{flushright}
\begin{flushright}
\begin{picture}(8,2.8)
  \linethickness{0.5mm}
\put(-3, 1){\line(1, 0){10}}
\put(7.3,1.25){$\alpha_1$}
\put(-3.3,0.1){$0$}
\put(7.1,0.1){$1$}
\put(-3, 0.75){\line(0, 1){0.5}}
\put(7, 0.75){\line(0, 1){0.5}}
\put(1.5,1.3){\footnotesize{$n^{-\frac{3}{2}}$}}
  \end{picture}
\end{flushright}
\vspace{-2ex}
\begin{center}Type E\end{center}
\end{minipage} \hfill
\begin{minipage}[t]{0.45\linewidth}
\vspace*{6ex}
\psset{xunit=0.68cm,yunit=0.68cm,runit=0.68cm}
\begin{flushright}
\begin{pspicture}*(-4,-3)
\begin{pspicture}(9.5,0)
\linethickness{0.5mm}
\parabola[linestyle=dashed](-0.7,1)(1.5,-1.53)
\end{pspicture}
\end{pspicture}
\end{flushright}
\setlength{\unitlength}{0.5cm}
\begin{flushright}
\begin{picture}(8,-1.9)
  \linethickness{0.5mm}
\put(-3, 3){\line(1, 0){10}}
\put(-3, 1.5){\line(0, 1){3.5}}
\put(-3.5,5.2){\small $\Upsilon(\alpha_1)$}
\put(-3.17, 4.51){$\mathbf{\uparrow}$}
\put(7.3,3.3){$\alpha_1$}
\put(-3.5,2.1){$0$}
\put(7.1,2.1){$1$}
\put(-3, 2.75){\line(0, 1){0.5}}
\put(2, 2.75){\line(0, 1){0.5}}
\put(2, 2.25){$\alpha_{\textnormal c}$}
\put(7, 2.75){\line(0, 1){0.5}}
  \end{picture}
\end{flushright}
\begin{flushright}
\begin{picture}(8,2.5)
  \linethickness{0.5mm}
\put(-3, 3){\line(1, 0){10}}
\put(7.3,3.3){$\alpha_1$}
\put(-3.3,2.1){$0$}
\put(7.1,2.1){$1$}
\put(2, 2.25){$\alpha_{\textnormal c}$}
\put(-3, 2.75){\line(0, 1){0.5}}
\put(2, 2.75){\line(0, 1){0.5}}
\put(7, 2.75){\line(0, 1){0.5}}
\put(-2.65,3.5){\footnotesize{$n^{\!-\lambda_2}\! \log^{\kappa_2}\! n$}}
\put(1.5,4){\footnotesize{$n^{-\frac{3}{2}}$}}
\put(1.83,3.5){\footnotesize{$\mathbf{\downarrow}$}}
\put(3.45,3.5){\footnotesize{$n^{\!- \lambda_1}\! \log^{\kappa_1}\! n$}}
  \end{picture}
\end{flushright}
\vspace{-6ex}
\begin{center}Case F\end{center}
\end{minipage}
\caption{The different behaviours of $\Upsilon: \alpha_1\mapsto \Psi(\bar\theta)$.}
\label{fig:Upsilon}
\end{figure}
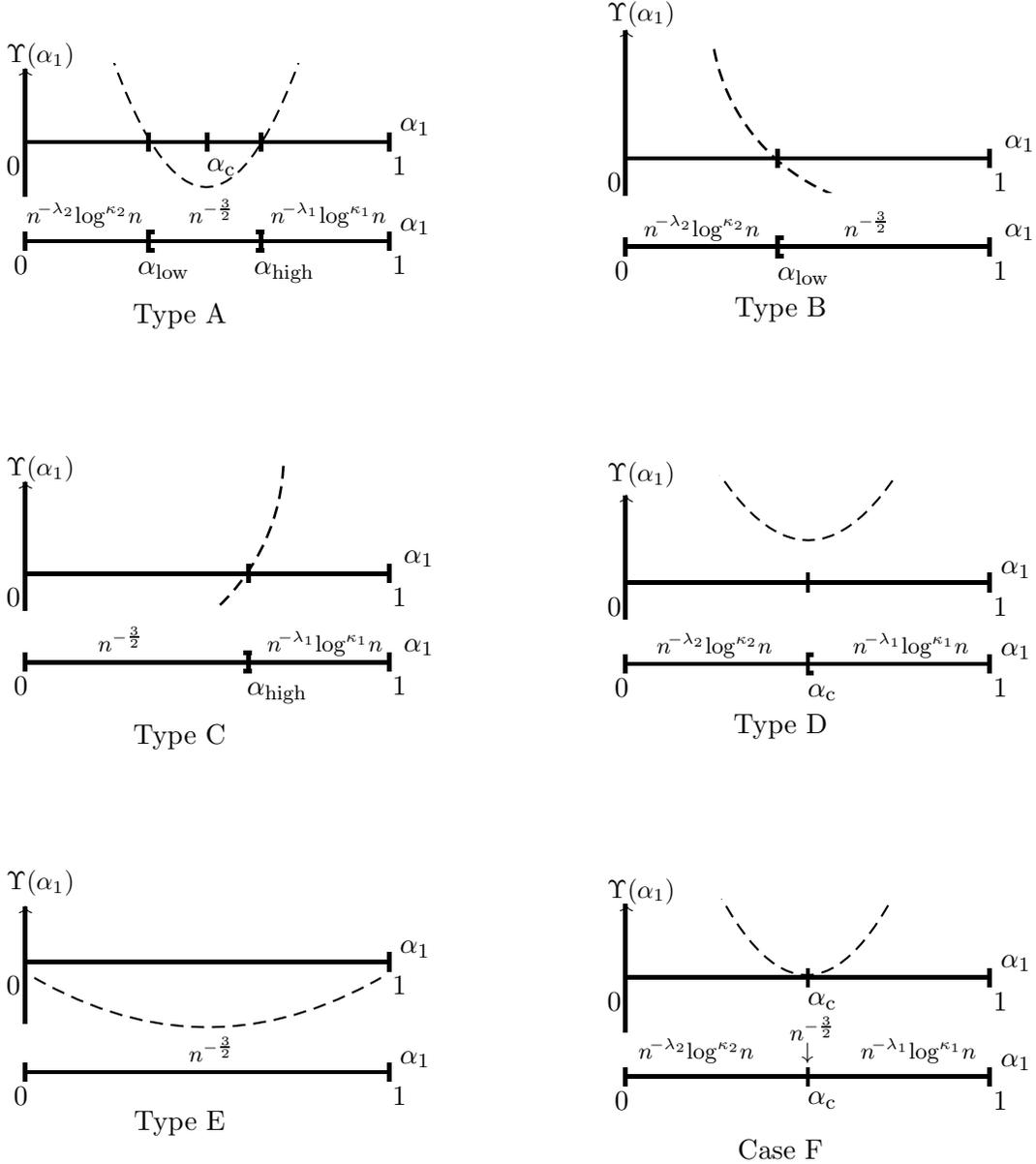

\begin{itemize}
\item[Case A:] Consider Figure \ref{fig:Upsilon}, Case A. We give an example such that this case holds. We set $\Gamma=\Z^{d_1}\ast\Z^{d_2}$ with $d_1,d_2\geq 5$, and we choose $\mu_1$ and $\mu_2$ such that $\Psi_1(\theta_1)<1/2$ and $\Psi_2(\theta_2)<1/2$. Recall that it is  possible to find such measures (see end of Section \ref{sec:RW} and \cite[Lemma 17.9]{woess}). We remark that $\Psi_i(\theta_i)>0$: indeed, $\Psi_i(\theta_i)=0$ would imply 
$$
\Phi_i'(\theta_i)=\frac{\Phi_i(\theta_i)}{\theta_i}=\frac{G_i(\r_i)}{\r_iG_i(\r_i)}=\frac{1}{\r_i}.
$$
Differentiating (\ref{phi_12}) would yield $G_i'(\r_i)=\infty$, a contradiction
to Proposition \ref{prop:expansion-Gd}, according to which, $G_i'(\r_i)$ must
be finite due to $d_i\geq 5 $. Therefore:
\begin{itemize}
 \item If $\alpha_1$ is small then $\bar\theta=\theta_2/(1-\alpha_1)$ and
\begin{equation}\label{equ:typeA}
\Psi(\bar\theta)=\underbrace{\Psi_1\Bigl(\underbrace{\alpha_1 \frac{\theta_2}{1-\alpha_1}}_{\xrightarrow{\alpha_1\to 0} 0}\Bigr)}_{\xrightarrow{\alpha_1\to 0} 1}+ \underbrace{\Psi_2(\theta_2)}_{>0}-1,
\end{equation}
that is, $\Psi(\bar\theta)>0$ if $\alpha_1$ is sufficiently small. This yields a $n^{-d_2/2}$-law for small values of $\alpha_1$. 
\item If $\alpha_1$ is close to $1$ then $\bar\theta=\theta_1/\alpha_1$ and we get analogously an $n^{-d_1/2}$-law.
\item For $\alpha_1=\alpha_c$, we get $\Psi(\bar\theta)=\Psi_1(\theta_1)+\Psi_2(\theta_2)-1<0$, that is, we have a $n^{-3/2}$-law in this case.
\end{itemize}
\item[Case B:] We set $\Gamma=\Z^{2}\ast\Z^{7}$. By Lemma \ref{lemma:Upsilon}, $\Upsilon(\alpha_1)$ is strictly decreasing and $\bar\theta=\theta_2/\alpha_2$.
\begin{itemize}
\item If $\alpha_1$ is small then the same reasoning as in (\ref{equ:typeA}) holds and  $\Psi(\bar\theta)>0$, that is, we have a $n^{-d_2/2}$-law for small $\alpha_1$.
\item If $\alpha_1$ is close to $1$ then
$$
\Psi(\bar\theta)=\underbrace{\Psi_1\Bigl(\underbrace{\alpha_1 \frac{\theta_2}{1-\alpha_1}}_{\xrightarrow{\alpha_1\to 1} \infty}\Bigr)}_{\xrightarrow{\alpha_1\to 1} 0}+ \underbrace{\Psi_2(\theta_2)}_{<1}-1<0,
$$
since $\lim_{t\to\infty}\Psi_1(t)=0$, which follows analogously to (\ref{equ:psi1=0}). That is, we have a $n^{-3/2}$-law for large $\alpha_1$.
\end{itemize}
\item[Case C:] By setting $\Gamma=\Z^{7}\ast\Z^{2}$, we have the symmetric situation as in Case B, which gives an example for this case by exchanging the roles of $\Z^2$ and $\Z^7$.
\item[Case D:] We set $\Gamma=\Z^{5}\ast\Z^{6}$ and consider simple random walks on the factors $\Z^5 $ and $\Z^6 $. By Cartwright \cite{cartwright88}, we have $\Psi_1(\theta_1)=0.691$ and $\Psi_2(\theta_2)=0.824$. Since $\Psi_1(z)$ and $\Psi_2(z)$ are strictly decreasing, we have $\Upsilon(\alpha_1)\geq \Psi_1(\theta_1)+\Psi_2(\theta_2)-1>0$ for all $\alpha_1\in(0,1)$. Thus, we obtain a $n^{-5/2}$-law, if $\alpha_1 \geq\alpha_c$, and a $n^{-3}$-law, if $\alpha_1< \alpha_c$.
\item[Case E:] We set $\Gamma=\Z^{3}\ast\Z^{4}$. By Equation (\ref{equ:psi1=0}), follows that $\Psi_1(\alpha_1\bar\theta)=0$ or $\Psi_2(\alpha_2\bar\theta)=0$, that is, we have $\Upsilon(\alpha_1)<0$ for all $\alpha_1\in (0,1)$. This yields a $n^{-3/2}$-law for all $\alpha_1\in (0,1)$.
\end{itemize}
We now give an example (see Case F of Figure \ref{fig:Upsilon}) where the $n^{-3/2}$-interval of case A collapses to a
singleton. For this purpose, we have to prove the following:
\begin{Lemma}
Consider $\Gamma=\Z^5\ast \Z^6$. Then there are probability measures $\mu_1 $ and $\mu_2 $ supported on the natural generators of $\Z^5 $ and $\Z^6 $ respectively such that \mbox{$\Psi_1(\theta_1)= \Psi_2(\theta_2)=\frac{1}{2}$.}
\end{Lemma}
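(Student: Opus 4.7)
The two required equalities concern the free factors $\Z^5$ and $\Z^6$ separately, so it suffices to construct $\mu_1$ and $\mu_2$ independently; the argument is identical in both cases. Fix $d \in \{5,6\}$. The plan is to embed the simple random walk on $\Z^d$ in a one-parameter family of symmetric nearest-neighbour walks that interpolates between the simple random walk and an essentially one-dimensional (hence recurrent) walk, and to apply the intermediate value theorem to $\Psi_d(\theta_d)$ along this family. Concretely, for $p \in \bigl(0, 1/(2(d-1))\bigr)$, let
\begin{equation*}
\pi_d^{(p)}(\pm e_1) := \tfrac{1}{2}\bigl(1-2(d-1)p\bigr), \qquad \pi_d^{(p)}(\pm e_j) := p \ \text{ for } j = 2, \dots, d.
\end{equation*}
Each $\pi_d^{(p)}$ is symmetric and supported on all natural generators, and at $p = 1/(2d)$ it recovers the simple random walk. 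Symmetry gives $\varrho^{(p)} = 1$, hence $\r^{(p)} = 1$. By Proposition~\ref{prop:expansion-Gd} together with $d \geq 5$, we have $(G_d^{(p)})'(1) < \infty$. Writing $U_d^{(p)}(z) = 1 - 1/G_d^{(p)}(z)$ and substituting into (\ref{equ:psi1=0}) yields the compact identity
\begin{equation*}
\Psi_d^{(p)}\bigl(\theta_d^{(p)}\bigr) \;=\; \frac{G_d^{(p)}(1)^2}{G_d^{(p)}(1) + (G_d^{(p)})'(1)}.
\end{equation*}

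The argument will proceed through three ingredients. \emph{Upper endpoint:} at $p = 1/(2d)$, Cartwright \cite{cartwright88} gives $\Psi_5(\theta_5) = 0.691$ and $\Psi_6(\theta_6) = 0.824$, both strictly larger than $1/2$. \emph{Continuity:} The integral representation (\ref{equ:green-integral})--(\ref{equ:density-expansion}) exhibits $G_d^{(p)}(1)$ and $(G_d^{(p)})'(1)$ as integrals whose integrands depend continuously on $p$ (the densities $\hat f_j^{(p)}$ depend continuously on $\beta_j = 2p$ for $j\geq 2$ and on $\beta_1 = 1 - 2(d-1)p$) and which converge uniformly on compact subintervals of $\bigl(0, 1/(2(d-1))\bigr)$; hence $p \mapsto \Psi_d^{(p)}(\theta_d^{(p)})$ is continuous. \emph{Lower endpoint:} as $p \to 0^+$, the walk concentrates on the $e_1$-axis and its projection to this axis tends to the simple symmetric recurrent walk on $\Z$. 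I will show that $\Psi_d^{(p)}(\theta_d^{(p)}) \to 0$. Since $\hat f_j^{(p)}$ for $j \geq 2$ is supported in $[-2p,2p]$ while $\hat f_1^{(p)}(t) \to (\pi\sqrt{1-t^2})^{-1}$, the convolution $\hat f_1^{(p)}\ast \cdots \ast \hat f_d^{(p)}$ retains an integrable square-root singularity at $t = 1$, smoothed on scale $p$. Feeding this into (\ref{equ:green-integral}) gives the growth rates $G_d^{(p)}(1) \asymp p^{-1/2}$ and $(G_d^{(p)})'(1) \asymp p^{-3/2}$ for $d = 5$, with analogous, mildly logarithmically corrected estimates for $d = 6$; in particular $(G_d^{(p)})'(1)/G_d^{(p)}(1)^2 \to \infty$, so $\Psi_d^{(p)}(\theta_d^{(p)}) \to 0$.

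Combining these ingredients, the intermediate value theorem supplies $p_1 \in (0, 1/10)$ and $p_2 \in (0, 1/12)$ with $\Psi_5^{(p_1)}(\theta_5^{(p_1)}) = \Psi_6^{(p_2)}(\theta_6^{(p_2)}) = 1/2$; setting $\mu_1 := \pi_5^{(p_1)}$ and $\mu_2 := \pi_6^{(p_2)}$ completes the proof. The main obstacle is the lower-endpoint analysis: one needs quantitative, two-sided control of the convolution $\hat f_1^{(p)}\ast \cdots \ast \hat f_d^{(p)}$ near its singularity at $t = 1$, uniformly in small $p$. An alternative, avoiding explicit asymptotics, is to use (\ref{equ:psi1=0}) in the form $\Psi_d^{(p)}(\theta_d^{(p)}) = 0 \Leftrightarrow (U_d^{(p)})'(1)/G_d^{(p)}(1)^2 \to \infty$ and to argue that the requisite blow-up as $p \to 0$ is inherited from the recurrent one-dimensional limit along the $e_1$-axis.
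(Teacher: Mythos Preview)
Your approach is essentially the same as the paper's: the same one-parameter family of symmetric nearest-neighbour walks (weight concentrated on $\pm e_1$, uniform on the remaining generators), Cartwright's numerics at the simple-random-walk endpoint, and the intermediate value theorem. The only substantive difference lies at the lower endpoint. You aim for the stronger conclusion $\Psi_d^{(p)}(\theta_d^{(p)})\to 0$ as $p\to 0$ and propose to extract it from two-sided asymptotics of $G_d^{(p)}(1)$ and $(G_d^{(p)})'(1)$ via the convolution density near $t=1$; you correctly flag this as the main obstacle, and indeed carrying it out rigorously (uniform control of the smoothed square-root singularity) is nontrivial. The paper bypasses this entirely by invoking the proof of \cite[Lemma~17.9]{woess}, which already shows that for this very family with $\delta$ small one has $\Psi_i(\theta_i)<1/2$. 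That single citation replaces all of your lower-endpoint analysis.

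For continuity the paper also argues more lightly: it uses the formula $\Psi_i(\theta_i)=\bigl(U_i'(1)+1-U_i(1)\bigr)^{-1}$ and observes that $U_i(\delta\mid 1)$ (and hence $U_i'(\delta\mid 1)$) depends analytically on $\delta$, rather than going through the integral representation. Both routes are fine, but the paper's is shorter. In summary, your plan is correct; the paper's proof is the same strategy executed with two shortcuts you did not take.
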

\begin{proof}
Let $i\in \{1,2 \} $. We have $d_1=5, d_2=6 $ and choose any $\delta\in(0,1)$. We define
$$
\nu_\delta^{(i)}(x) :=
\begin{cases}
(1-\delta)/2, & \textrm{if } x=(\pm1,0,\dots,0)\in\Z^{d_i},\\
\frac{\delta}{2d_i-2}, & \textrm{if }
x=(0,\dots,0,\pm1,0,\dots,0)\in\Z^{d_i} \setminus\{(\pm 1,0,\dots,0)\}.
\end{cases}
$$
The Green function associated with the random walk on $\Z^{d_i}$ governed by
the symmetric measure $\nu_\delta^{(i)}$ has radius of convergence $\r_i=1$; see
\cite[Cor. 8.15]{woess}. If
$\delta=1-1/{d_i}$ then $\Psi_1(\theta_1)=0.691>1/2$ and
$\Psi_2(\theta_2)=0.824>1/2$; see Cartwright \cite{cartwright88}. On
the other hand side, if $\delta$ is small enough then $\Psi_1(\theta_1)<1/2$ and
$\Psi_2(\theta_2)<1/2$; see proof of \cite[Lemma 17.9]{woess}. It remains
to show that $\Psi_i(\theta_i)$ varies continuously in dependence of $\delta$,
which implies that there is some $\delta_0^{(i)}$ such that $\Psi_i(\theta_i)=1/2$. We now write $G_i(z)=G_i(\delta|z)$, $U_i(z)=U_i(\delta|z)$ and
$\Psi_i(t)=\Psi_i(\delta|t)$. Recall that
$$
\Psi_i(\delta|\theta_i)=\frac{1}{U_i'(\delta|1)+1-U_i(\delta|1)}.
$$
Since $U_i(\delta|1)$ can be rewritten as a power series in the variable
$\delta$, the function $\delta \mapsto \Psi_i(\delta|\theta_i)$ is continuous in
$\delta$. This finishes the proof.
\end{proof}
We can now present an example, where Case F of Figure \ref{fig:Upsilon} holds:
we set $\Gamma=\Z^5\ast \Z^6$ and choose the measures $\mu_1$ and $\mu_2$ such
that $\Psi_1(\theta_1)=\Psi_2(\theta_2)=1/2$. Obviously, we have then
$\Upsilon(\alpha_c)=\Psi_1(\theta_1)+\Psi_2(\theta_2)-1=0$. That is, we get the
following asymptotic behaviour:
$$
\mu^{(2n)}(e) \sim 
\begin{cases}
C_1 \cdot \r^{-2n} \cdot n^{-5/2}, & \textrm{if } \alpha_1>\alpha_c,\\
C_2 \cdot \r^{-2n} \cdot n^{-3/2}, & \textrm{if } \alpha_1=\alpha_c,\\
C_3\cdot \r^{-2n} \cdot n^{-3}, & \textrm{if } \alpha_1<\alpha_c.
\end{cases}
$$
As a final remark let us explain that it is not possible that
$\Upsilon(\alpha_1)$ is strictly increasing or decreasing with
$\Upsilon(\alpha_1)>0$ for all $\alpha_1\in(0,1)$. In order to show this assume that $\Upsilon(\alpha_1)$ is strictly increasing. Then, by Lemma \ref{lemma:Upsilon}, $\theta_2=\infty$ must hold, that is, $G_2(\r_2)=\infty$. The same reasoning as in Equation (\ref{equ:psi1=0}) leads to $\lim_{z\to \r_2}\Psi_2\bigl(zG(z)\bigr)=\lim_{t\to \infty}\Psi_2 (t)=0$. Therefore, we obtain for $\alpha_1$ small enough
$$
\Psi(\bar\theta)=\underbrace{\Psi_1(\theta_1)}_{<1}+
\underbrace{\Psi_2\Bigl(\underbrace{(1-\alpha_1) \frac{\theta_1}{\alpha_1}}_{\xrightarrow{\alpha_1\to 0} \infty}\Bigr)}_{\xrightarrow{\alpha_1\to 0} 0}-1<0.
$$
Analogously, if $\Upsilon (\alpha_1) $ is strictly decreasing, then it must have a zero.

\section{Higher Asymptotic Orders}
\label{sec:remarks}

The techniques we used for determining the asymptotic behaviour give us not only
the leading term $n^{-\lambda}\log^{\kappa} n$, but also the proceeding terms of
\textit{higher order}, according to the singular terms in the expansion following the leading one. For
instance, consider a nearest neighbour random walk on $\Z^{7}\ast\Z^8$ with
$\alpha_1=\theta_1/(\theta_1+\theta_2)$. Then the associated Green function has
the following expansion:
\begin{eqnarray*}
&&\sum_{k=0}^4 g_k (\r-z)^4+ \hat g_1 (\r-z)^{5/2} + \check g_1
(\r-z)^3\log(\r-z)  \\
&&\quad + \hat g_2 (\r-z)^{7/2} + \check g_2
(\r-z)^4\log(\r-z) +\o\bigl((\r-z)^4\bigr),
\end{eqnarray*}
where $\hat g_1 \neq 0$. That is,
$$
\mu^{(2n)}(e) \sim  \r^{-2n}\cdot \bigl(C_1\, n^{-7/2} + C_2\, n^{-4} + C_3\,
n^{-9/2} + C_4\, n^{-5} + \o(n^{-5})\bigr),
$$
where $C_1\neq 0$.

\section*{Acknowledgements}

The authors are grateful to the Research and Technology Office at TU Graz,
NAWI-Graz, the German Research Foundation (DFG) grant GI 746/1-1 and the
Austrian Academy of Science (\"OAW), as well as to the European Science
Foundation (ESF) for the activity ``Random Geometry of Large Interacting Systems and Statistical Physics'', for supporting this project.

\bibliographystyle{abbrv}
\bibliography{literatur}

\end{document}